\newcommand{\conv}{\operatorname{conv}}
\newcommand{\ext}{\operatorname{ext}}
\newcommand{\dist}{\operatorname{dist}}
\newcommand{\merged}{\operatorname{merged}}
\newcommand{\flag}{\operatorname{flag}}
\newcommand{\set}{\operatorname{set}}
\newcommand{\relint}{\operatorname{relint}}
\newcommand{\True}{\operatorname{True}}
\newcommand{\False}{\operatorname{False}}
\newcommand{\pre}{\operatorname{pre}}
\newcommand{\bc}{\operatorname{bc}}
\newcommand{\Proj}{\operatorname{Proj}}
\newcommand{\select}{\operatorname{select}}
\newcommand{\disjoint}{\operatorname{disjoint}}
\newcommand{\PP}{\operatorname{P}}
\newcommand{\NP}{\operatorname{NP}}
\newcommand{\CDC}{\operatorname{CDC}}
\newcommand{\MID}{\operatorname{mid}}
\newcommand{\SOS}{\operatorname{SOS}}
\newcommand{\diameter}{\operatorname{diameter}}
\tikzstyle{vertex}=[circle, draw, inner sep=0pt, minimum size=1.5em]
\tikzstyle{svertex}=[draw, inner sep=0pt, minimum size=1.5em]
\newcommand{\svertex}{\node[svertex]}
\begin{document}

\title{Modeling Combinatorial Disjunctive Constraints via Junction Trees
}


\author{Bochuan Lyu \and Illya V. Hicks \and Joey Huchette}


\institute{Bochuan Lyu, Corresponding author \at
              Department of Computational and Applied Mathematics, Rice University\\
              Houston, TX, USA \\
              \email{bl46@rice.edu}           
            \and
            Illya V. Hicks \at
            Department of Computational and Applied Mathematics, Rice University \\
              Houston, TX, USA \\
              \email{ivhicks@rice.edu}
            \and
           Joey Huchette \at
            Google Research \\
            Mountain View, CA, USA\\
            \email{jhuchette@google.com}
}

\date{Received: date / Accepted: date}

\maketitle


\begin{abstract}
We introduce techniques to build small ideal mixed-integer programming (MIP) formulations of combinatorial disjunctive constraints (CDCs) via the independent branching scheme. We present a novel pairwise IB-representable class of CDCs, CDCs admitting junction trees, and provide a combinatorial procedure to build MIP formulations for those constraints. Generalized special ordered sets ($\SOS k$) can be modeled by CDCs admitting junction trees and we also obtain MIP formulations of $\SOS k$. Furthermore, we provide a novel ideal extended formulation of any combinatorial disjunctive constraints with fewer auxiliary binary variables with an application in planar obstacle avoidance.
\keywords{Combinatorial Disjunctive Constraints \and Junction Trees}
\subclass{90C11}
\end{abstract}

\section{Introduction}

The study of disjunctive constraints originated in the late 1970s from Balas~\cite{balas1975disjunctive,balas1979disjunctive,balas1998disjunctive}, where he obtained extended formulations for general disjunctive programming, studied cutting plane methods, and characterized the convex hull of feasible points. Thereafter, modeling disjunctive constraints~\cite{balas1979disjunctive} has become an important topic in mixed-integer programming (MIP) with numerous applications, such as chemical engineering~\cite{biegler1997systematic,karuppiah2006global,raman1994modelling}, robotics~\cite{dai2019global,deits2014footstep,kuindersma2016optimization}, portfolio optimization~\cite{bertsimas2009algorithm,chang2000heuristics,vielma2008lifted}, and scheduling~\cite{balas1969machine,pinto1995continuous}. A disjunctive constraint has the form of 
\begin{align} \label{eq:dc_poly}
    x \in \bigcup_{i=1}^d P^i,
\end{align} 
\noindent where each $P^i$ is a polyhedron. In our work, we focus on the case where each is bounded, i.e. a polytope. Then, each $P^i$ can also be expressed as the convex combination of the finite set of its extreme points $V^i$ by the Minkowski-Weyl Theorem~\cite{minkowski1897allgemeine,weyl1934elementare}: 
\begin{align} \label{eq:poly_convex}
    P^i = \conv(V^i) := \left\{\sum_{v \in V^i} \lambda_v v: \sum_{v \in V^i} \lambda_v = 1, \lambda \geq 0 \right\}.
\end{align}

By only keeping the combinatorial structure in the disjunctive constraint, we can model the continuous variables $\lambda$ on a collection of indices $\mathcal{S}$: $\mathcal{S} := \{S^i\}_{i=1}^d$ where each extreme points has a unique index and each $S^i$ contains all indices of extreme points of $P^i$. A \textit{combinatorial disjunctive constraint} (CDC) represented by the sets $\mathcal{S}$ is 
\begin{align} \label{eq:cdc}
    \lambda \in \CDC(\mathcal{S}) := \bigcup_{S \in \mathcal{S}} Q(S),
\end{align}

\noindent where $Q(S) := \{\lambda \in \mathbb{R}^J: \sum_{v \in J} \lambda = 1,  \lambda_{J \setminus S} = 0, \lambda \geq 0\}$ and $J := \cup_{S \in \mathcal{S}} S$. The collection of index sets $\mathcal{S}$ is \textit{irredundant} if any two distinct sets in $\mathcal{S}$ are not subsets of each other. 

There are several different approaches to model~\eqref{eq:cdc}. We say a MIP formulation for $\CDC(\mathcal{S})$ is \textit{extended} if it requires auxiliary continuous variables other than $\lambda$. Otherwise, it is \textit{non-extended}. Furthermore, a MIP formulation is \textit{ideal} if each extreme point of its linear programming (LP) relaxation also satisfies the integrality conditions in the MIP formulation. An example of non-extended MIP formulation for $\CDC(\mathcal{S})$~\cite{huchette2019combinatorial,vielma2015mixed} is
\begin{subequations} \label{form:cdc_nonideal}
\begin{alignat}{2}
    & \lambda_v \leq \sum_{S \in \mathcal{S}: v \in S} z_S, & \forall v \in J \\
    &\sum_{S \in \mathcal{S}} z_S = 1, & z \{0, 1\}^{\mathcal{S}} \\
    &\sum_{v \in J} \lambda = 1, & \lambda \geq 0. \\
\end{alignat}
\end{subequations}

However, it is not necessarily ideal. Another existing formulation for combinatorial disjunctive constraints is written down explicitly by Huchette and Vielma~\cite{huchette2017nonconvex} based on the idea of Jeroslow and Lowe~\cite{jeroslow1984modelling}. A MIP formulation for $\CDC(\mathcal{S})$ can be written as

\begin{subequations} \label{form:cdc_jl}
\begin{alignat}{2}
    &\lambda_v = \sum_{S \in \mathcal{S}: v \in S} \gamma_v^{S}, & \forall v \in J \\
    &z_S = \sum_{v \in S} \gamma_v^S, & \forall S \in \mathcal{S} \\
    &\sum_{S \in \mathcal{S}} z_S = 1 & \\
    &\gamma^S \in \Delta^S, & \forall S \in \mathcal{S} \\
    &z \in \{0, 1\}^{\mathcal{S}}.
\end{alignat}
\end{subequations}

It is ideal, but it also requires $O(|\mathcal{S}| |J|)$ auxiliary continuous variables besides $O(|\mathcal{S}|)$ binary variables. As pointed out by Huchette and Vielma~\cite{huchette2019combinatorial}, another ideal MIP formulation with fewer auxiliary binary variables can be obtained~\cite[Proposition 9.3]{vielma2018embedding}:
\begin{subequations} \label{form:cdc_vielma}
\begin{alignat}{2}
    & \lambda_v = \sum_{S \in \mathcal{S}: v \in S} \gamma^S_v, & \forall v \in J \\
    & \sum_{S \in \mathcal{S}} \sum_{v \in S} \gamma_v^S = 1 & \\
    & \sum_{S \in \mathcal{S}} \sum_{v \in S} h^S \gamma_v^S = z & \\
    & \gamma^S \geq 0, & \forall S \in \mathcal{S} \\
    & z \in \{0, 1\}^r,
\end{alignat}
\end{subequations}

\noindent where $\{h^S\}_{S \in \mathcal{S}} \subseteq \{0, 1\}^r$ are distinct binary vectors so the minimum value of $r$ can be $\lceil \log_2(|\mathcal{S}|) \rceil$. 

Another generic approach to build MIP formulations for \eqref{eq:cdc} is the \textit{independent branching} (IB) scheme framework introduced by Vielma and Nemhauser~\cite{vielma2011modeling} and generalized by Huchette and Vielma~\cite{huchette2019combinatorial}. In this framework, we rewrite \eqref{eq:cdc} as $t$ intersections of $k$ alternatives each:
\begin{align} \label{eq:k_IB}
    \CDC(\mathcal{S}) = \bigcap_{j=1}^t \left( \bigcup_{i=1}^k Q(L^j_i) \right).
\end{align}

When~\eqref{eq:cdc} is \textit{pairwise IB-representable}, i.e. it can be rewritten as the form in~\eqref{eq:k_IB} with $k=2$, Vielma and Nemhauser~\cite{vielma2011modeling} provide a ideal, non-extended formulation for~\eqref{eq:cdc}. This $k=2$ IB-scheme framework can provide more computationally efficient MIP formulations of pairwise IB-representable CDC in~\eqref{eq:cdc} than either~\eqref{form:cdc_nonideal} and~\eqref{form:cdc_jl}. The next natural question is how to build ideal and non-extended MIP formulations where the numbers of binary variables and constraints can be minimized. Huchette and Vielma~\cite{huchette2019combinatorial} show that both quatities are $O(t)$ with the depth $t$ in~\eqref{eq:k_IB} and solving minimum depth $t$ is equivalent to finding a minimum biclique cover on the associated conflict graph.

However, solving the minimum biclique cover problem (MBCP) is a well-known NP-complete problem on general simple graphs~\cite{orlin1977contentment} and even on chordal bipartite graphs~\cite{muller1996edge}. MBCP is also not approximable in polynomial time to less than $O(|V|^{1 - \epsilon})$ or $O(|E|^{1/2 - \epsilon})$ factor for any $\epsilon > 0$ unless $\PP = \NP$~\cite{chalermsook2014nearly}. Despite the hardness of solving MBCP on the general graphs, the biclique cover number of some well-structured graphs are known, such as complete graphs, $2n$-vertex crown graphs~\cite{de1981boolean}, and grid graphs~\cite{guo2018biclique}. Furthermore, some polynomial-time solutions are developed by focusing on specific types of graphs, like C4-free graphs~\cite{muller1996edge}, and domino-free graphs~\cite{amilhastre1998complexity}. Although many pairwise IB-representable CDCs have logarithmic-sized MIP formulations, there are no theoretical guarantees that the number of binary variables in MIP formulations provided by pairwise IB-scheme is upper bounded by $|\mathcal{S}|$\footnote{The upper bounds of the size of minimum biclique cover of a conflict graph with $n := |\bigcup_{S \in \mathcal{S}} S|$ vertices are the vertex cover number and $n -  \lfloor \log_2(n)\rfloor + 1$~\cite{tuza1984covering}; Neither quantity is upper bounded by $|\mathcal{S}|$.}. This motivates us to focus on a specific class of pairwise IB-representable combinatorial disjunctive constraints and to design a heuristic for MBCP on graphs associated with those constraints.

We are particularly interested in a class of pairwise IB-representable disjunctive constraints: Special Ordered Set (SOS) constraints first introduced by Beale and Tomlin~\cite{beale1970special}. The two standard $\SOS$ constraints are $\SOS 1$: a constraint for a set of non-negative variables such that at most one variable can be nonzero; $\SOS 2$: a constraint for a set of non-negative variables such that at most two consecutive variables can be nonzero. Formulations of $\SOS 1$ and $\SOS 2$ have been studied by many recent works:~\cite{adams2012base,muldoon2013ideal,vielma2010mixed,vielma2011modeling}, who show that those formulations can be modeled by a logarithmic number of auxiliary binary variables. A generalization of $\SOS 1$ and $\SOS 2$ is $\SOS k$, where at most $k$ consecutive variables in the set can be nonzero~\cite{huchette2019combinatorial}.

\paragraph{Our contributions}

\begin{enumerate}
    \item We prove that a combinatorial disjunctive constraint $\CDC(\mathcal{S})$ is pairwise IB-representable if it admits a \textit{junction tree} (as defined to come in Section~\ref{sec:junction_tree}), a particular graph decomposition know as a \textit{junction tree}.
    \item We can check whether a $\CDC$ admits a junction tree in time polynomial in $|\mathcal{S}|$ and $|J|$.
    \item We propose a polynomial-time approach to find an ideal mixed-integer programming formulation of $\CDC(\mathcal{S})$ admitting a junction tree by introducing at most $|\mathcal{S}| - 1$ binary variables and $2(|\mathcal{S}| - 1)$ constraints.
    \item We provide an novel ideal, non-extended formulation of $\SOS k(N)$ with at most $\lceil \log_2(N - k + 1) \rceil + k - 2$ auxiliary binary variables for integers $N > k \geq 2$, where $N$ is the number of variables in the $\SOS k$ constraint. We show that this improves upon an existing bound~\cite{huchette2019combinatorial} for any such values of $N$ and $k$. Furthermore, if we make an additional assumption that $k > C \lceil \log_2(N) \rceil$ for some $C > \frac{1}{2}$, then the number of binary variables needed by our new formulation is at most $\frac{C+1}{3C}$ of the number required by the existing formulation in~\cite{huchette2019combinatorial}. 
    \item We present a novel ideal extended formulation based on a junction tree of any CDC. In other words, we can transfer any $\CDC(\mathcal{S})$ into one admitting a junction tree with a cost of introducing additional continuous variables no more than $\sum_{S \in \mathcal{S}} |S| - |\bigcup_{S \in \mathcal{S}} S|$. We show this new formulation uses fewer auxiliary continuous variables in the planar obstacle avoidance application than the existing formulation~\eqref{form:cdc_vielma}.
\end{enumerate}

Recently, Kis and Horv{\'a}th \cite{kis2021ideal} also study the MIP formulation of disjunctive constraints admitting a network representation that is parallel to our study, where they discover many non-trivial facet-defining inequalities for $\SOS 2$ and $\SOS k$. Consider $\SOS k$ on variables $\lambda \in \mathbb{R}^N$ such that $\sum_{i=1}^N \lambda_i = 1$, $\lambda \geq 0$, and $\lambda$ has at most $k$ consecutive non-zero components. They write down a valid but not necessarily ideal MIP formulation:
\begin{subequations} \label{eq:sosk_kis}
\begin{alignat}{2}
    &\lambda_j - \sum_{i = \max\{j-k+1, 1\}}^j z_i \leq 0,\qquad & j = 1, \hdots, N, \\
    &\sum_{j=1}^N \lambda_j = 1, \\
    & \sum_{i=1}^{N-k+1} z_i = 1, \\
    & \lambda \geq 0, z \in \{0,1\}^N.
\end{alignat}
\end{subequations}

Then, they observe a set of non-trivial facets for~\eqref{eq:sosk_kis} from a corresponding network representation~\cite{kis2021ideal}. We observe that there are $N$ binary variables in~\eqref{eq:sosk_kis}, whereas we provide an ideal and non-extended MIP formulation with $\lceil \log_2(N - k + 1) \rceil + k - 2$ binary variables when $N > k \geq 2$.





\section{Preliminaries}

We first want to introduce some basic graph notations. A \textit{simple graph} is a pair $G := (V, E)$ where $V$ is a finite set of vertices and $E \subseteq \{uv: u, v\in V, u \neq v\}$. We use $V(G)$ and $E(G)$ to represent the vertex set and edge set of the graph $G$. A \textit{subgraph} $G' := (V', E')$ of $G$ is a graph where $V' \subseteq V$ and $E' \subseteq \{uv \in E: u, v \in V'\}$. An \textit{induced subgraph} of $G$ by only keeping vertices $A$ is denoted as $G(A) = (A, E_A)$, where $E_A = \{uv \in E: u, v \in A\}$. A graph is a \textit{cycle} if the vertices and edges are $V = \{v_1, v_2, \hdots, v_n\}$ and $E = \{v_1v_2, v_2v_3, \hdots, v_{n-1}v_n, v_nv_1\}$. A graph is a \textit{path} if the vertices and edges are $V = \{v_1, v_2, \hdots, v_n\}$ and $E = \{v_1v_2, v_2v_3, \hdots, v_{n-1}v_n\}$. A graph $G := (V, E)$ is \textit{connected} if there exists a path between $u$ and $v$ for any $u, v \in V$. A graph is \textit{tree} if it is connected and does not have any subgraph that is a cycle. A \textit{spanning tree} of a graph $G$ is a subgraph of $G$ that is tree whose vertex set equals that of $G$. We refer readers to~\cite{bondy2008graph} for further general graph theory background and definitions. 


We then formally define pairwise IB-representable, infeasible sets and conflict graphs of $\CDC(\mathcal{S})$. Note that $\mathbb{R}^N_{\geq 0} := \{x \in \mathbb{R}^N: x \geq 0\}$, $\llbracket N\rrbracket := \{1,2,\hdots, N\}$, and $\llbracket N_1, N_2 \rrbracket := \{N_1, \hdots, N_2\}$. We denote that $\Delta^N := \{\lambda \in \mathbb{R}^N_{\geq 0}: \sum_{i=1}^N \lambda_i = 1\}$ when $N$ is a positive integer and $\Delta^J := \{\lambda \in \mathbb{R}^{|J|}_{\geq 0}: \sum_{j \in J} \lambda_j = 1\}$.

\begin{definition}[pairwise IB-representable]
A combinatorial disjunctive constraint $\CDC(\mathcal{S})$ is \textit{pairwise IB-representable} if it can be written as
\begin{align}
\CDC(\mathcal{S}) = \bigcap_{j=1}^t \left( Q(L^j) \bigcup Q(R^j) \right),
\end{align}
\noindent for some $L^j, R^j \subseteq J$. We denote that $\{\{L^j, R^j\}\}_{j=1}^t$ is a \textit{pairwise IB-scheme} for $\CDC(\mathcal{S})$.
\end{definition}

\begin{definition}[feasible and infeasible sets] \label{def:feasible_sets} A set $S \subseteq J$ is a \textit{feasible set} with respect to $\CDC(\mathcal{S})$ if $S \subseteq T$ for some $T \in \mathcal{S}$. It is an \textit{infeasible set} otherwise. A \textit{minimal infeasible set} is an infeasible set $S \subseteq J$ such that any proper subset of $S$ is a feasible set.
\end{definition}

\begin{definition}[conflict graphs]
A \textit{conflict graph} for a $\CDC(\mathcal{S})$ is denoted as $G^c_{\mathcal{S}} := (J, \bar{E})$ with $\bar{E} := \{\{u, v\} \in J \times J: u \neq v, \{u, v\} \text{ is an infeasible set}\}$.
\end{definition}


\begin{proposition}[Theorem 1~\cite{huchette2019combinatorial}\footnote{We only consider the case when $k=2$ and we use minimal infeasible set directly without defining a hypergraph as in \cite{huchette2019combinatorial}.}] \label{prop:pairwise_IB_at_most_two}
A pairwise IB-scheme exists for $\CDC(\mathcal{S})$ if and only if each minimal infeasible set has cardinality at most 2.
\end{proposition}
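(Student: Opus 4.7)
The plan is to prove both implications of Proposition~\ref{prop:pairwise_IB_at_most_two} by directly analyzing supports of points in the IB-scheme representation.

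For the \emph{forward direction}, suppose $\CDC(\mathcal{S}) = \bigcap_{j=1}^t (Q(L^j) \cup Q(R^j))$ and, for contradiction, suppose there is a minimal infeasible set $I$ with $|I| \geq 3$. The key observation is that $\lambda \in Q(S)$ forces $\operatorname{supp}(\lambda) \subseteq S$. For each $v \in J$ and any nonempty $T \subseteq J$, let $\lambda^T$ denote the uniform distribution supported on $T$ (i.e., $\lambda^T_u = 1/|T|$ for $u \in T$, and $0$ otherwise). Since $I$ is minimal infeasible, $\lambda^I \notin \CDC(\mathcal{S})$ while $\lambda^{I \setminus \{v\}} \in \CDC(\mathcal{S})$ for every $v \in I$. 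Pick $j^*$ with $\lambda^I \notin Q(L^{j^*}) \cup Q(R^{j^*})$: then there exist $v_L \in I \setminus L^{j^*}$ and $v_R \in I \setminus R^{j^*}$. Now split into two cases. If $v_L \neq v_R$, pick any $w \in I \setminus \{v_L, v_R\}$ (possible since $|I| \geq 3$); then $\operatorname{supp}(\lambda^{I \setminus \{w\}})$ still contains $v_L$ and $v_R$, so $\lambda^{I \setminus \{w\}}$ lies in neither $Q(L^{j^*})$ nor $Q(R^{j^*})$, contradicting $\lambda^{I \setminus \{w\}} \in \CDC(\mathcal{S})$. If $v_L = v_R =: v$, take any $w \in I \setminus \{v\}$; then $\operatorname{supp}(\lambda^{I \setminus \{w\}})$ contains $v$, yielding the same contradiction. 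Hence every minimal infeasible set has size at most $2$.

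For the \emph{reverse direction}, assume every minimal infeasible set has cardinality at most $2$. Because $J = \bigcup_{S \in \mathcal{S}} S$, no singleton is infeasible, so every minimal infeasible set is exactly an edge of the conflict graph $G^c_{\mathcal{S}} = (J, \bar E)$. For each edge $e = \{u,v\} \in \bar E$, set $L^e := J \setminus \{u\}$ and $R^e := J \setminus \{v\}$, and define
\begin{equation*}
\Omega := \bigcap_{e = \{u,v\} \in \bar E} \bigl( Q(L^e) \cup Q(R^e) \bigr).
\end{equation*}
Any $\lambda \in \Omega$ satisfies $\lambda \in \Delta^J$, and $\lambda \in Q(L^e) \cup Q(R^e)$ precisely when $\{u,v\} \not\subseteq \operatorname{supp}(\lambda)$. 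Thus $\lambda \in \Omega$ iff $\lambda \in \Delta^J$ and $\operatorname{supp}(\lambda)$ contains no edge of $G^c_{\mathcal{S}}$, equivalently (by the assumption on minimal infeasible sets and Definition~\ref{def:feasible_sets}) iff $\operatorname{supp}(\lambda)$ is feasible, i.e., $\operatorname{supp}(\lambda) \subseteq T$ for some $T \in \mathcal{S}$. This is exactly the condition $\lambda \in \CDC(\mathcal{S})$, so $\Omega = \CDC(\mathcal{S})$ and $\{\{L^e, R^e\}\}_{e \in \bar E}$ is a pairwise IB-scheme.

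The main obstacle is the forward direction, specifically organizing the case analysis on $v_L, v_R$ so that the cardinality condition $|I| \geq 3$ is used exactly once to extract a ``third'' element $w$ (or in the coincidental case, any second element) whose removal preserves both $v_L$ and $v_R$ in the support. The reverse direction is mechanical once one notices that only pairwise infeasibilities need to be separated.
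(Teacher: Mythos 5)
Your proof is correct; note that the paper does not prove this proposition itself but imports it as Theorem~1 of~\cite{huchette2019combinatorial}, and your argument---uniform points supported on a minimal infeasible set (and on its one-element deletions) for necessity, and one pair $\{L^e,R^e\}=\{J\setminus\{u\},\,J\setminus\{v\}\}$ per conflict edge for sufficiency---is essentially the standard argument from that reference specialized to $k=2$. The only cosmetic omission is the degenerate case $\bar{E}=\emptyset$, where your family of pairs is empty; if a scheme with $t\geq 1$ is insisted upon, take $L^1=R^1=J$, which works because in that case every subset of $J$ (in particular $J$ itself) is feasible, so $\CDC(\mathcal{S})=\Delta^J$.
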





\begin{proposition}[Theorem 4~\cite{vielma2011modeling} and Proposition 2~\cite{huchette2019combinatorial}] \label{prop:IB_MIP}
Given a pairwise IB-scheme $\{\{L^j, R^j\}\}^t_{j=1}$ for $\CDC(\mathcal{S})$, the following is a ideal formulation for $\CDC(\mathcal{S})$: given $\lambda \in \Delta^J$,
\begin{subequations}
\begin{alignat*}{2}
    & \sum_{v \not\in L^j} \lambda_v \leq z_j, \qquad \sum_{v \not\in R^j} \lambda_v \leq 1 - z_j, \qquad z_j \in \{0, 1\}, \qquad & \forall j \in \llbracket t \rrbracket.
\end{alignat*}
\end{subequations}
\end{proposition}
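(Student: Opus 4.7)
My plan is to establish the two defining properties of this MIP formulation separately: first, that its integer feasible region projects onto $\CDC(\mathcal{S})$, and second, that every extreme point of the linear programming relaxation has integer $z$-coordinates. For validity, let $\lambda \in \CDC(\mathcal{S})$. By the pairwise IB-scheme, for each $j \in \llbracket t \rrbracket$ either $\lambda \in Q(L^j)$ or $\lambda \in Q(R^j)$; setting $z_j = 0$ in the first case gives $\sum_{v \notin L^j} \lambda_v = 0 \leq z_j$ and $\sum_{v \notin R^j} \lambda_v \leq 1 = 1 - z_j$, and the second case is symmetric. Conversely, if $z \in \{0,1\}^t$ satisfies the constraints, then $z_j = 0$ together with $\lambda \geq 0$ forces $\lambda_v = 0$ for all $v \notin L^j$, so $\lambda \in Q(L^j)$, and similarly $z_j = 1$ gives $\lambda \in Q(R^j)$; hence $\lambda$ lies in the intersection $\CDC(\mathcal{S})$.

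For ideality I would argue by contradiction. Suppose $(\lambda^*, z^*)$ is an extreme point of the LP relaxation with $z_j^* \in (0, 1)$ for some $j$, and first note that $J = L^j \cup R^j$: otherwise, for any $v \in J \setminus (L^j \cup R^j)$ the unit vector $e_v$ would belong to $\CDC(\mathcal{S})$ yet admit no feasible $z_j$ in the formulation. My aim is to exhibit two distinct LP-feasible points $(\lambda^L, z^L)$ and $(\lambda^R, z^R)$ with $z^L_j = 0$, $z^R_j = 1$, $\lambda^L$ supported on $L^j$, $\lambda^R$ supported on $R^j$, and
\[
   (\lambda^*, z^*) = (1 - z_j^*)\,(\lambda^L, z^L) + z_j^*\,(\lambda^R, z^R).
\]
The measures $\lambda^L, \lambda^R$ would be constructed by setting $\lambda^L_v = \lambda^*_v/(1 - z_j^*)$ and $\lambda^R_v = 0$ for $v \in L^j \setminus R^j$ (and symmetrically for $v \in R^j \setminus L^j$), and by splitting $\lambda^*_v$ into a free non-negative convex combination on the overlap $L^j \cap R^j$. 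The constraints $\sum_{v \notin L^j} \lambda^*_v \leq z_j^*$ and $\sum_{v \notin R^j} \lambda^*_v \leq 1 - z_j^*$ leave enough mass on $L^j \cap R^j$ to make both $\lambda^L, \lambda^R \in \Delta^J$.

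The main obstacle is extending the decomposition consistently to every remaining index $j' \neq j$. For each such $j'$ I must pick $z^L_{j'}, z^R_{j'}$ inside the intervals $[\sum_{v \notin L^{j'}} \mu_v,\ 1 - \sum_{v \notin R^{j'}} \mu_v]$ for $\mu \in \{\lambda^L, \lambda^R\}$ while simultaneously enforcing $(1-z_j^*)\, z^L_{j'} + z_j^*\, z^R_{j'} = z^*_{j'}$. Consistency of this system hinges on the linear splitting $\sum_{v \notin L^{j'}} \lambda^*_v = (1-z_j^*)\sum_{v \notin L^{j'}} \lambda^L_v + z_j^* \sum_{v \notin L^{j'}} \lambda^R_v$ (and its analogue for $R^{j'}$), so the LP bounds at $j'$ on $z^*_{j'}$ translate into a nonempty feasible interval for $z^R_{j'}$. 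Once this interval-selection step is carried out, $(\lambda^L, z^L)$ and $(\lambda^R, z^R)$ differ in their $j$-th $z$-component, contradicting extremality of $(\lambda^*, z^*)$ and establishing ideality.
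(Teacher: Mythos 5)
Your proof is correct, but note that the paper itself does not prove this proposition at all: it is imported verbatim from Theorem 4 of Vielma--Nemhauser and Proposition 2 of Huchette--Vielma, so the comparison here is with the standard literature argument rather than with anything in this manuscript. Your validity argument is exactly the standard one. For ideality, your disaggregation argument goes through: writing $(\lambda^*,z^*)$ with $z^*_j\in(0,1)$ as the combination $(1-z^*_j)(\lambda^L,z^L)+z^*_j(\lambda^R,z^R)$ works because (i) the constraints at $j$ give $\sum_{v\in R^j\setminus L^j}\lambda^*_v\le z^*_j$ and $\sum_{v\in L^j\setminus R^j}\lambda^*_v\le 1-z^*_j$, which is precisely what makes the overlap split on $L^j\cap R^j$ possible with both $\lambda^L,\lambda^R\in\Delta^J$, and (ii) for each $j'\ne j$ the linearity identity you cite shows that $z^*_{j'}$ lies in the weighted Minkowski sum of the intervals $\bigl[\sum_{v\notin L^{j'}}\mu_v,\,1-\sum_{v\notin R^{j'}}\mu_v\bigr]$ for $\mu\in\{\lambda^L,\lambda^R\}$, so admissible $z^L_{j'},z^R_{j'}$ exist. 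Two small points you should make explicit: the identity $J=L^{j'}\cup R^{j'}$ is needed for \emph{every} index $j'$, not only the fractional one, since it is what guarantees $J\setminus L^{j'}$ and $J\setminus R^{j'}$ are disjoint and hence that each of those intervals is nonempty (the same unit-vector argument you give for $j$ applies verbatim, because every $e_v$ with $v\in J$ lies in $\CDC(\mathcal{S})$ and must lie in $Q(L^{j'})\cup Q(R^{j'})$); and the ``free'' split on the overlap should be pinned down, e.g.\ proportionally, checking that the required overlap masses $1-\frac{1}{1-z^*_j}\sum_{v\in L^j\setminus R^j}\lambda^*_v$ and $1-\frac{1}{z^*_j}\sum_{v\in R^j\setminus L^j}\lambda^*_v$ are nonnegative and aggregate to $\sum_{v\in L^j\cap R^j}\lambda^*_v$. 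With those details spelled out, the contradiction with extremality is complete, and your route matches the spirit of the cited proofs.
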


From Proposition~\ref{prop:IB_MIP}, the size of the MIP formulation of $\CDC(\mathcal{S})$ obtained by a pairwise IB-scheme $\{\{L^j, R^j\}\}^t_{j=1}$ is directly determined by $t$, i.e. the numbers of binary variables and constraints are both $O(t)$. Hence, the next natural question to building a small MIP formulation is how to obtain a pairwise IB scheme with the smallest depth $t$. Huchette and Vielma~\cite{huchette2019combinatorial} transfer this problem into a minimum biclique cover problem in the associated conflict graph.

\begin{definition}[biclique covers]
A \textit{biclique} graph is a complete biparitite graph $(A \cup B, A \times B)$, which is denoted as $\{A, B\}$. A \textit{biclique cover} of graph $G = (J, E)$ is a collection of biclique subgraphs of $G$ that covers the edge set $E$.
\end{definition}

\begin{proposition}[Theorem 3~\cite{huchette2019combinatorial}] \label{prop:biclique_cover}
If $\{\{A^j, B^j\}\}_{j=1}^t$ is a biclique cover of the conflict graph $G^c_{\mathcal{S}}$ for a pairwise IB-representable $\CDC(\mathcal{S})$, then a pairwise IB-scheme of $\CDC(\mathcal{S})$ is given by $L^j = J \setminus A^j, R^j = J \setminus B^j, \forall j \in \llbracket t\rrbracket$. On the other hand, if $\{\{L^j, R^j\}\}_{j=1}^t$ is a pairwise IB-scheme for $\CDC(\mathcal{S})$, then a biclique cover of the conflict graph $G^c_{\mathcal{S}}$ is given by $A^j = J \setminus L^j, B^j = J \setminus R^j, \forall j \in \llbracket t \rrbracket.$
\end{proposition}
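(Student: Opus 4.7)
The plan is to prove both directions by linking the support structure of feasible points of $\CDC(\mathcal{S})$ to the bipartite structure of the bicliques in the conflict graph $G^c_{\mathcal{S}}$. A preliminary observation used throughout is that any valid pairwise IB-scheme must satisfy $L^j \cup R^j = J$ for every $j$: otherwise, for $u \in J \setminus (L^j \cup R^j)$ the unit vector $e_u$ lies in $\CDC(\mathcal{S})$ (since $u \in J$ implies $u \in S$ for some $S \in \mathcal{S}$, so the singleton $\{u\}$ is feasible) but not in $Q(L^j) \cup Q(R^j)$, contradicting the IB-scheme property. Equivalently, $A^j := J \setminus L^j$ and $B^j := J \setminus R^j$ are disjoint, so $\{A^j, B^j\}$ is a well-defined bipartite pair.

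For the first direction, assume $\{\{A^j, B^j\}\}_{j=1}^t$ is a biclique cover of $G^c_{\mathcal{S}}$ and set $L^j = J \setminus A^j$, $R^j = J \setminus B^j$. To show $\CDC(\mathcal{S}) \subseteq \bigcap_{j=1}^t (Q(L^j) \cup Q(R^j))$, take $\lambda \in Q(S)$ for some $S \in \mathcal{S}$; if both $S \cap A^j$ and $S \cap B^j$ were nonempty, then picking $u$ and $v$ from them yields an edge of the biclique $\{A^j, B^j\} \subseteq G^c_{\mathcal{S}}$ with endpoints in $S$, contradicting feasibility of $S$. Hence either $S \subseteq L^j$ or $S \subseteq R^j$, giving the inclusion. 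For the reverse inclusion, suppose $\lambda$ lies in the intersection but $\lambda \notin \CDC(\mathcal{S})$, so the support of $\lambda$ is infeasible; by Proposition~\ref{prop:pairwise_IB_at_most_two} it contains a minimal infeasible set of cardinality at most two, and since every singleton in $J$ is feasible, this minimal infeasible set is some edge $\{u,v\}$ of $G^c_{\mathcal{S}}$. Any biclique covering this edge, say with $u \in A^j$, $v \in B^j$, forces $\lambda \notin Q(L^j)$ and $\lambda \notin Q(R^j)$, a contradiction.

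For the second direction, assume $\{\{L^j, R^j\}\}_{j=1}^t$ is a pairwise IB-scheme and put $A^j = J \setminus L^j$, $B^j = J \setminus R^j$; by the preliminary observation $A^j \cap B^j = \emptyset$. To check that each $\{A^j, B^j\}$ is a sub-biclique of $G^c_{\mathcal{S}}$, take distinct $u \in A^j$, $v \in B^j$; if $\{u,v\}$ were feasible, then $\frac{1}{2}(e_u + e_v) \in \CDC(\mathcal{S})$ would be excluded from $Q(L^j) \cup Q(R^j)$ since $u \notin L^j$ and $v \notin R^j$, contradicting the IB-scheme. To check the covering property, take any edge $\{u,v\}$ of $G^c_{\mathcal{S}}$: since $\{u,v\}$ is infeasible, $\frac{1}{2}(e_u + e_v) \notin \CDC(\mathcal{S})$, so there is some $j$ with $\{u,v\} \not\subseteq L^j$ and $\{u,v\} \not\subseteq R^j$, and combined with $A^j \cap B^j = \emptyset$ this forces one of $u,v$ into $A^j$ and the other into $B^j$, so $\{u,v\}$ is an edge of the biclique.

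The step that deserves the most care is the case analysis in the covering direction: without the disjointness $A^j \cap B^j = \emptyset$, an edge of $G^c_{\mathcal{S}}$ violating both $L^j$ and $R^j$ could be "covered" only by having a single vertex lie in both $A^j$ and $B^j$, which would not place the two endpoints on opposite sides of the biclique. The preliminary observation about singletons being feasible, combined with Proposition~\ref{prop:pairwise_IB_at_most_two} ruling out singleton minimal infeasible sets, is exactly what closes this gap in both directions.
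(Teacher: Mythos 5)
Your proof is correct: the paper itself does not prove this proposition (it is imported verbatim as Theorem~3 of Huchette and Vielma~\cite{huchette2019combinatorial}), and your argument reconstructs the standard one, using unit vectors $e_u$ and midpoints $\tfrac12(e_u+e_v)$ of feasible/infeasible pairs together with Proposition~\ref{prop:pairwise_IB_at_most_two} to reduce infeasibility of a support to an edge of $G^c_{\mathcal{S}}$. The preliminary observation $L^j \cup R^j = J$ (equivalently $A^j \cap B^j = \emptyset$), which you single out, is indeed the one detail that must be established for the covering direction to go through, and you prove it correctly.
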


\begin{corollary} \label{cor:cdc_bc}
Given a biclique cover $\{\{A^j, B^j\}\}_{j=1}^t$ of the conflict graph $G^c_{\mathcal{S}}$ for a pairwise IB-representable $\CDC(\mathcal{S})$, the following is an ideal formulation for $\CDC(\mathcal{S})$ with $J:= \bigcup_{S \in \mathcal{S}} S$:
\begin{subequations}
\begin{alignat}{2}
    & \sum_{v \in A^j} \lambda_v \leq z_j, & \forall j \in \llbracket t\rrbracket\\
    & \sum_{v \in B^j} \lambda_v \leq 1 - z_j, \quad & \forall j \in \llbracket t\rrbracket \\
    & \lambda \in \Delta^J \\
    & z_j \in \{0, 1\}, & \forall j \in \llbracket t\rrbracket .
\end{alignat}
\end{subequations}
\end{corollary}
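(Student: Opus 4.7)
The plan is to chain the two propositions already established in the excerpt. First I would apply Proposition~\ref{prop:biclique_cover} to the given biclique cover $\{\{A^j,B^j\}\}_{j=1}^t$ of the conflict graph $G^c_{\mathcal{S}}$, which immediately produces a pairwise IB-scheme $\{\{L^j,R^j\}\}_{j=1}^t$ for $\CDC(\mathcal{S})$ defined by $L^j := J \setminus A^j$ and $R^j := J \setminus B^j$ for every $j \in \llbracket t \rrbracket$. Then I would feed this pairwise IB-scheme into Proposition~\ref{prop:IB_MIP} to obtain an ideal MIP formulation consisting of $\lambda \in \Delta^J$ together with, for each $j \in \llbracket t \rrbracket$, the inequalities $\sum_{v \notin L^j} \lambda_v \leq z_j$ and $\sum_{v \notin R^j} \lambda_v \leq 1 - z_j$ along with $z_j \in \{0,1\}$.

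The remaining step is a routine rewriting via set-complement bookkeeping: since each biclique in the cover is a subgraph of $G^c_{\mathcal{S}}$, the vertex subsets $A^j, B^j$ lie inside $J$, so $J \setminus L^j = J \setminus (J \setminus A^j) = A^j$ and likewise $J \setminus R^j = B^j$. Substituting these identities into the formulation produced by Proposition~\ref{prop:IB_MIP} yields exactly the system in the statement, and idealness is preserved because the resulting system is a literal reparameterization of an already-ideal formulation. There is no genuine obstacle here: the argument is a direct composition of the two preceding propositions, so the proof amounts to verifying the complement identity and transcribing the inequalities in the new variables.
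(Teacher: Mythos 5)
Your proposal is correct and matches the paper's (implicit) justification: the corollary is exactly the composition of Proposition~\ref{prop:biclique_cover}, which turns the biclique cover into the pairwise IB-scheme $L^j = J \setminus A^j$, $R^j = J \setminus B^j$, with Proposition~\ref{prop:IB_MIP}, followed by the observation that $J \setminus L^j = A^j$ and $J \setminus R^j = B^j$ since $A^j, B^j \subseteq J$. Nothing further is needed.
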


\section{Junction Trees and Pairwise IB-Representability} \label{sec:junction_tree}

In this section, we introduce a new class of CDCs that is pairwise IB-representable: CDCs admitting junction trees.

\begin{definition}[(complete) intersection graphs] \label{def:intersection}
The \textit{(complete) intersection graph} of $\CDC(\mathcal{S})$ is denoted as $\mathcal{K}_{\mathcal{S}} = (\mathcal{S}, \mathcal{E})$ where $\mathcal{E} = \{S^1S^2: S^1,S^2 \in \mathcal{S}\}$. The \textit{middle set} of the edge $S^1 S^2$ is defined as $\MID(S^1 S^2) := S^1 \cap S^2$, and the \textit{weight} is $w(S^1 S^2) := |\MID(S^1 S^2)|$.
\end{definition}

\begin{definition}[junction trees] \label{def:junction_tree}
A \textit{junction tree} of $\CDC(\mathcal{S})$ is denoted as $\mathcal{T}_{\mathcal{S}} = (\mathcal{S}, \mathcal{E})$, where $\mathcal{T}_{\mathcal{S}}$ is a tree and $\mathcal{E}$ satisfies:
\begin{itemize}
    \item For any $S^1, S^2 \in \mathcal{S}$, the unique path $\mathcal{P}$ between $S^1$ and $S^2$ in $\mathcal{T}_{\mathcal{S}}$ satisfies that $S^1 \cap S^2 \subseteq S$ for any $S \in V(\mathcal{P})$, and $S^1 \cap S^2 \subseteq \MID(e)$ for any $e \in E(\mathcal{P})$.
\end{itemize}

We will say that $\CDC(\mathcal{S})$ admitting a junction tree if such a junction tree exists. We denote that $w(G) := \sum_{e \in E(G)} w(e)$ for a graph $G$ with weights on its edge set. We also define the distance between two spanning tree subgraph $T_1, T_2$ of graph $G$ to be $\dist(T_1, T_2) := |E(T_1) \setminus E(T_2)| \equiv |E(T_2) \setminus E(T_1)|$.

\end{definition}

Note that a junction tree of $\CDC(\mathcal{S})$, $\mathcal{T}_{\mathcal{S}}$, is a spanning tree subgraph of the intersection graph, $\mathcal{K}_{\mathcal{S}}$.

In addition, We show that admitting a junction tree is a sufficient condition for $\CDC(\mathcal{S})$ to be pairwise IB-representable.




\begin{theorem} \label{thm:IB_tree}
If $\CDC(\mathcal{S})$ admits a junction tree $\mathcal{T}_{\mathcal{S}}$, then $\CDC(\mathcal{S})$ is pairwise IB-representable.
\end{theorem}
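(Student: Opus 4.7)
The plan is to apply Proposition~\ref{prop:pairwise_IB_at_most_two}, which says that $\CDC(\mathcal{S})$ is pairwise IB-representable if and only if every minimal infeasible set has cardinality at most $2$. Thus it suffices to show that if $S \subseteq J$ has the property that every two-element subset of $S$ is feasible, then $S$ itself is feasible; this rules out any minimal infeasible set of size $\geq 3$.

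The key bridge is a \emph{running intersection property} that is already built into Definition~\ref{def:junction_tree}: for each $v \in J$, the subgraph $\mathcal{T}_v$ of $\mathcal{T}_{\mathcal{S}}$ induced on $\mathcal{S}_v := \{S \in \mathcal{S} : v \in S\}$ is a nonempty connected subtree. This is immediate from the definition, because whenever $v \in S^1 \cap S^2$ we have $v \in S$ for every node $S$ on the unique $S^1$--$S^2$ path in $\mathcal{T}_{\mathcal{S}}$, so $\mathcal{T}_v$ has no gaps along that path, and nonemptiness follows from $v \in J = \bigcup_{S \in \mathcal{S}} S$.

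I would then invoke the classical Helly property for subtrees of a tree: any finite family of pairwise-intersecting subtrees of a tree has a common vertex. Applied to the family $\{\mathcal{T}_v : v \in S\}$, pairwise feasibility of $S$ gives that for every $u, w \in S$ there exists some $T \in \mathcal{S}$ with $u, w \in T$, i.e.\ $T \in \mathcal{T}_u \cap \mathcal{T}_w$, so the hypothesis of Helly holds. The common vertex it produces is a single $S^* \in \mathcal{S}$ with $S \subseteq S^*$, proving $S$ is feasible and completing the contrapositive argument.

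The main obstacle is making the Helly step airtight within the language of the paper. I would settle it with a short self-contained lemma. For three pairwise-intersecting subtrees $T_1, T_2, T_3$ of a tree, pick any $p_{ij} \in T_i \cap T_j$; the median $m$ of $\{p_{12}, p_{13}, p_{23}\}$ lies on the $p_{ij}$--$p_{ik}$ path in the ambient tree and hence in $T_i$ for each $i$, giving a common vertex. For $n \geq 4$ pairwise-intersecting subtrees, applying the three-subtree case to $T_i, T_{n-1}, T_n$ for each $i \leq n-2$ shows that $T_{n-1} \cap T_n$ still pairwise intersects every remaining $T_i$; replacing $T_{n-1}, T_n$ by $T_{n-1} \cap T_n$ thus reduces to $n-1$ pairwise-intersecting subtrees, and induction finishes. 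Combining Helly with the running intersection property and Proposition~\ref{prop:pairwise_IB_at_most_two} yields the theorem.
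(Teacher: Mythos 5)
Your proof is correct, and it reaches the conclusion by a more general route than the paper's. The paper argues directly by contradiction: it takes a minimal infeasible set $S$ with $|S|\geq 3$, uses minimality to produce three sets $S^{x_1},S^{x_2},S^{x_3}\in\mathcal{S}$ each containing $S$ minus one element, takes the median vertex $S^0$ of these three nodes in $\mathcal{T}_{\mathcal{S}}$, and applies the junction-tree condition to get $S\setminus\{x_i,x_j\}\subseteq S^{x_i}\cap S^{x_j}\subseteq S^0$ for all pairs, hence $S\subseteq S^0$ --- a contradiction with infeasibility. You instead prove the stronger structural statement that any pairwise-feasible subset of $J$ is feasible, by first extracting the running intersection property (each $\mathcal{S}_v=\{S\in\mathcal{S}:v\in S\}$ induces a subtree of $\mathcal{T}_{\mathcal{S}}$, which indeed follows immediately from Definition~\ref{def:junction_tree}) and then invoking the Helly property for subtrees of a tree applied to $\{\mathcal{T}_v: v\in S\}$. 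The two arguments share the same combinatorial core --- your three-subtree Helly case is exactly the paper's median-of-three-vertices trick --- but they package it differently: the paper exploits minimality of the infeasible set so that three sets suffice and no induction is needed, while you avoid minimality altogether at the cost of proving the full $n$-subtree Helly lemma by induction (in that induction you should note explicitly that $T_{n-1}\cap T_n$ is again a subtree, i.e.\ connected, which holds because any two of its vertices are joined by the unique tree path lying in both $T_{n-1}$ and $T_n$; this is the only small step left implicit). What your route buys is a cleaner conceptual picture --- CDCs admitting junction trees have conformal feasibility structure, so minimal infeasible sets automatically have size at most two --- whereas the paper's proof is shorter and needs nothing beyond the uniqueness of paths in a tree.
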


\begin{proof}
Assume that $\CDC(\mathcal{S})$ is not pairwise IB-representable. By Proposition~\ref{prop:pairwise_IB_at_most_two}, there exists a minimal infeasible set $S \subseteq J$ with cardinality greater than 2. Without loss of generality, assume that $S = \{x_1, x_2, \hdots, x_k\}$ where $k = |S|$ and $k \geq 3$. Since $S$ is a minimal infeasible set, then $S \setminus \{x_1\}$, $S \setminus \{x_2\}$ and $S \setminus \{x_3\}$ are not infeasible sets. Thus, there exists some $S^{x_1}$, $S^{x_2}$, $S^{x_3} \in \mathcal{S}$ such that $(S \setminus \{x_j\}) \subseteq S^{x_j}$. By~\cite[Proposition 4.1]{bondy2008graph}: in a tree, any two vertices are connected by exactly one path, there exists a vertex $S^0$ in $\mathcal{T}_{\mathcal{S}}$ such that it is on the paths between any two of $S^{x_1}$, $S^{x_2}$, $S^{x_3}$. By the existence of $S^0$ and definition of junction trees, $S \setminus \{x_i, x_j\} \subseteq S^0$ for all distinct $i, j \in \llbracket 3 \rrbracket$. Hence, $S \subseteq S^0$, which is a contradiction.
\qed\end{proof}


\section{When a Disjunctive Constraint Admits a Junction Tree}

Theorem~\ref{thm:IB_tree} shows that $\CDC$ admitting a junction tree is pairwise IB-representable. Then, we can apply Proposition~\ref{prop:biclique_cover} to construct a small MIP formulation by solving a minimum biclique cover problem on its conflict graph. We start by studying how to determine whether a CDC admits a junction tree.


\begin{theorem} \label{thm:check_jc_alg}
A $\CDC(\mathcal{S})$ admits a junction tree if and only if each maximum spanning tree $\mathcal{S}$ of the intersection graph $\mathcal{K}_{\mathcal{S}}$ satisfies:
\begin{itemize}
    \item for each $e \in E(\mathcal{T})$ and vertex sets $S_1$ and $S_2$ that form connected subgraphs of $\mathcal{T} \setminus e$, $\bigcup_{S \in \mathcal{S}_1} S \cap \bigcup_{S \in \mathcal{S}_2} S \not\subseteq \MID(e)$.
\end{itemize}
\end{theorem}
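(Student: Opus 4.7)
The plan is to reduce this characterization to the classical fact that junction trees are precisely the maximum-weight spanning trees of the intersection graph. First I would record that, for any spanning tree $\mathcal{T}$ of $\mathcal{K}_{\mathcal{S}}$ and any edge $e \in E(\mathcal{T})$ with component partition $\{\mathcal{S}_1, \mathcal{S}_2\}$ of $\mathcal{T} \setminus e$, the identity
\begin{equation*}
    \Bigl(\bigcup_{S \in \mathcal{S}_1} S\Bigr) \cap \Bigl(\bigcup_{S \in \mathcal{S}_2} S\Bigr) \;=\; \bigcup_{S^1 \in \mathcal{S}_1,\, S^2 \in \mathcal{S}_2} (S^1 \cap S^2)
\end{equation*}
shows that the edge-cut condition in the statement is equivalent to requiring $S^1 \cap S^2 \subseteq \MID(e)$ for every pair $S^1, S^2$ separated by $e$; the vertex clause of Definition~\ref{def:junction_tree} then follows by applying the edge clause to an edge of the path incident to the given interior vertex. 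Thus the stated condition on $\mathcal{T}$ is equivalent to $\mathcal{T}$ being a junction tree.

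One direction of the iff is then immediate: since $\mathcal{K}_{\mathcal{S}}$ is complete, a maximum spanning tree exists, and under the stated hypothesis it is automatically a junction tree.

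For the converse, my strategy is to identify junction trees of $\mathcal{K}_{\mathcal{S}}$ with its maximum-weight spanning trees. For each $v \in J$, set $\mathcal{S}_v := \{S \in \mathcal{S} : v \in S\}$, and double count pairs $(e, v)$ with $v \in \MID(e)$ to obtain
\begin{equation*}
    w(\mathcal{T}) \;=\; \sum_{e \in E(\mathcal{T})} |\MID(e)| \;=\; \sum_{v \in J} |E(\mathcal{T}(\mathcal{S}_v))|,
\end{equation*}
since $v \in \MID(e)$ iff both endpoints of $e$ lie in $\mathcal{S}_v$. The forest $\mathcal{T}(\mathcal{S}_v)$ has at most $|\mathcal{S}_v| - 1$ edges, with equality precisely when $\mathcal{S}_v$ induces a connected subgraph of $\mathcal{T}$, so $w(\mathcal{T}) \leq \sum_{v \in J}(|\mathcal{S}_v| - 1)$ with equality exactly when every $\mathcal{S}_v$ is $\mathcal{T}$-connected. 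Definition~\ref{def:junction_tree} immediately yields this connectivity for any junction tree $\mathcal{T}^*$ (for $S^1, S^2 \in \mathcal{S}_v$ every vertex on the $\mathcal{T}^*$-path between them contains $v$), so $\mathcal{T}^*$ attains the bound. Consequently every maximum-weight spanning tree $\mathcal{T}$ also attains the bound, so every $\mathcal{S}_v$ is $\mathcal{T}$-connected; and then for arbitrary $S^1, S^2 \in \mathcal{S}$ and $v \in S^1 \cap S^2$ the unique $\mathcal{T}$-path from $S^1$ to $S^2$ stays in $\mathcal{S}_v$, which is exactly the junction tree property.

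The main obstacle I anticipate is the double-counting step together with its tightness analysis: isolating the elementwise decomposition of $w$ over $J$ and extracting the per-variable connectivity criterion is what drives the entire argument. Once those are in hand, both directions follow mechanically, though the equivalences among the edge-cut condition, the per-variable connectivity condition, and Definition~\ref{def:junction_tree} should each be written out explicitly to avoid missing a case.
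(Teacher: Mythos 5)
Your proof is correct, but it reaches the conclusion by a genuinely different route than the paper. (You also implicitly read the displayed condition as $\subseteq \MID(e)$ rather than the printed $\not\subseteq$, which is the intended reading, consistent with Proposition~\ref{prop:check_junction_tree}.) The paper decomposes the theorem into three pieces: Proposition~\ref{prop:tree_mst} shows a non-maximum spanning tree cannot be a junction tree (via the local-exchange characterization of maximum spanning trees, Lemma~\ref{lm:tree_local}); Proposition~\ref{prop:mst_junction} shows that if one junction tree exists then \emph{every} maximum spanning tree is a junction tree, proved by connecting any two maximum spanning trees through a sequence of single-edge swaps (Lemmas~\ref{lm:mst_exchange} and~\ref{lm:seq_mst}) and checking that the junction property survives each swap (Lemma~\ref{lm:tree_not_junction}); and Proposition~\ref{prop:check_junction_tree} translates the junction property into the per-edge cut condition, which you also establish (your distributivity identity plus the observation that the vertex clause follows from the edge clause is exactly the content of that proposition). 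Where you diverge is in replacing the entire exchange-based machinery by the classical running-intersection argument: the double count $w(\mathcal{T}) = \sum_{v \in J} |E(\mathcal{T}(\mathcal{S}_v))| \leq \sum_{v \in J} (|\mathcal{S}_v| - 1)$, with equality exactly when each $\mathcal{S}_v$ induces a connected subtree, and the observation that junction trees attain the bound while attaining the bound forces the junction property. This buys a shorter, self-contained proof that avoids the external exchange lemma of Kravitz, and it yields as a byproduct a sharper version of Proposition~\ref{prop:tree_mst} (every junction tree is itself a maximum spanning tree, since it attains the global weight bound); the paper's swap-by-swap route is more local and combinatorial but requires more supporting lemmas. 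The only points you should write out carefully are the ones you already flag: equality in the forest bound requires each $\mathcal{T}(\mathcal{S}_v)$ to be connected ($|\mathcal{S}_v| - c_v$ edges for $c_v$ components), and uniqueness of tree paths is what lets you conclude the $\mathcal{T}$-path between two members of $\mathcal{S}_v$ stays inside $\mathcal{S}_v$.
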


\begin{corollary}
Given an arbitrary $\mathcal{S} := \{S^i\}_{i=1}^d$ and $n := |\bigcup_{i=1}^d S^i|$, it is possible to determine whether $\CDC(\mathcal{S})$ admits a junction tree in time polynomial in $d$ and $n$.
\end{corollary}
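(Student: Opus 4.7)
The plan is to convert Theorem~\ref{thm:check_jc_alg} into an explicit polynomial-time decision procedure.

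First I would construct the intersection graph $\mathcal{K}_{\mathcal{S}}$: it has $d$ vertices and at most $\binom{d}{2}$ edges, and each weight $w(S^i S^j) = |S^i \cap S^j|$ can be computed in $O(n)$ time, so the weighted graph is built in $O(d^2 n)$ operations. Next, I would compute a single maximum-weight spanning tree $\mathcal{T}$ of $\mathcal{K}_{\mathcal{S}}$ using Kruskal's (or Prim's) algorithm, which is polynomial in $d$.

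Then, for each of the $d-1$ edges $e \in E(\mathcal{T})$, removing $e$ splits $\mathcal{T}$ into two subtrees on vertex sets $\mathcal{S}_1, \mathcal{S}_2$, found by a single graph traversal. I would form the unions $U_i := \bigcup_{S \in \mathcal{S}_i} S$ in $O(dn)$ time and test in $O(n)$ time whether $U_1 \cap U_2 \subseteq \MID(e)$. If every edge passes this test, declare that $\CDC(\mathcal{S})$ admits a junction tree; otherwise declare that it does not. The total running time is $O(d^2 n)$, which is polynomial in $d$ and $n$.

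The main obstacle is reconciling the universal quantifier in Theorem~\ref{thm:check_jc_alg}, which ranges over \emph{every} maximum spanning tree of $\mathcal{K}_{\mathcal{S}}$, with the algorithm's inspection of a single $\mathcal{T}$. I would close this gap by showing that the edge-cut condition in Theorem~\ref{thm:check_jc_alg} is invariant across maximum spanning trees: any two maximum spanning trees of a weighted graph are connected by a sequence of single-edge swaps in which the removed and inserted edges carry the same weight (a standard consequence of the matroid exchange property). A direct verification shows that such a swap preserves the union-intersection containment at the swapped edge. Given this invariance, inspecting any one maximum spanning tree suffices, and the procedure above is both correct and polynomial in $d$ and $n$.
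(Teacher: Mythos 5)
Your algorithm is exactly the intended route to this corollary: build the weighted intersection graph $\mathcal{K}_{\mathcal{S}}$, compute a single maximum spanning tree $\mathcal{T}$, and test, for each of its $d-1$ edges, whether the induced vertex bipartition $(\mathcal{S}_1,\mathcal{S}_2)$ satisfies $\bigl(\bigcup_{S\in\mathcal{S}_1}S\bigr)\cap\bigl(\bigcup_{S\in\mathcal{S}_2}S\bigr)\subseteq\MID(e)$; correctness rests on Proposition~\ref{prop:check_junction_tree} (the cut test characterizes junction trees among spanning trees), Proposition~\ref{prop:tree_mst} (only maximum spanning trees can be junction trees), and the fact that one maximum spanning tree suffices, which is the content of Proposition~\ref{prop:mst_junction}. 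The running-time accounting is fine.

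The genuine gap is in how you justify that last invariance step. You reduce it to a single weight-preserving edge swap $\mathcal{T}_2=\mathcal{T}_1-e_1+e_2$ between maximum spanning trees (as in Lemmas~\ref{lm:mst_exchange} and~\ref{lm:seq_mst}) and then assert that ``a direct verification shows that such a swap preserves the union-intersection containment \emph{at the swapped edge}.'' That check is not sufficient: the swap changes the vertex bipartitions induced by \emph{every} edge on the fundamental cycle of $e_2$, not only at $e_1$ and $e_2$. For instance, with vertex set $\{a,b,c\}$, $\mathcal{T}_1=\{ab,bc\}$ and $e_2=ac$, removing $bc$ from $\mathcal{T}_1$ separates $\{a,b\}$ from $\{c\}$, while removing $bc$ from $\mathcal{T}_2=\{ac,bc\}$ separates $\{b\}$ from $\{a,c\}$; so the cut condition must be re-established at those edges too. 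Moreover, equal weights of $e_1$ and $e_2$ alone do not give $\MID(e_1)=\MID(e_2)$. This is precisely the work done by Lemma~\ref{lm:tree_not_junction}: using that $\mathcal{T}_1$ satisfies the junction-tree (equivalently, cut) condition together with maximality of both trees, one first shows $\MID(e)=\MID(e_2)$ for every edge $e$ of the fundamental cycle, and then verifies the path condition for all pairs of vertices that get rerouted through $e_2$. Your proposal would be complete once this lemma (or an equivalent argument covering all affected cuts) is supplied; as written, the ``direct verification at the swapped edge'' would not establish the claimed invariance.
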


In order to prove Theorem~\ref{thm:check_jc_alg}, we first show that $\mathcal{T}$ cannot be a junction tree of $\CDC(\mathcal{S})$ if $\mathcal{T}$ is not a maximum spanning tree of $\mathcal{K}_{\mathcal{S}}$ in Proposition~\ref{prop:tree_mst}. Then, we prove that $\CDC(\mathcal{S})$ admits a junction tree if and only if all maximum spanning trees of $\mathcal{K}_{\mathcal{S}}$ are junctions trees of $\CDC(\mathcal{S})$ in Proposition~\ref{prop:mst_junction}. Finally, we prove that a spanning tree $\mathcal{T}$ of $\mathcal{K}_{\mathcal{S}}$ is a junction tree of $\CDC(\mathcal{S})$ if and only if each edge $e$ in $\mathcal{T}$ separates vertices into $\mathcal{S}_1$ and $\mathcal{S}_2$ such that $\left(\bigcup_{S \in \mathcal{S}_1} S \right) \cap \left(\bigcup_{S \in \mathcal{S}_2} S \right) \subseteq \MID(e)$ in Proposition~\ref{prop:check_junction_tree}.

\begin{lemma}[Theorem 3-16~\cite{deo2017graph}] \label{lm:tree_local} A spanning tree $T$ of a weighted graph $G$ is a maximum spanning tree if and only if there does not exist another spanning tree $T'$ of $G$ such that $w(T) < w(T')$ and $\dist(T, T') = 1$.
\end{lemma}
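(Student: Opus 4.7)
The plan is to split the equivalence into two directions. The forward direction --- if $T$ is a maximum spanning tree then no neighbor can improve upon it --- is immediate from the definition: any spanning tree $T'$, regardless of whether $\dist(T,T') = 1$, satisfies $w(T') \leq w(T)$. The entire content of the lemma therefore lies in the converse, namely that local optimality under single-edge swaps forces global optimality.

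For the converse I would argue by contradiction using an extremal choice. Suppose $T$ is locally optimal but not a maximum spanning tree, and let $T^*$ be a maximum spanning tree chosen to minimize $\dist(T, T^*)$. Since $T \neq T^*$ we have $\dist(T, T^*) \geq 1$, so we may pick some $e^* \in E(T^*) \setminus E(T)$. Adding $e^*$ to $T$ creates a unique cycle $C$, while removing $e^*$ from $T^*$ splits $T^*$ into two components whose cut I denote by $K$. The crucial step is to produce an edge $e \in E(T) \setminus E(T^*)$ lying simultaneously on $C$ and in $K$; once such an $e$ is in hand, both
\[
T' := (T \setminus \{e\}) \cup \{e^*\} \quad \text{and} \quad T^{**} := (T^* \setminus \{e^*\}) \cup \{e\}
\]
are spanning trees, with $\dist(T, T') = 1$ and $\dist(T, T^{**}) = \dist(T, T^*) - 1$.

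Given the exchange edge $e$, the argument finishes quickly. Local optimality of $T$ forces $w(T') \leq w(T)$, i.e., $w(e^*) \leq w(e)$, while maximality of $T^*$ forces $w(T^{**}) \leq w(T^*)$, i.e., $w(e) \leq w(e^*)$. Therefore $w(e) = w(e^*)$, so $T^{**}$ is itself a maximum spanning tree, but with $\dist(T, T^{**}) < \dist(T, T^*)$, contradicting the minimality in the choice of $T^*$.

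The one non-trivial step is producing an exchange edge simultaneously on the cycle $C$ and in the cut $K$; this is the strong base exchange property specialized to the graphic matroid, and it is the main obstacle. I would prove it combinatorially as follows. Let $V_1, V_2$ denote the components of $T^* \setminus \{e^*\}$, and traverse the cycle $C$ starting at the $V_1$-endpoint of $e^*$ and ending at the $V_2$-endpoint. All edges of $C$ other than $e^*$ lie in $T$, and somewhere along the traversal we must cross from $V_1$ to $V_2$ across some non-$e^*$ edge $e \in E(T)$. Since removing $e^*$ disconnects $V_1$ from $V_2$ in $T^*$, no edge of $T^*$ crosses this cut, so $e \notin E(T^*)$. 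Hence $e \in E(T) \setminus E(T^*)$ lies on both $C$ and $K$, which is exactly the property required by the exchange step above.
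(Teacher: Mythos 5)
Your proof is correct. Note that the paper itself does not prove this lemma at all: it is imported verbatim as Theorem 3-16 of Deo's textbook, so there is no in-paper argument to compare against. Your blind proof is essentially the standard cyclic-exchange argument one finds for this statement (and for its minimum-spanning-tree mirror image): the forward direction is trivial, and for the converse you take a maximum spanning tree $T^*$ at minimum distance from the locally optimal $T$, find an edge $e \in E(T)\setminus E(T^*)$ lying both on the fundamental cycle of $e^*$ with respect to $T$ and in the fundamental cut of $e^*$ with respect to $T^*$, and play the two inequalities $w(e^*)\le w(e)$ (local optimality of $T$) and $w(e)\le w(e^*)$ (maximality of $T^*$) against each other to produce a maximum spanning tree strictly closer to $T$, a contradiction. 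All steps check out: $T'=(T\setminus\{e\})\cup\{e^*\}$ and $T^{**}=(T^*\setminus\{e^*\})\cup\{e\}$ are indeed spanning trees, $\dist(T,T')=1$, and $\dist(T,T^{**})=\dist(T,T^*)-1$ since $e\in E(T)$ and $e^*\notin E(T)$. One cosmetic slip: in the cut-and-cycle step you write that ``no edge of $T^*$ crosses this cut,'' whereas of course $e^*$ itself does; what you mean (and what you use) is that no edge of $T^*\setminus\{e^*\}$ crosses it, and since your crossing edge $e$ lies in $E(T)$ it cannot equal $e^*$, so $e\notin E(T^*)$. With that wording fixed, the argument is a complete, self-contained proof of the cited result.
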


\begin{proposition} \label{prop:tree_mst}
Given $\CDC(\mathcal{S})$ and its intersection graph $\mathcal{K}_{\mathcal{S}}$, any non-maximum spanning tree of $\mathcal{K}_{\mathcal{S}}$ cannot be a junction tree of $\CDC(\mathcal{S})$.
\end{proposition}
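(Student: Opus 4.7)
The plan is to argue by contradiction: suppose $\mathcal{T}$ is a spanning tree of $\mathcal{K}_{\mathcal{S}}$ that is \emph{not} a maximum spanning tree, yet is a junction tree of $\CDC(\mathcal{S})$. The key tool is Lemma~\ref{lm:tree_local}, which characterizes maximum spanning trees locally: since $\mathcal{T}$ fails to be maximum, there must exist another spanning tree $\mathcal{T}'$ of $\mathcal{K}_{\mathcal{S}}$ with $\dist(\mathcal{T}, \mathcal{T}') = 1$ and $w(\mathcal{T}) < w(\mathcal{T}')$. This means $\mathcal{T}$ and $\mathcal{T}'$ differ in exactly one edge each: there exist $e \in E(\mathcal{T}) \setminus E(\mathcal{T}')$ and $e' \in E(\mathcal{T}') \setminus E(\mathcal{T})$, and because the total weights differ, $w(e) < w(e')$.

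Next I would use a standard spanning tree fact: adding the edge $e' = S^1 S^2$ to $\mathcal{T}$ creates a unique cycle, and since removing $e$ from $\mathcal{T} \cup \{e'\}$ yields the tree $\mathcal{T}'$, the edge $e$ must lie on that cycle. Equivalently, $e$ lies on the unique path $\mathcal{P}$ in $\mathcal{T}$ between the endpoints $S^1$ and $S^2$ of $e'$. At this point I would invoke the assumed junction tree property of $\mathcal{T}$: it guarantees that $S^1 \cap S^2 \subseteq \MID(f)$ for every edge $f \in E(\mathcal{P})$, and in particular for $f = e$. Taking cardinalities then gives $w(e') = |S^1 \cap S^2| \leq |\MID(e)| = w(e)$, contradicting $w(e) < w(e')$.

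I do not expect any serious obstacles here: the whole argument rests on two well-known ingredients (the local-exchange characterization of maximum spanning trees supplied in Lemma~\ref{lm:tree_local}, and the cycle property of spanning trees) combined directly with the edge-intersection condition in Definition~\ref{def:junction_tree}. The only small care needed is in phrasing the one-edge swap correctly so that $e$ is identified as the unique edge of $E(\mathcal{T})$ on the $\mathcal{T}$-path between the endpoints of $e'$; this is what lets the junction tree property bound $w(e')$ by $w(e)$ and deliver the contradiction.
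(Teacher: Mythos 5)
Your proposal is correct and follows essentially the same argument as the paper: both invoke Lemma~\ref{lm:tree_local} to obtain a heavier spanning tree at distance one, identify the removed edge $e$ as lying on the $\mathcal{T}$-path between the endpoints of the added edge $e'$, and use the junction-tree condition $S^1 \cap S^2 \subseteq \MID(e)$ to contradict $w(e) < w(e')$ (the paper merely phrases the last step contrapositively, exhibiting an element of $\MID(e')\setminus\MID(e)$).
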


\begin{proof}
Let $\mathcal{T}$ be an arbitrary spanning tree subgraph of $\mathcal{K}_{\mathcal{S}}$ that is not maximum spanning tree. By Lemma~\ref{lm:tree_local}, we know that there exists another spanning tree $\mathcal{T}_1$ such that $w(\mathcal{T}_1) > w(\mathcal{T})$ and $\dist(\mathcal{T}_1, \mathcal{T}) = 1$. Thus, let the edge $e_1 \in E(\mathcal{T}_1) \setminus E(\mathcal{T})$ and the endpoints of $e_1$ to be $S^1$ and $S^2$. Since $\mathcal{T}$ is a tree, there exists a unique path subgraph, $\mathcal{P}$, of $\mathcal{T}$ between $S^1$ and $S^2$. Since $\dist(\mathcal{T}_1, \mathcal{T}) = 1$, then the edge $e \in E(\mathcal{T}) \setminus E(\mathcal{T}_1)$ is in $\mathcal{P}$. Since $w(\mathcal{T}_1) > w(\mathcal{T})$, $w(e_1) > w(e)$. Then, there must exist $x \in \MID(e_1) := (S^1 \cap S^2)$ that is not in $\MID(e)$, which implies that $\mathcal{T}$ is not a junction tree of $\CDC(\mathcal{S})$ by Definition~\ref{def:junction_tree}.
\qed\end{proof}

Next, we want to show that $\CDC(\mathcal{S})$ admits a junction tree if and only if all maximum spanning trees of $\mathcal{K}_{\mathcal{S}}$ are junctions trees of $\CDC(\mathcal{S})$. Lemma~\ref{lm:tree_not_junction} states that if $\mathcal{T}$ is a junction tree of $\CDC(\mathcal{S})$, then any maximum spanning tree subgraph of $\mathcal{K}_{\mathcal{S}}$ such that its distance to $\mathcal{T}$ is one is also a junction tree of $\CDC(\mathcal{S})$. Lemma~\ref{lm:seq_mst} states that, given two arbitrary maximum spanning tree subgraphs of $\mathcal{K}_{\mathcal{S}}$, there exists a sequence of maximum spanning tree subgraphs such that the distance between any two consecutive trees is one.

\begin{lemma} \label{lm:tree_not_junction}
Given $\CDC(\mathcal{S})$ and its intersection graph $\mathcal{K}_{\mathcal{S}}$, let $\mathcal{T}_1$ and $\mathcal{T}_2$ be two maximum spanning tree subgraphs of $\mathcal{K}_{\mathcal{S}}$ such that $\dist(\mathcal{T}_1, \mathcal{T}_2) = 1$. If $\mathcal{T}_1$ is a junction tree of $\CDC(\mathcal{S})$, then $\mathcal{T}_2$ is a junction tree of $\CDC(\mathcal{S})$ as well.
\end{lemma}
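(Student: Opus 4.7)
The plan is to reformulate the junction-tree condition via the \emph{running intersection property} and then push the single edge swap through this characterization. Specifically, a spanning tree $\mathcal{T}$ of $\mathcal{K}_{\mathcal{S}}$ is a junction tree of $\CDC(\mathcal{S})$ if and only if, for every $x \in J$, the vertex set $V_x := \{S \in \mathcal{S} : x \in S\}$ induces a connected subgraph of $\mathcal{T}$. I would first establish this equivalence as a short warm-up lemma using uniqueness of paths in trees; the cleanup it buys us downstream is substantial.

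Next I would fix notation for the swap. Let $e_1 = UV$ be the unique edge in $E(\mathcal{T}_1) \setminus E(\mathcal{T}_2)$ and $e_2 = XY$ the unique edge in $E(\mathcal{T}_2) \setminus E(\mathcal{T}_1)$, so $\mathcal{T}_2 = (\mathcal{T}_1 \setminus \{e_1\}) \cup \{e_2\}$; because both spanning trees are maximum, $|\MID(e_1)| = w(e_1) = w(e_2) = |\MID(e_2)|$. The pivotal step is to strengthen this cardinality equality to the set equality $\MID(e_1) = \MID(e_2)$, and this is where I expect the main difficulty to lie. The fundamental cycle of $e_2$ with respect to $\mathcal{T}_1$ must contain $e_1$ (otherwise deleting $e_1$ and inserting $e_2$ could not produce a tree), so the $\mathcal{T}_1$-path from $X$ to $Y$ traverses $e_1$; applying the junction-tree property of $\mathcal{T}_1$ to the pair $X, Y$ then yields $X \cap Y \subseteq \MID(e_1)$, and the cardinality match forces $\MID(e_2) = X \cap Y = \MID(e_1)$.

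With this equality in hand, I would verify running intersection for $\mathcal{T}_2$ one $x \in J$ at a time via a two-case split. If $\{U, V\} \not\subseteq V_x$, then $e_1 \notin E(\mathcal{T}_1[V_x])$, so $\mathcal{T}_1[V_x] \subseteq \mathcal{T}_2[V_x]$ and connectedness is inherited from $\mathcal{T}_1$. Otherwise $U, V \in V_x$, which via $x \in \MID(e_1) = \MID(e_2)$ also forces $X, Y \in V_x$; removing $e_1$ splits the subtree $\mathcal{T}_1[V_x]$ into two subtrees containing $U$ and $V$ respectively, and because the $\mathcal{T}_1$-paths $X \to U$ and $Y \to V$ stay inside $V_x$ (running intersection in $\mathcal{T}_1$) and avoid $e_1$ (they are the two pieces of the $X$-to-$Y$ path on either side of $e_1$), the vertex $X$ lies in the component of $U$ and $Y$ in the component of $V$, so reinserting $e_2 = XY$ restores connectivity.

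The main obstacle is the set equality $\MID(e_1) = \MID(e_2)$: this is the only point in the argument where the maximality of both spanning trees is used, and it is what actually transports junction-tree structure across the edge swap. Everything following it is routine tree-surgery bookkeeping, which I would present as the two-case analysis above.
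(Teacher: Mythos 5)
Your proposal is correct, and its pivotal step coincides with the paper's: both proofs observe that the fundamental cycle of $e_2$ with respect to $\mathcal{T}_1$ must contain $e_1$, use the junction-tree property of $\mathcal{T}_1$ applied to the endpoints of $e_2$ to get $\MID(e_2)\subseteq\MID(e_1)$, and then invoke maximality to upgrade this to the set equality $\MID(e_1)=\MID(e_2)$ (the paper uses maximality of $\mathcal{T}_2$ to get $w(e_2)\ge w(e)$ for every cycle edge $e$, and in fact derives the stronger statement $\MID(e_2)=\MID(e)$ along the whole cycle; you only need the equality for the swapped pair, extracted from $w(e_1)=w(e_2)$). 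Where you genuinely diverge is in how the junction structure is transported to $\mathcal{T}_2$: the paper argues pairwise, taking arbitrary $S,S'$ separated by the cut of $e_1$, rerouting their $\mathcal{T}_1$-path through the arc of the fundamental cycle containing $e_2$, and checking that the new edges and vertices all absorb $S\cap S'$ via the cycle-wide middle-set equality; you instead prove as a warm-up the running-intersection characterization (a spanning tree is a junction tree iff each $V_x=\{S\in\mathcal{S}:x\in S\}$ induces a connected subtree) and then verify connectivity of $\mathcal{T}_2[V_x]$ per element $x$ with a clean two-case split. Your route buys a lighter verification at the cost of an extra (standard, easily proved) equivalence lemma not stated in the paper, and it avoids the explicit path-surgery bookkeeping; the paper's route stays entirely within its own Definition~\ref{def:junction_tree} but needs the full-cycle middle-set equality to make the rerouted path work. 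Both are complete and correct; just make sure, when writing yours up, to state the WLOG labeling so that $X$ falls on the $U$-side and $Y$ on the $V$-side of the cut, which is what your case-2 argument implicitly uses.
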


\begin{proof}
Let $e_1$ and $e_2$ be the edges in $(E(\mathcal{T}_1) \setminus E(\mathcal{T}_2))$ and $(E(\mathcal{T}_2) \setminus E(\mathcal{T}_1))$, respectively. By adding $e_2$ to $\mathcal{T}_1$, we can obtain a fundamental cycle $\mathcal{C}$, which contains both edges $e_1$ and $e_2$. Since $\mathcal{T}_1$ is a junction tree of $\CDC(\mathcal{S})$, then $\MID(e_2) \subseteq \MID(e)$ for any $e \in (E(\mathcal{C}) \setminus \{e_2\})$. Since $\mathcal{T}_2$ is also a maximum spanning tree, then $w(e_2) \geq w(e)$ for any $e \in (E(\mathcal{C}) \setminus \{e_2\})$. (Otherwise, by removing $e_2$ from and adding $e$ to $\mathcal{T}_2$, we can obtain a new spanning tree with larger weights which is a contradiction.) Hence, $\MID(e_2) = \MID(e)$ for any $e \in (E(\mathcal{C}) \setminus \{e_2\})$. Since $e_1 \in \mathcal{C}$, $\MID(e_1) = \MID(e_2) \subseteq S$ for any vertex $S \in \mathcal{C}$.

By removing $e_1$ from $\mathcal{T}_1$, we can partition the vertices into sets $\mathcal{S}_1$ and $\mathcal{S}_2$. For arbitrary distinct vertices $S$ and $S'$ in $\mathcal{S}_1$, since the path between $S$ and $S'$ in $\mathcal{T}_1$ and the path between those two vertices in $\mathcal{T}_2$ are identical, there exists a path between $S$ and $S'$ in $\mathcal{T}_2$ satisfying the second property of the junction tree. It follows the same manner for arbitrary distinct vertices $S$ and $S'$ in $\mathcal{S}_2$. For arbitrary $S \in \mathcal{S}_1$ and $S' \in \mathcal{S}_2$, let $\mathcal{P}_1$ be the unique path between $S$ and $S'$ in $\mathcal{T}_1$. We construct the path $\mathcal{P}_2$ as follows: Let $S^1$ and $S^2$ be the closest vertices to $S$ and $S'$ in $\mathcal{P}_1$ respectively such that $S^1$ and $S^2$ are in $\mathcal{C}$. Then, we can construct a path $\mathcal{P}_2$ from $S$ to $S^1$ along $\mathcal{P}_1$, $S^1$ to $S^2$ along the arc of $\mathcal{C}$ containing $e_2$, and then $S^2$ to $S'$ along $\mathcal{P}_1$. Then, $\mathcal{P}_2$ is the unique path in $\mathcal{T}_2$ between $S$ and $S'$. Then, the edges $E(\mathcal{P}_2) \setminus E(\mathcal{P}_1) \subseteq E(\mathcal{C})$. Thus, $\MID(e) = \MID(e_1)$ for any $e \in E(\mathcal{P}_2) \setminus E(\mathcal{P}_1)$. We can complete the proof since $e_1 \in E(\mathcal{P}_1)$ and $(S \cap S') \subseteq \MID(e_1)$.
\qed\end{proof}

\begin{lemma}[Corollary 1~\cite{kravitz2007two}]\label{lm:mst_exchange}
Take two distinct maximum spanning trees $S$ and $T$ of a connected graph $G$. Let $s \in E(S) \setminus E(T)$. Then, there exists an edge $t \in E(T) \setminus E(S)$ such that both $S + t - s$ and $T + s - t$ are maximum spanning trees.
\end{lemma}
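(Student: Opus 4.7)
The plan is to prove Lemma~\ref{lm:mst_exchange} by a standard matroid-style strong exchange argument, combining the fundamental cut of $s$ in $S$ with the fundamental cycle of $s$ in $T$. First I would write $s = uv$ and note that removing $s$ from $S$ partitions $V(G)$ into two components with vertex sets $V_1 \ni u$ and $V_2 \ni v$; this is the fundamental cut of $s$ with respect to $S$, and by construction $s$ is the unique edge of $S$ crossing it.

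Next I would consider the unique $u$--$v$ path $P$ in $T$, which exists because $T$ is a spanning tree and $s \notin E(T)$. Since $u \in V_1$ and $v \in V_2$, the path $P$ must traverse the cut $(V_1, V_2)$ at least once, so there is an edge $t \in E(P)$ with one endpoint in $V_1$ and the other in $V_2$. Because $s$ is the only edge of $S$ crossing that cut, $t \notin E(S)$, and hence $t \in E(T) \setminus E(S)$ as required.

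Then I would verify both exchanges produce spanning trees. The edge $t$ reconnects the two components of $S \setminus \{s\}$, so $S + t - s$ is a spanning tree of $G$; and $t$ lies on the unique cycle of $T + s$ (namely $E(P) \cup \{s\}$), so $T + s - t$ is also a spanning tree of $G$. For the weights, maximality of $S$ forces $w(s) \geq w(t)$, as otherwise $S + t - s$ would be a strictly heavier spanning tree; symmetrically, maximality of $T$ forces $w(t) \geq w(s)$. Hence $w(s) = w(t)$, and both $S + t - s$ and $T + s - t$ attain total weight $w(S) = w(T)$, so both are maximum spanning trees.

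The only real subtlety is the selection of $t$, and it is settled immediately by the observation that the fundamental cut of $s$ in $S$ contains exactly one edge of $S$ (namely $s$ itself), which automatically places any other cut-crossing edge outside $E(S)$. After that, the equality $w(s) = w(t)$ is forced by the two maximality conditions. I do not anticipate any serious obstacle, since this is essentially the classical symmetric basis-exchange property for spanning trees viewed as bases of the graphic matroid.
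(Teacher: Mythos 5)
Your proof is correct. Note that the paper does not prove this lemma at all: it is imported verbatim as Corollary~1 of the cited reference (Kravitz), so there is no in-paper argument to compare against. Your cut-and-cycle argument is the standard way to establish this symmetric exchange property, and every step checks out: the edge $t$ you select lies simultaneously in the fundamental cut of $s$ with respect to $S$ (so $S+t-s$ is a spanning tree) and on the fundamental cycle of $s$ with respect to $T$ (so $T+s-t$ is a spanning tree), the identification $t \notin E(S)$ follows because $t \neq s$ while $s$ is the unique $S$-edge across the cut, and the two maximality inequalities $w(t) \leq w(s)$ and $w(s) \leq w(t)$ force $w(s) = w(t)$, so both exchanged trees attain the maximum weight $w(S) = w(T)$. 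What your argument buys is a short, self-contained justification of the lemma (essentially the symmetric basis-exchange property of the graphic matroid, sharpened to maximum-weight bases), which would let the paper avoid leaning on the external reference.
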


\begin{lemma}\label{lm:seq_mst}
Given a simple graph $G$ and two arbitrary maximum spanning tree subgraphs: $T$ and $T'$, there exists a sequence of maximum spanning tree subgraphs $\{T_i\}_{i=0}^k$ such that $T_0=T$, $T_k=T'$, and $\dist(T_{i-1}, T_i) = 1$ for all $i \in \llbracket k\rrbracket $.
\end{lemma}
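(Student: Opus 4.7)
The plan is to proceed by induction on the distance $d := \dist(T, T')$. The base case $d = 0$ is trivial: then $T = T'$ and we may take the one-element sequence $T_0 = T$, so that $k = 0$ and the statement holds vacuously.

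For the inductive step, I would assume the result for every pair of maximum spanning trees at distance less than $d$, and then consider $T, T'$ with $\dist(T, T') = d \geq 1$. The idea is to use Lemma~\ref{lm:mst_exchange} to take a single step from $T$ toward $T'$ while staying inside the set of maximum spanning trees. Concretely, pick any edge $s \in E(T) \setminus E(T')$ (which exists since $d \geq 1$). By Lemma~\ref{lm:mst_exchange} applied to $T$ and $T'$, there is an edge $t \in E(T') \setminus E(T)$ such that $T_1 := T + t - s$ is a maximum spanning tree of $G$.

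The remaining step is to verify that $\dist(T_1, T') = d - 1$, so that the induction hypothesis applies to the pair $T_1, T'$. This is a direct edge-count: since $s \notin E(T')$ and $t \in E(T')$,
\begin{equation*}
E(T_1) \setminus E(T') = \bigl((E(T) \setminus \{s\}) \cup \{t\}\bigr) \setminus E(T') = (E(T) \setminus E(T')) \setminus \{s\},
\end{equation*}
so $\dist(T_1, T') = d - 1$. By the induction hypothesis there is a sequence of maximum spanning trees $T_1 = T'_0, T'_1, \dots, T'_{d-1} = T'$ with consecutive distances equal to one; prepending $T$ yields the desired sequence from $T$ to $T'$ of length $d$ in which every consecutive pair has distance one.

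The main obstacle is really just invoking Lemma~\ref{lm:mst_exchange} correctly to obtain an intermediate maximum spanning tree at distance exactly one from $T$ and at distance $d-1$ from $T'$; without that exchange property one could swap edges and reduce distance to $T'$ but possibly drop the weight, leaving the class of maximum spanning trees. Once the exchange lemma is available, the induction runs without further complication.
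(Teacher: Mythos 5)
Your proof is correct and takes the same route as the paper: the paper simply declares the lemma ``a direct result'' of Lemma~\ref{lm:mst_exchange}, and your induction on $\dist(T,T')$, using the exchange lemma to produce $T_1 = T + t - s$ at distance one from $T$ and distance $d-1$ from $T'$, is precisely the argument the paper leaves implicit.
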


\begin{proof}
It is a direct result from Lemma~\ref{lm:mst_exchange}.
\qed\end{proof}

Note that $k$ might be 0, in which case the statement in Lemma~\ref{lm:seq_mst} holds trivially.

\begin{proposition} \label{prop:mst_junction}
Given $\CDC(\mathcal{S})$ and its intersection graph $\mathcal{K}_{\mathcal{S}}$, $\CDC(\mathcal{S})$ admits a junction tree if and only if all maximum spanning trees of $\mathcal{K}_{\mathcal{S}}$ are junctions trees of $\CDC(\mathcal{S})$.
\end{proposition}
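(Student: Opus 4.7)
The plan is to split the biconditional into its two directions. The backward direction is essentially immediate: the intersection graph $\mathcal{K}_{\mathcal{S}}$ is complete by Definition~\ref{def:intersection}, hence connected, so at least one maximum spanning tree exists. If every maximum spanning tree of $\mathcal{K}_{\mathcal{S}}$ is a junction tree of $\CDC(\mathcal{S})$, then $\CDC(\mathcal{S})$ admits a junction tree.

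For the forward direction, I would start from an arbitrary junction tree $\mathcal{T}$ of $\CDC(\mathcal{S})$ and an arbitrary maximum spanning tree $\mathcal{T}'$ of $\mathcal{K}_{\mathcal{S}}$, with the goal of showing $\mathcal{T}'$ is a junction tree of $\CDC(\mathcal{S})$. The first step is to apply Proposition~\ref{prop:tree_mst}: a junction tree cannot be a non-maximum spanning tree, so $\mathcal{T}$ itself is a maximum spanning tree of $\mathcal{K}_{\mathcal{S}}$. Now both $\mathcal{T}$ and $\mathcal{T}'$ are maximum spanning trees of the same graph.

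The next step is to invoke Lemma~\ref{lm:seq_mst} to produce a finite sequence of maximum spanning trees $\mathcal{T} = \mathcal{T}_0, \mathcal{T}_1, \ldots, \mathcal{T}_k = \mathcal{T}'$ with $\dist(\mathcal{T}_{i-1}, \mathcal{T}_i) = 1$ for every $i \in \llbracket k \rrbracket$. Then I would run a straightforward induction on $i$: the base case $i = 0$ holds because $\mathcal{T}_0 = \mathcal{T}$ is a junction tree by assumption, and the inductive step is exactly the content of Lemma~\ref{lm:tree_not_junction}, since $\mathcal{T}_{i-1}$ and $\mathcal{T}_i$ are adjacent maximum spanning trees and $\mathcal{T}_{i-1}$ is a junction tree by the inductive hypothesis. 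Setting $i = k$ yields that $\mathcal{T}'$ is a junction tree.

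The main obstacle has already been overcome by the earlier lemmas; the role of this proposition is mostly to assemble them. The only mildly delicate point is making sure the case $k = 0$ (i.e., $\mathcal{T} = \mathcal{T}'$) is handled, which is noted right after Lemma~\ref{lm:seq_mst}, so the induction degenerates correctly. No additional combinatorial arguments about middle sets or cycles are required beyond those already encapsulated in Lemmas~\ref{lm:tree_not_junction} and~\ref{lm:seq_mst}.
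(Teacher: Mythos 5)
Your proposal is correct and follows essentially the same route as the paper: both directions rest on Proposition~\ref{prop:tree_mst} (to conclude the given junction tree is itself a maximum spanning tree), Lemma~\ref{lm:seq_mst} (the exchange sequence), and Lemma~\ref{lm:tree_not_junction} (the one-step transfer), assembled by the same induction. Your explicit handling of the $k=0$ case and the explicit statement that the assumed junction tree must be a maximum spanning tree are minor clarifications of steps the paper leaves implicit, not a different argument.
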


\begin{proof}
The backward direction is trivial. If all maximum spanning trees of $\mathcal{K}_{\mathcal{S}}$ are junction trees, then $\CDC(\mathcal{S})$ admits a junction tree.

For the forward direction, suppose that $\CDC(\mathcal{S})$ admits a junction tree. Then, by Proposition~\ref{prop:tree_mst}, we know that there exists a maximum spanning tree, $\mathcal{T}$, of $\mathcal{K}_{\mathcal{S}}$ that is a junction tree. Let $\mathcal{T}'$ be an arbitrary maximum spanning tree of $\mathcal{K}_{\mathcal{S}}$. By Lemma~\ref{lm:seq_mst}, there exists a sequence of maximum spanning tree subgraphs $\{\mathcal{T}_i\}_{i=0}^k$ such that $\mathcal{T}_0=\mathcal{T}$, $\mathcal{T}_k=\mathcal{T}'$, and $\dist(\mathcal{T}_{i-1}, \mathcal{T}_i) = 1$ for all $i \in \llbracket k\rrbracket $. Since $\mathcal{T}$ is a junction tree, $\mathcal{T}_i$ is a junction tree for all $i \in \llbracket k\rrbracket $ by Lemma~\ref{lm:tree_not_junction}. Hence, $\mathcal{T}'$ is a junction tree of $\CDC(\mathcal{S})$ and all maximum spanning trees of $\mathcal{K}_{\mathcal{S}}$ are junction trees of $\CDC(\mathcal{S})$.
\qed\end{proof}

\begin{proposition}\label{prop:check_junction_tree}
A spanning tree $\mathcal{T}$ of $\mathcal{K}_{\mathcal{S}}$ is a junction tree of $\CDC(\mathcal{S})$ if and only if each edge $e$ in $\mathcal{T}$ separates vertices into $\mathcal{S}_1$ and $\mathcal{S}_2$ such that $\left(\bigcup_{S \in \mathcal{S}_1} S \right) \cap \left(\bigcup_{S \in \mathcal{S}_2} S \right) \subseteq \MID(e)$.
\end{proposition}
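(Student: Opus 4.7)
The plan is to prove both directions by directly unpacking Definition~\ref{def:junction_tree} and exploiting the uniqueness of paths in a tree.

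For the forward direction, I would assume $\mathcal{T}$ is a junction tree of $\CDC(\mathcal{S})$ and fix an arbitrary edge $e \in E(\mathcal{T})$. Removing $e$ from the tree yields exactly two connected components whose vertex sets I call $\mathcal{S}_1$ and $\mathcal{S}_2$. Given any $x \in \left(\bigcup_{S \in \mathcal{S}_1} S\right) \cap \left(\bigcup_{S \in \mathcal{S}_2} S\right)$, pick witnesses $A \in \mathcal{S}_1$ and $B \in \mathcal{S}_2$ with $x \in A \cap B$. The unique $A$--$B$ path $\mathcal{P}$ in $\mathcal{T}$ must traverse $e$ since $A$ and $B$ lie in different components of $\mathcal{T} \setminus e$. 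The junction-tree property then forces $A \cap B \subseteq \MID(e)$, so $x \in \MID(e)$, establishing the required inclusion.

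For the backward direction, I would assume the separation property for every edge and verify both bullets of Definition~\ref{def:junction_tree} on an arbitrary $S^1, S^2 \in \mathcal{S}$ with unique path $\mathcal{P}$ between them in $\mathcal{T}$. First I would handle the edge condition: for any $e \in E(\mathcal{P})$, deleting $e$ separates $S^1$ from $S^2$, placing each in one of the two components $\mathcal{S}_1, \mathcal{S}_2$; hence $S^1 \cap S^2 \subseteq \left(\bigcup_{S \in \mathcal{S}_1} S\right) \cap \left(\bigcup_{S \in \mathcal{S}_2} S\right) \subseteq \MID(e)$ by hypothesis. Then I would derive the vertex condition as a consequence: for any $S \in V(\mathcal{P})$, if $S \in \{S^1, S^2\}$ the containment $S^1 \cap S^2 \subseteq S$ is immediate; otherwise $S$ is an interior vertex of $\mathcal{P}$ and therefore has an incident edge $e' = SS''$ on $\mathcal{P}$, and the edge condition just established gives $S^1 \cap S^2 \subseteq \MID(e') = S \cap S'' \subseteq S$.

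The main obstacle is really just the vertex condition in the backward direction, since the separation hypothesis is phrased only for edges; the key observation that unlocks it is that $\MID(e') \subseteq S$ whenever $S$ is an endpoint of $e'$, which lets the edge bound propagate to every vertex on the path. Aside from this bookkeeping, both directions are short arguments about which side of a deleted edge the endpoints of a path lie on, so no further machinery beyond the uniqueness of tree paths is required.
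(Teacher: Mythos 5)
Your proposal is correct and follows essentially the same route as the paper: both directions rest on the facts that each tree edge is a cut and that the unique path between vertices in different components of $\mathcal{T} \setminus e$ must use $e$, combined with Definition~\ref{def:junction_tree}. The only difference is cosmetic — the paper argues the backward direction by contrapositive and leaves the vertex condition implicit, whereas you prove it directly and explicitly note that $S^1 \cap S^2 \subseteq \MID(e') \subseteq S$ for an incident path edge $e'$, which is a slightly more complete write-up of the same argument.
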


\begin{proof}
In the forward direction, let $\mathcal{T}$ be a junction tree. Given an edge $e$ in $\mathcal{T}$, the edge $e$ can separate the vertices into $\mathcal{S}_1$ and $\mathcal{S}_2$. For any $S^1 \in \mathcal{S}_1$ and $S^2 \in \mathcal{S}_2$, the edge $e$ is on the unique path between $S^1$ and $S^2$ in $\mathcal{T}$. Thus, by the definition of junction tree in Definition~\ref{def:junction_tree}, $S^1 \cap S^2 \subseteq \MID(e)$. Hence, $\left(\bigcup_{S \in \mathcal{S}_1} S\right) \cap \left(\bigcup_{S \in \mathcal{S}_2} S\right) \subseteq \MID(e)$ for any edge $e$ in $\mathcal{T}$.

In the backward direction, we prove by the contrapositive statement. Assume that $\mathcal{T}$ is not a junction tree of $\CDC(\mathcal{S})$. Then, there exists $S^1, S^2 \in \mathcal{S}$ and an edge $e$ on the path between them such that $S^1 \cap S^2 \not\subseteq \MID(e)$. Since $\mathcal{T}_\mathcal{S}$ is a tree, then $e$ can separate the vertices into $\mathcal{S}_1$ and $\mathcal{S}_2$ where $S^1 \in \mathcal{S}_1$ and $S^2 \in \mathcal{S}_2$. Hence, $\left(\bigcup_{S \in \mathcal{S}_1} S\right) \cap \left(\bigcup_{S \in \mathcal{S}_2} S\right) \not\subseteq \MID(e)$.
\qed\end{proof}

\section{A Heuristic for Biclique Covers of Combinatorial Disjunctive Constraints with Junction Trees}

In this section, we introduce a heuristic designed to produce small biclique covers of $G^c_{\mathcal{S}}$. The algorithm is composed of two parts: separation and merging. We focus on $\CDC(\mathcal{S})$ with a given junction tree $\mathcal{T}_{\mathcal{S}}$. In this way, $\CDC(\mathcal{S})$ is guaranteed to be pairwise IB-representable. The motivation for designing a ``divide and conquer" algorithm is the simple observation that any biclique can naturally divide the minimum biclique cover problem into two subproblems, as shown in Proposition~\ref{prop:biclique_sep}.

\begin{proposition} \label{prop:biclique_sep}
Given a graph $G := (V, \mathcal{E})$, let $\{A, B\}$ be an arbitrary biclique subgraph of $G$. Then, $\mathcal{E} = E(\{A, B\}) \cup E(G_A) \cup E(G_B)$, where $G_A := G(V \setminus A)$ and $G_B := G(V \setminus B)$. 
\end{proposition}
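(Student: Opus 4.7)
The plan is to verify the set equality $\mathcal{E} = E(\{A,B\}) \cup E(G_A) \cup E(G_B)$ by proving the two inclusions separately. The reverse inclusion is essentially immediate from the definitions, while the forward inclusion reduces to a short case analysis on how each edge meets $A$ and $B$.

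For the reverse direction, I would observe that $\{A,B\}$ being a biclique subgraph of $G$ forces $E(\{A,B\}) \subseteq \mathcal{E}$ directly, and that $G_A$ and $G_B$ are (induced) subgraphs of $G$, so their edge sets are subsets of $\mathcal{E}$ as well. Taking unions yields the inclusion $E(\{A,B\}) \cup E(G_A) \cup E(G_B) \subseteq \mathcal{E}$ with no further work.

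For the forward direction, I would fix an arbitrary edge $e = uv \in \mathcal{E}$ and split on whether each of $\{u,v\} \cap A$ and $\{u,v\} \cap B$ is empty. If $\{u,v\} \cap A = \emptyset$, then $u,v \in V \setminus A$, so $e \in E(G_A)$; the symmetric case $\{u,v\} \cap B = \emptyset$ gives $e \in E(G_B)$. Otherwise at least one endpoint of $e$ lies in $A$ and at least one in $B$. Since $\{A,B\}$ is a bipartite graph (so $A \cap B = \emptyset$), the two endpoints must be partitioned with one in $A$ and the other in $B$; as $\{A,B\}$ is \emph{complete} bipartite, every such pair is an edge of $\{A,B\}$, so $e \in E(\{A,B\})$.

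There is essentially no obstacle here beyond recognizing the implicit assumption $A \cap B = \emptyset$ built into the biclique definition and keeping the case split exhaustive. The statement claims only a cover of $\mathcal{E}$ by the three pieces, not a partition, so no overlap analysis is required. The real content of the proposition is structural: it says that after ``using up'' the biclique $\{A,B\}$, the remaining edges to be covered all sit inside $G_A$ or $G_B$, which is exactly what a divide-and-conquer heuristic for minimum biclique cover needs in order to recurse.
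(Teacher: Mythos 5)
Your proof is correct and follows essentially the same route as the paper's: the reverse inclusion from the subgraph relations, and a case split on an arbitrary edge $uv \in \mathcal{E}$ (both endpoints avoid $A$, both avoid $B$, or one in each part, using completeness of the biclique). Your explicit remark that $A \cap B = \emptyset$ makes the case split exhaustive is a minor added precision over the paper's phrasing, but not a different argument.
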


\begin{proof}
Since $\{A, B\}$, $G_A$, $G_B$ are all subgraphs of $G$, then $E(\{A, B\}) \cup E(G_A) \cup E(G_B) \subseteq \mathcal{E}$.

Given an arbitrary $uv \in \mathcal{E}$. If $u, v \in (V \setminus B)$, $uv \in E(G_B)$. Similarly, if $u, v \in (V \setminus A)$, $uv \in E(G_A)$. If $u,v$ are in $A$ and $B$, respectively, then $uv \in E(\{A, B\})$. Thus, $\mathcal{E} \subseteq E(\{A, B\}) \cup E(G_A) \cup E(G_B)$.
\qed\end{proof}

Then, we can show that any edge in the junction tree can help us to find a biclique in the conflict graph of $\CDC(\mathcal{S})$.

\begin{proposition} \label{prop:separation_tree}
Given $\CDC(\mathcal{S})$ and one of its junction trees $\mathcal{T}_{\mathcal{S}}$, any edge $e$ of $\mathcal{T}_{\mathcal{S}}$ can partition $\mathcal{S}$ into $\mathcal{S}_1$ and $\mathcal{S}_2$ such that $\{\bigcup_{S \in \mathcal{S}_1} S \setminus \MID(e), \bigcup_{S \in \mathcal{S}_2} S \setminus \MID(e) \}$ is a biclique of the conflict graph $G^c_{\mathcal{S}}$.
\end{proposition}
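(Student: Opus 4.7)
The plan is to exploit Proposition~\ref{prop:check_junction_tree}, which tells us that removing any edge $e$ of a junction tree partitions the vertex set $\mathcal{S}$ into two parts $\mathcal{S}_1$ and $\mathcal{S}_2$ whose unions overlap only in $\MID(e)$. This disjointness-after-removal is exactly what is needed for the two sides of the claimed biclique.

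First I would let $\mathcal{S}_1, \mathcal{S}_2$ be the vertex sets of the two connected components of $\mathcal{T}_{\mathcal{S}} \setminus e$, and set $A := \bigl(\bigcup_{S \in \mathcal{S}_1} S\bigr) \setminus \MID(e)$ and $B := \bigl(\bigcup_{S \in \mathcal{S}_2} S\bigr) \setminus \MID(e)$. By Proposition~\ref{prop:check_junction_tree}, $\bigl(\bigcup_{S \in \mathcal{S}_1} S\bigr) \cap \bigl(\bigcup_{S \in \mathcal{S}_2} S\bigr) \subseteq \MID(e)$, so $A$ and $B$ are disjoint, which is the basic requirement for $\{A,B\}$ to define a biclique.

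The core step is then to verify that every pair $u \in A$, $v \in B$ forms an infeasible set in $\CDC(\mathcal{S})$, i.e.\ constitutes an edge of $G^c_{\mathcal{S}}$. I would argue by contradiction: suppose $\{u,v\} \subseteq T$ for some $T \in \mathcal{S}$. Without loss of generality $T \in \mathcal{S}_1$, so $v \in T \subseteq \bigcup_{S \in \mathcal{S}_1} S$; combined with $v \in B \subseteq \bigcup_{S \in \mathcal{S}_2} S$, this places $v$ in the intersection, hence in $\MID(e)$ by Proposition~\ref{prop:check_junction_tree}. But $v \in B$ forces $v \notin \MID(e)$, a contradiction. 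The case $T \in \mathcal{S}_2$ is symmetric with the roles of $u$ and $v$ swapped. Consequently every $uv$ with $u \in A, v \in B$ is an edge of $G^c_{\mathcal{S}}$, so $\{A,B\}$ is a biclique subgraph.

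There is no substantial obstacle here; the statement is essentially a direct translation of the junction-tree separation property of Proposition~\ref{prop:check_junction_tree} into the language of conflict graphs, and the only subtlety is making sure that the removal of $\MID(e)$ from both unions simultaneously enforces disjointness of $A$ and $B$ and rules out the existence of any $T \in \mathcal{S}$ containing both an element of $A$ and an element of $B$.
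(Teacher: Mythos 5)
Your proof is correct and takes essentially the same route as the paper: the crucial fact is that removing $e$ splits $\mathcal{S}$ into $\mathcal{S}_1,\mathcal{S}_2$ with $\bigl(\bigcup_{S\in\mathcal{S}_1}S\bigr)\cap\bigl(\bigcup_{S\in\mathcal{S}_2}S\bigr)\subseteq\MID(e)$, which the paper re-derives inline from the junction-tree definition (showing $S^1\cap S^2\subseteq\MID(e)$ for every $S^1\in\mathcal{S}_1$, $S^2\in\mathcal{S}_2$), whereas you cite the forward direction of Proposition~\ref{prop:check_junction_tree}, which encapsulates exactly that statement. The subsequent argument that no $T\in\mathcal{S}$ can contain both $u\in A$ and $v\in B$ is the same contradiction the paper uses, so the two proofs are substantively identical.
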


\begin{proof}
Since every edge in a tree is a cut, $e$ can partition $\mathcal{S}$ into two sets of vertices, $\mathcal{S}_1$ and $\mathcal{S}_2$, in $\mathcal{T}_{\mathcal{S}}$. Given an arbitrary $u \in \bigcup_{S \in \mathcal{S}_1} S \setminus \MID(e)$ and $v \in \bigcup_{S \in \mathcal{S}_2} S \setminus \MID(e)$, we want to prove that $uv$ is an edge in the conflict graph $G^c_{\mathcal{S}}$. Equivalently, we need to prove that $\{u, v\} \not\subseteq S$ for any $S \in \mathcal{S}$.

We know that $v \in S^2$ for some $S^2 \in \mathcal{S}_2$ and $v \not\in \MID(e)$. Given an arbitrary $S^1 \in \mathcal{S}_1$, since there exists only one path between $S^1$ and $S^2$ in the junction tree $\mathcal{T}_{\mathcal{S}}$, then $(S^1 \cap S^2) \subseteq \MID(e)$. Since $v \not\in \MID(e)$, then $v \not\in S^1$. Thus, $v \not\in \bigcup_{S \in \mathcal{S}_1} S$. Similarly, $u \not\in \bigcup_{S \in \mathcal{S}_2} S$, which implies that there does not exist $S \in \mathcal{S}$ such that $\{u, v\} \in S$.
\qed\end{proof}

In Proposition~\ref{prop:sep0} and Proposition~\ref{prop:separation}, we show that we only need to do the ``divide and conquer" on the junction tree to find a biclique cover of the conflict graph of $\CDC(\mathcal{S})$, which leads us to a separation subroutine in Algorithm~\ref{alg:biclique_sep}.

\begin{proposition} \label{prop:sep0}
Given $\CDC(\mathcal{S})$ and one of its junction trees $\mathcal{T}_{\mathcal{S}}$, any edge $e$ of $\mathcal{T}_{\mathcal{S}}$ can partition vertices of $\mathcal{T}_{\mathcal{S}}$ into $\mathcal{S}_1$ and $\mathcal{S}_2$. Let $G^c_{\mathcal{S}_1}$ be the conflict graph of $\CDC(\mathcal{S}_1)$ and $G^c_{\mathcal{S}_2}$ be the conflict graph of $\CDC(\mathcal{S}_2)$. Then, the union of $\left\{\bigcup_{S \in \mathcal{S}_1} S \setminus \MID(e), \bigcup_{S \in \mathcal{S}_2} S \setminus \MID(e)\right\}$ and the biclique covers of $G^c_{\mathcal{S}_1}$ and $G^c_{\mathcal{S}_2}$ is a biclique cover of $G^c_{\mathcal{S}}$.
\end{proposition}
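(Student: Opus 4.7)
The plan is to invoke Proposition~\ref{prop:separation_tree} — setting $J_i := \bigcup_{S \in \mathcal{S}_i} S$, it already gives that $\{J_1 \setminus \MID(e), J_2 \setminus \MID(e)\}$ is a biclique subgraph of $G^c_{\mathcal{S}}$ — and then to check two things: (a) any biclique supplied by the sub-covers of $G^c_{\mathcal{S}_1}$ or $G^c_{\mathcal{S}_2}$ is still a biclique of $G^c_{\mathcal{S}}$, and (b) every edge of $G^c_{\mathcal{S}}$ is covered by some biclique in the union. Writing $e = S^1 S^2$ with $S^1 \in \mathcal{S}_1$ and $S^2 \in \mathcal{S}_2$, the preliminary observation I would record is $\MID(e) \subseteq J_1 \cap J_2$, which follows from $\MID(e) \subseteq S^i \subseteq J_i$ for $i=1,2$.

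For (a), the key claim is $E(G^c_{\mathcal{S}_i}) \subseteq E(G^c_{\mathcal{S}})$, i.e., a pair infeasible for the sub-collection $\mathcal{S}_i$ is infeasible for $\mathcal{S}$. I would argue by contradiction: if $a,b \in J_1$ are infeasible for $\mathcal{S}_1$ but some $S \in \mathcal{S}_2$ contains both, then applying the junction tree property to $S$ against sets in $\mathcal{S}_1$ that contain $a$ and $b$ respectively forces $a, b \in \MID(e) \subseteq S^1 \in \mathcal{S}_1$, contradicting the assumed infeasibility. The argument for $\mathcal{S}_2$ is symmetric. This confirms that every biclique in the sub-covers is automatically a biclique of the larger graph, so listing them inside a cover of $G^c_{\mathcal{S}}$ is legitimate.

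For (b), I would take an arbitrary $uv \in E(G^c_{\mathcal{S}})$ and case-split on whether $\{u,v\}$ lies inside a single $J_i$. If both $u,v \in J_1$, the monotonicity $\mathcal{S}_1 \subseteq \mathcal{S}$ immediately gives $uv \in E(G^c_{\mathcal{S}_1})$ and the sub-cover handles it; the $J_2$ case is symmetric. Otherwise, since $J = J_1 \cup J_2$ and the pair fails to fit in a single side, we are forced into $u \in J_1 \setminus J_2$ and $v \in J_2 \setminus J_1$ (up to swapping), and the preliminary inclusion $\MID(e) \subseteq J_1 \cap J_2$ then places $u \in J_1 \setminus \MID(e)$ and $v \in J_2 \setminus \MID(e)$, so the edge sits inside the biclique from Proposition~\ref{prop:separation_tree}. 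The main obstacle is the contradiction in step (a) — that is the one place where the junction tree property contributes beyond what Proposition~\ref{prop:separation_tree} has already packaged; the rest is routine bookkeeping on the three sets $J_1$, $J_2$, and $\MID(e)$.
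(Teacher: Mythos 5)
Your proof is correct. The case analysis in (b) is exhaustive (if the endpoints do not both lie in $J_1$ nor both in $J_2$, they must split as $u \in J_1 \setminus J_2$, $v \in J_2 \setminus J_1$, and $\MID(e) \subseteq J_1 \cap J_2$ pushes them into the cross biclique), and the contradiction in (a) is exactly the right use of the junction-tree property: the cut edge $e$ lies on the path from any $S_a \in \mathcal{S}_1$ to any $S \in \mathcal{S}_2$, so $a \in S_a \cap S \subseteq \MID(e)$ and likewise for $b$, whence $\{a,b\} \subseteq \MID(e) \subseteq S^1 \in \mathcal{S}_1$. Structurally, though, you take a somewhat different route from the paper. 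The paper first invokes its Proposition on biclique separation (Proposition~\ref{prop:biclique_sep}) to write $E(G^c_{\mathcal{S}}) = E(\{A,B\}) \cup E(G^c_A) \cup E(G^c_B)$ with $G^c_A, G^c_B$ the induced subgraphs obtained by deleting $A$ and $B$, and then proves the stronger identity that $G^c_A$ \emph{equals} the conflict graph $G^c_{\mathcal{S}_2}$ (both vertex sets, via $\left(\bigcup_{S \in \mathcal{S}_1} S\right) \cap \left(\bigcup_{S \in \mathcal{S}_2} S\right) = \MID(e)$, and both edge sets, the nontrivial inclusion being the same junction-tree contradiction you use). You avoid the induced-subgraph machinery entirely and instead verify the two defining conditions of a biclique cover directly: $E(G^c_{\mathcal{S}_i}) \subseteq E(G^c_{\mathcal{S}})$ so the sub-cover bicliques remain bicliques upstairs, and a hand case split for edge coverage. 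Your version is a bit more elementary and only needs the one-sided edge inclusion; the paper's version buys the cleaner structural fact $G^c_A = G^c_{\mathcal{S}_2}$, $G^c_B = G^c_{\mathcal{S}_1}$, which dovetails with Proposition~\ref{prop:separation} and the recursive design of Algorithm~\ref{alg:biclique_sep}. Either argument suffices for the stated proposition.
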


\begin{proof}
Since $\mathcal{T}_S$ is a tree, then each edge $e$ of $\mathcal{T}_S$ is an edge cut to partition the vertices into $\mathcal{S}_1$ and $\mathcal{S}_2$. By Proposition~\ref{prop:separation_tree}, we know that $\{A:= \bigcup_{S \in \mathcal{S}_1} S \setminus \MID(e), B:= \bigcup_{S \in \mathcal{S}_2} S \setminus \MID(e) \}$ is a biclique subgraph of conflict graph $G^c_{\mathcal{S}}$. Let induced subgraphs $G^c_A := G^c_{\mathcal{S}}(V(G^c_{\mathcal{S}}) \setminus A)$ and $G^c_B := G^c_{\mathcal{S}}(V(G^c_{\mathcal{S}}) \setminus B)$. By Proposition~\ref{prop:biclique_sep}, we know that $E(G^c_{\mathcal{S}}) = E(\{A, B\}) \cup E(G^c_A) \cup E(G^c_B)$. Hence, the union of $\{A, B\}$ and any biclique covers of $G^c_A$ and $G^c_B$ is a biclique cover of $G^c_{\mathcal{S}}$.

The rest of the proof will show that $G^c_A$ is the conflict graph of $\CDC(\mathcal{S}_2)$, $G^c_{\mathcal{S}_2}$. The proof that $G^c_B$ is the conflict graph of $\CDC(\mathcal{S}_1)$ follows the same manner.

Denote the vertices incident to $e$ to be $S^1 \in \mathcal{S}_1$ and $S^2 \in \mathcal{S}_2$. Then, $S^1 \cap S^2 = \MID(e)$ by the definition of junction tree of $\CDC(\mathcal{S})$. Also, $(\bigcup_{S \in \mathcal{S}_1} S) \cap (\bigcup_{S \in \mathcal{S}_2} S) = \MID(e)$ because $\mathcal{T}_{\mathcal{S}}$ is a tree. Hence, $V(G^c_{\mathcal{S}}) \setminus A = \bigcup_{S \in \mathcal{S}_2} S$ and $V(G^c_A) = V(G^c_{\mathcal{S}_2})$.

Since $G^c_A$ is a subgraph of $G^c_{\mathcal{S}}$, an edge $uv$ is in $G^c_A$ if and only if $\{u,v\} \not\in S$ for all $S \in \mathcal{S}$. Similarly, an edge $uv$ is in $G^c_{\mathcal{S}_2}$ if and only if $\{u,v\} \not\subseteq S$ for all $S \in \mathcal{S}_2$. Thus, it is not hard to see that $E(G^c_A) \subseteq E(G^c_{\mathcal{S}_2})$.

In the other direction, we know that $\{u,v\} \not\subseteq S$ for all $S \in \mathcal{S}_2$. We want to show that $\{u,v\} \not\subseteq S$ for all $S \in \mathcal{S}$ by contradiction. Assume that $\{u, v\} \in S^0$ for some $S^0 \in \mathcal{S}_1$. Since $u,v \in \bigcup_{S \in \mathcal{S}_2} S$, $u$ and $v$ must be in $\MID(e)$ by the definition of junction tree of $\CDC(\mathcal{S})$. By the definition of $\MID(e)$, the vertex incident to $e$ in $\mathcal{S}_2$ contains both $u$ and $v$, which is a contradiction. Hence, $ E(G^c_{\mathcal{S}_2}) \subseteq E(G^c_A)$.
\qed\end{proof}

\begin{proposition}\label{prop:separation}
Given $\CDC(\mathcal{S})$ and its junction tree $\mathcal{T}_{\mathcal{S}}$, any edge $e$ of $\mathcal{T}_{\mathcal{S}}$ can partition the tree into two subtrees $\mathcal{T}_1$ and $\mathcal{T}_2$ whose vertices are $\mathcal{S}_1$ and $\mathcal{S}_2$, respectively. Then, $\mathcal{T}_1$ is a junction tree of $\CDC(\mathcal{S}_1)$ and $\mathcal{T}_2$ is a junction tree of $\CDC(\mathcal{S}_2)$.
\end{proposition}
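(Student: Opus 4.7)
The plan is to show that the junction tree property is preserved under the edge-deletion partitioning, which is essentially a structural consequence of the uniqueness of paths in trees. I will focus on proving $\mathcal{T}_1$ is a junction tree of $\CDC(\mathcal{S}_1)$; the argument for $\mathcal{T}_2$ is symmetric.

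First, I would note that $\mathcal{T}_1$, being a connected component of $\mathcal{T}_{\mathcal{S}}$ with edge $e$ removed, is itself a tree whose vertex set is $\mathcal{S}_1$, and it is a spanning subgraph of the intersection graph $\mathcal{K}_{\mathcal{S}_1}$ (since every edge of $\mathcal{T}_1$ is an edge of $\mathcal{T}_{\mathcal{S}} \subseteq \mathcal{K}_{\mathcal{S}}$ between two vertices in $\mathcal{S}_1$, which is therefore an edge of $\mathcal{K}_{\mathcal{S}_1}$). The middle sets and weights assigned to each edge agree with those in $\mathcal{T}_{\mathcal{S}}$ because they are defined purely in terms of pairwise intersections of the incident sets.

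The key step is to verify the path condition of Definition~\ref{def:junction_tree}. Pick any two distinct vertices $S^1, S^2 \in \mathcal{S}_1$. Let $\mathcal{P}$ be the unique path between them in $\mathcal{T}_{\mathcal{S}}$, and $\mathcal{P}_1$ be the unique path between them in $\mathcal{T}_1$. I would argue $\mathcal{P} = \mathcal{P}_1$ by observing that $\mathcal{P}$ cannot use the edge $e$: since $e$ is a cut edge of $\mathcal{T}_{\mathcal{S}}$ separating $\mathcal{S}_1$ from $\mathcal{S}_2$ and both endpoints of $\mathcal{P}$ are in $\mathcal{S}_1$, traversing $e$ would force $\mathcal{P}$ to visit $\mathcal{S}_2$ and return, contradicting the acyclicity of $\mathcal{T}_{\mathcal{S}}$. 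Hence every vertex and edge of $\mathcal{P}$ lies in $\mathcal{T}_1$, so $\mathcal{P}$ is also a path in $\mathcal{T}_1$; by uniqueness, $\mathcal{P} = \mathcal{P}_1$.

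Finally, since $\mathcal{T}_{\mathcal{S}}$ is a junction tree of $\CDC(\mathcal{S})$, the path $\mathcal{P}$ satisfies $S^1 \cap S^2 \subseteq S$ for every $S \in V(\mathcal{P})$ and $S^1 \cap S^2 \subseteq \MID(e')$ for every $e' \in E(\mathcal{P})$. Transporting this through $\mathcal{P} = \mathcal{P}_1$ yields the required condition for $\mathcal{T}_1$. I do not anticipate any genuine obstacle here; the proof is essentially bookkeeping, and the only subtle point to state carefully is the path-equality argument from the cut-edge property, which I expect to be the main (minor) hurdle to articulate cleanly.
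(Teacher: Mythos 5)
Your proof is correct and follows essentially the same route as the paper's: restrict attention to $\mathcal{T}_1$ by symmetry, show the unique $\mathcal{T}_{\mathcal{S}}$-path between any two vertices of $\mathcal{S}_1$ stays inside $\mathcal{T}_1$, and transfer the junction-tree path condition. Your cut-edge argument merely makes explicit a step the paper states more briefly ("$P$ is also a path in $\mathcal{T}_1$"), so no substantive difference.
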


\begin{proof}
Given an arbitrary edge $e$ in $\mathcal{T}_{\mathcal{S}}$, since $\mathcal{T}_{\mathcal{S}}$ is a tree, edge $e$ can partition the graph into two subtrees $\mathcal{T}_1$ and $\mathcal{T}_2$ where the vertices of two subtrees are $\mathcal{S}_1$ and $\mathcal{S}_2$, respectively. It is sufficient to prove that $\mathcal{T}_1$ is a junction tree of $\CDC(\mathcal{S}_1)$. The proof that $\mathcal{T}_2$ is a junction tree of $\CDC(\mathcal{S}_2)$ follows the same manner.

For arbitrary distinct $S, S' \in \mathcal{S}_1$, there exists a unique path $P$ between $S$ and $S'$ in $\mathcal{T}_{\mathcal{S}}$. Since $\mathcal{T}_{\mathcal{S}}$ is a junction tree of $\CDC(\mathcal{S})$, $S \cap S' \subseteq \MID(e')$ for any $e' \in E(P)$ and $S \cap S' \subseteq S''$ for any $S'' \in V(P)$. Because $\mathcal{T}_1$ is a subtree of $\mathcal{T}_{\mathcal{S}}$, $P$ is also a path in $\mathcal{T}_1$. Hence, according to Definition~\ref{def:junction_tree} (junction tree), $\mathcal{T}_1$ is a junction tree of $\CDC(\mathcal{S}_1)$.
\qed\end{proof}


\begin{theorem} \label{thm:worst_case}
Suppose that the input of Algorithm~\ref{alg:biclique_sep} is a junction tree of $\CDC(\mathcal{S})$, $\mathcal{T}_{\mathcal{S}}$. Then, the output $\bc = \Call{Separation}{\mathcal{T}_{\mathcal{S}}}$ is a biclique cover of the conflict graph $G^c_{\mathcal{S}}$ with a size of at most $|\mathcal{S}| - 1$.
\end{theorem}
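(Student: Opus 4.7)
The plan is to prove this by strong induction on $|\mathcal{S}|$, exploiting the recursive structure of the Separation subroutine together with Propositions~\ref{prop:sep0} and~\ref{prop:separation}, which together guarantee that a single separation step both (i) produces a biclique covering the ``edges across the cut'' in $G^c_{\mathcal{S}}$, and (ii) reduces to two strictly smaller subproblems that are still CDCs admitting junction trees.

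For the base case $|\mathcal{S}| = 1$, the junction tree has no edges, so the recursion returns an empty biclique cover. The conflict graph $G^c_{\mathcal{S}}$ has no edges in this case, because every pair of elements in $J$ lies inside the single set of $\mathcal{S}$ and hence is feasible. Thus the empty cover is valid, and its size $0$ equals $|\mathcal{S}| - 1$.

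For the inductive step, assume the claim for every $\CDC$ whose junction tree has strictly fewer than $|\mathcal{S}| \geq 2$ vertices. The separation step picks an edge $e \in E(\mathcal{T}_{\mathcal{S}})$, which partitions the vertex set of $\mathcal{T}_{\mathcal{S}}$ into nonempty $\mathcal{S}_1, \mathcal{S}_2$ with $|\mathcal{S}_1| + |\mathcal{S}_2| = |\mathcal{S}|$. By Proposition~\ref{prop:separation}, the two resulting subtrees $\mathcal{T}_1, \mathcal{T}_2$ are junction trees of $\CDC(\mathcal{S}_1), \CDC(\mathcal{S}_2)$, so the inductive hypothesis applies to the recursive calls and yields biclique covers of $G^c_{\mathcal{S}_1}$ and $G^c_{\mathcal{S}_2}$ of sizes at most $|\mathcal{S}_1| - 1$ and $|\mathcal{S}_2| - 1$, respectively. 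By Proposition~\ref{prop:sep0}, appending the single biclique $\left\{\bigcup_{S \in \mathcal{S}_1} S \setminus \MID(e),\ \bigcup_{S \in \mathcal{S}_2} S \setminus \MID(e)\right\}$ to the union of these two covers produces a biclique cover of $G^c_{\mathcal{S}}$ whose size is at most
\[
    1 + (|\mathcal{S}_1| - 1) + (|\mathcal{S}_2| - 1) = |\mathcal{S}| - 1,
\]
closing the induction.

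I do not expect a serious obstacle here: the substantive work has already been carried out in Propositions~\ref{prop:sep0} and~\ref{prop:separation}, which supply exactly the decomposition identity and the ``junction tree is preserved under edge-cuts'' property needed to make the induction go through. The only point that requires a moment's care is verifying the base case (that a single-set $\CDC$ has an edgeless conflict graph), and observing that even if one side of the constructed biclique happens to be empty (for instance, when $\bigcup_{S \in \mathcal{S}_i} S \subseteq \MID(e)$), this only strengthens the bound since such a ``biclique'' covers no edges and can be discarded without affecting correctness.
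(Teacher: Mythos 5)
Your proof is correct and follows essentially the same route as the paper: both rely on Propositions~\ref{prop:sep0} and~\ref{prop:separation} for correctness of the recursion and on the observation that at most one biclique is generated per edge of the junction tree (the paper counts the $|\mathcal{S}|-1$ edges directly, which is equivalent to your recursive count $1 + (|\mathcal{S}_1|-1) + (|\mathcal{S}_2|-1)$). Your explicit induction and the remark about possibly empty sides are just a more careful write-up of the same argument.
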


\begin{proof}
If the junction tree only has one vertex, then there is not any edge in the corresponding conflict graph. Thus, we do not need to add any biclique to the result. Implied by Proposition~\ref{prop:sep0} and Proposition~\ref{prop:separation}, we know that the output $\bc$ is a biclique cover of $G^c_{\mathcal{S}}$. There are $|\mathcal{S}|-1$ edges in the junction tree $\mathcal{T}_{\mathcal{S}}$ and we only add one biclique into the result when removing each edge in the junction tree. Thus, the size of $\bc$ is at most $|\mathcal{S}| - 1$.
\qed\end{proof}

\begin{algorithm}[H]
\begin{algorithmic}[1]
\State \textbf{Input}: A junction tree $\mathcal{T}_{\mathcal{S}}$ of $\CDC(\mathcal{S})$.
\State \textbf{Output}: A tuple of bicliques $\bc := (\{A^{k}, B^{k}\})_k$ of the conflict graph $G^c_{\mathcal{S}}$ of $\CDC(\mathcal{S})$.
\Function{Separation}{$\mathcal{T}_{\mathcal{S}}$}
\If{$|V(\mathcal{T}_{\mathcal{S}})| \leq 1$}
\State \textbf{return} $()$.
\EndIf
\State Find an edge $e$ to cut $\mathcal{T}_{\mathcal{S}}$ into two components $\mathcal{T}_1$ and $\mathcal{T}_2$ such that $|V(\mathcal{T}_1) - V(\mathcal{T}_2)|$ is minimized. \label{ln:e_selection} \Comment{It can be done by iterating all edges in $\mathcal{T}_{\mathcal{S}}$.}\label{ln:bc_sep_edge}
\State $A := \bigcup_{S \in V(\mathcal{T}_1)} S \setminus \MID(e)$; $B := \bigcup_{S \in V(\mathcal{T}_2)} S \setminus \MID(e)$. \label{ln:bc_sep_bc}
\State $\bc_1 := \Call{Separation}{\mathcal{T}_1}$; $\bc_2 := \Call{Separation}{\mathcal{T}_2}$.
\State Pop the first element of $\bc_1$ as $b_1$ and pop the first element of $\bc_2$ as $b_2$. \Comment{If a set is $()$, the biclique popped is also $()$.}
\State \textbf{return} $(\{A, B\}, b_1, b_2, \bc_1 \hdots, \bc_2 \hdots)$. \Comment{Store the biclique by level order.} \label{ln:bc_return}
\EndFunction
\end{algorithmic}
\caption{A separation subroutine.} \label{alg:biclique_sep}
\end{algorithm}

Note that the decision of selecting edge $e$ in Line~\ref{ln:e_selection} of Algorithm~\ref{alg:biclique_sep} was made due to simplicity, but it might not be the best possible choice; we leave that tuning to future work. Also note that $\bc_i \hdots$ represents all the elements in $\bc_i$ for $i \in \{1,2\}$ in Line~\ref{ln:bc_return} of Algorithm~\ref{alg:biclique_sep}.

\begin{algorithm}[H]
\begin{algorithmic}[1]
\State \textbf{Input}: A junction tree $\mathcal{T}_{\mathcal{S}}$ of $\CDC(\mathcal{S})$ and conflict graph $\mathcal{G}^c_{\mathcal{S}}$ of $\CDC(\mathcal{S})$.
\State \textbf{Output}: A biclique cover $\{\{A^k, B^k\}\}_k$ of the conflict graph $G^c_{\mathcal{S}}$.
\State $\bc :=\Call{Separation}{\mathcal{T}_{\mathcal{S}}}$.
\State $\bc_{\merged} := ()$.
\For{$k \in \llbracket |\bc| \rrbracket$} \label{ln:bc_start_for}
\State $\flag := \True$ and $\{A, B\} := \bc[k]$.
\For{$\{A', B'\} \in \bc_{\merged}$}
\If {$\{A' \cup A, B' \cup B\}$ is a biclique subgraph of $G^c_{\mathcal{S}}$}
\State Replace $\{A', B'\}$ with $\{A' \cup A, B' \cup B\}$ in $\bc_{\merged}$ and set $\flag = \False$.
\State \textbf{break}
\ElsIf {$\{A' \cup B, B' \cup A\}$ is a biclique subgraph of $G^c_{\mathcal{S}}$}
\State Replace $\{A', B'\}$ with $\{A' \cup B, B' \cup A\}$ in $\bc_{\merged}$ and set $\flag = \False$.
\State \textbf{break}
\EndIf
\EndFor
\If {$\flag$}
\State $\bc_{\merged} := (\bc_{\merged} \hdots, \{A, B\})$.
\EndIf
\EndFor \label{ln:bc_end_for}
\State \textbf{return} $\set(\bc_{\merged})$ \Comment $\set(\bc_{\merged})$ converts the tuple to a set.
\end{algorithmic}
\caption{Find a small biclique cover of $G^c_{\mathcal{S}}$ in polynomial time.} \label{alg:biclique}
\end{algorithm}


\begin{theorem} \label{thm:alg_sep_bound_time}
Suppose $d := |\mathcal{S}|$, $n := |V(G^c_{\mathcal{S}})| \equiv |\bigcup_{S \in \mathcal{S}} S|$, and $m: = |E(G^c_{\mathcal{S}})|$, Algorithm~\ref{alg:biclique} will return a biclique cover of the conflict graph $G^c_{\mathcal{S}}$ of $\CDC(\mathcal{S})$ with size at most $d-1$ in $O(d^3 + n d^2 + m d^2)$ time.
\end{theorem}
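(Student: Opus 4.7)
The plan has two parts: correctness (the output is a biclique cover of $G^c_{\mathcal{S}}$ of size at most $d-1$) and the running time bound. For correctness, Theorem~\ref{thm:worst_case} already guarantees that $\bc := \Call{Separation}{\mathcal{T}_{\mathcal{S}}}$ is a biclique cover of $G^c_{\mathcal{S}}$ with $|\bc| \leq d - 1$. A straightforward induction on the outer loop of Algorithm~\ref{alg:biclique} shows that after processing $\bc[1], \ldots, \bc[k]$ the bicliques in $\bc_{\merged}$ jointly cover every edge covered by these first $k$ bicliques: when the current $\{A,B\}$ is appended as a new entry this is immediate, and when it is merged with an existing $\{A', B'\}$ the replacement $\{A' \cup A, B' \cup B\}$ is verified to be a biclique of $G^c_{\mathcal{S}}$ before substitution, so no covered edges are lost. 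At termination $\bc_{\merged}$ covers all of $E(G^c_{\mathcal{S}})$ and has size at most $|\bc| \leq d - 1$.

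For the separation running time, each recursive call either returns in $O(1)$ or performs exactly one cut; the recursion tree therefore has $d$ leaves and at most $d - 1$ internal nodes, giving $O(d)$ total calls. Within a call on a subtree of size at most $d$, Line~\ref{ln:e_selection} iterates over at most $d - 1$ candidate edges and computes the two resulting component sizes in $O(d)$ per edge via BFS on the subtree, for $O(d^2)$ total; Line~\ref{ln:bc_sep_bc} assembles $A$ and $B$ by scanning the elements of the sets at the subtree's vertices, which is bounded by $\sum_{S \in \mathcal{S}} |S| = O(nd)$. Summing over the $O(d)$ calls yields $O(d^3 + nd^2)$.

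For the merging phase, the outer loop runs at most $d - 1$ times and at each iteration $|\bc_{\merged}| \leq d - 1$, producing $O(d^2)$ candidate merge checks. Storing $G^c_{\mathcal{S}}$ as an adjacency matrix, one tests whether $\{X, Y\} := \{A' \cup A, B' \cup B\}$ (or the swapped variant) is a biclique by verifying $X \cap Y = \emptyset$ in $O(n)$ and then that every pair in $X \times Y$ is an edge; if $|X| \cdot |Y| > m$ the check fails immediately, and otherwise it costs $O(|X| \cdot |Y|) \leq O(m)$. Each check is therefore $O(m + n)$, so the merging phase contributes $O(md^2 + nd^2)$. Adding the two phases gives the claimed $O(d^3 + nd^2 + md^2)$. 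The main subtlety is the biclique-verification step: a naive $O(n^2)$ per check would give $O(n^2 d^2)$, strictly worse than claimed, and the sharper $O(m + n)$ bound, obtained by exploiting that a candidate with $|X| \cdot |Y| > m$ cannot possibly be a biclique of $G^c_{\mathcal{S}}$, is what makes the overall analysis tight.
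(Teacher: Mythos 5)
Your proof is correct and follows essentially the same route as the paper's: correctness via Theorem~\ref{thm:worst_case} together with the observation that the merging loop only replaces bicliques with verified larger ones, and the same decomposition of the running time into an $O(d^3 + nd^2)$ separation cost and an $O(md^2 + nd^2)$ merging cost. Your justification of the per-check cost for biclique verification (rejecting any candidate with $|X|\cdot|Y| > m$ immediately, so each check is $O(m+n)$) is in fact more explicit than the paper, which simply asserts that checking whether a biclique is a subgraph of $G^c_{\mathcal{S}}$ takes $O(m)$ time.
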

\begin{proof}
Algorithm~\ref{alg:biclique_sep} returns a biclique cover of the conflict graph $G^c_{\mathcal{S}}$. The operations from lines \ref{ln:bc_start_for} to \ref{ln:bc_end_for} of Algorithm~\ref{alg:biclique} only merge bicliques. Hence, Algorithm~\ref{alg:biclique} also returns a biclique cover with size at most $d-1$. 

Since each $\Call{Separation}{}$ function removes an edge in $\mathcal{T}_{\mathcal{S}}$ and there are only $d-1$ edges in $\mathcal{T}_{\mathcal{S}}$, $\Call{Separation}{}$ function is called at most $d-1$ times. Creating bicliques in line~\ref{ln:bc_sep_bc} of Algorithm~\ref{alg:biclique_sep} requires $O(d n)$ time. There are at most $d-1$ edges and checking the numbers of vertices in two subtrees by each edge cut can be done in $O(d)$ time. Thus, the computational time for $\Call{Separation}{\mathcal{G}_{\mathcal{S}}}$ is $O(d^2 n + d^3)$.

As proven in Theorem~\ref{thm:worst_case}, the number of bicliques returned by $\Call{Separation}{\mathcal{T}_\mathcal{S}}$ is at most $d-1$. Then, the number of iterations from lines \ref{ln:bc_start_for} to \ref{ln:bc_end_for} of Algorithm~\ref{alg:biclique} is $O(d^2)$. Furthermore, merging two bicliques requires $O(n)$ time but checking whether a biclique is a subgraph of $G^c_{\mathcal{S}}$ requires $O(m)$ time. The total operation time from lines \ref{ln:bc_start_for} to \ref{ln:bc_end_for} of Algorithm~\ref{alg:biclique} is $O(m d^2)$ time.
\qed\end{proof}

\section{Combinatorial Disjunctive Constraints of Generalized Special Ordered Sets} \label{sec:sosk_cdc}

In this section, we first formally define the generalized special ordered sets $\SOS k(N)$. Each $\SOS k(N)$ contains $N$ non-negative variables such that each variable has a unique order from $1$ to $N$, at most $k$ consecutive variables can be nonzero, and the total sum of $N$ variables is 1. 

\begin{definition}[generalized special ordered sets] \label{def:sosk} 
A \textit{generalized special ordered set constraint} with two positive integers $k \leq N$:
\begin{align} \label{eq:sosk}
    \lambda \in \SOS k(N) &:= \bigcup_{i=1}^{N-k+1} \left\{\lambda \in \Delta^{N}: \lambda_j = 0, \forall j \in \llbracket N\rrbracket  \setminus \llbracket i, i+k-1\rrbracket  \right\}.
\end{align}
\end{definition}

Huchette and Vielma~\cite{huchette2019combinatorial} used the graph union construction to produce an IB-scheme for $\SOS k(N)$ of depth $\lceil \log_2(\lceil N / k \rceil - 1) \rceil + 3k$ with two positive integers $N > k$. We derive a better bound by apply Algorithm~\ref{alg:biclique}. Denote that $\diameter(S) = \max\{|s_1 - s_2|: \forall s_1, s_2 \in S\}$ and $\dist(A, B) = \min\{|u - v|: \forall u \in A, v \in B\}$. Note that the conflict graph of $\SOS k(N)$ is a graph with no edges when $N = k$, so its biclique cover number is 0. Also note that the conflict graph of $\SOS 1(N)$ is a complete graph with $N$ vertices. Its biclique cover number is $\lceil \log_2(N) \rceil$ and the construction of such a biclique cover is simple. Hence we can focus on the case that $N > k \geq 2$ without loss of generality. We first show that $\SOS k(N)$ admits a junction tree.


\begin{proposition} \label{prop:SOSkN_tree}
Given two positive integers $N > k$, then the combinatorial disjunctive constraint of $\SOS k(N)$, $\CDC(\mathcal{S})$ where $\mathcal{S} = \{\llbracket i, i+k-1\rrbracket \}_{i=1}^{N-k+1}$, admits a junction tree. Therefore, $\SOS k(N)$ is pairwise IB-representable.
\end{proposition}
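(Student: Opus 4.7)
The plan is to exhibit an explicit junction tree for the $\CDC(\mathcal{S})$ of $\SOS k(N)$, namely the ``path'' tree in which consecutive interval sets are joined, and then check the two conditions in Definition~\ref{def:junction_tree} by direct interval arithmetic. Pairwise IB-representability will then follow immediately from Theorem~\ref{thm:IB_tree}.

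More concretely, I would set $S^i := \llbracket i, i+k-1\rrbracket$ for $i \in \llbracket 1, N-k+1\rrbracket$ and let $\mathcal{T}_{\mathcal{S}}$ be the spanning tree of $\mathcal{K}_{\mathcal{S}}$ with edge set $\{S^i S^{i+1} : i \in \llbracket 1, N-k\rrbracket\}$. The key observation is that for any $i < j$,
\[
S^i \cap S^j \;=\; \llbracket j,\, i+k-1\rrbracket
\]
(which is empty when $j > i+k-1$), and for every intermediate index $\ell$ with $i \leq \ell \leq j$,
\[
\llbracket j,\, i+k-1\rrbracket \;\subseteq\; \llbracket \ell,\, \ell+k-1\rrbracket \;=\; S^\ell,
\]
since $\ell \leq j$ and $i \leq \ell$. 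This takes care of the vertex-containment condition on the unique $S^i$-$S^j$ path in $\mathcal{T}_{\mathcal{S}}$.

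For the edge-containment condition, note that for consecutive indices $\ell, \ell+1$ on that path one has
\[
\MID(S^\ell S^{\ell+1}) \;=\; S^\ell \cap S^{\ell+1} \;=\; \llbracket \ell+1,\, \ell+k-1\rrbracket,
\]
and since $i \leq \ell$ and $\ell+1 \leq j$ we again obtain $\llbracket j,\, i+k-1\rrbracket \subseteq \llbracket \ell+1,\, \ell+k-1\rrbracket$. Both conditions of Definition~\ref{def:junction_tree} are therefore verified, so $\mathcal{T}_{\mathcal{S}}$ is a junction tree of $\CDC(\mathcal{S})$. Pairwise IB-representability then follows from Theorem~\ref{thm:IB_tree}.

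There is essentially no hard step here; the only thing to be careful about is the edge case where $S^i \cap S^j = \emptyset$ (i.e.\ $j > i+k-1$), in which both containments hold vacuously, so the argument above uniformly covers all pairs $i < j$.
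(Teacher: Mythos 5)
Your proposal is correct and takes essentially the same approach as the paper: both construct the path-shaped spanning tree with edges $S^iS^{i+1}$ and invoke Theorem~\ref{thm:IB_tree}, with the paper leaving the verification as "not hard to see." Your explicit interval computations simply fill in that omitted check, and they are accurate, including the vacuous case $S^i \cap S^j = \emptyset$.
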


\begin{proof}
Let $S^i = \llbracket i, i+k-1\rrbracket $. It is not hard to see that we can construct a junction tree $\mathcal{T}_{\mathcal{S}} := \{\mathcal{S}, \mathcal{E}\}$, where $\mathcal{E} = \{S^i S^{i+1}: \forall i \in \llbracket 1, N-k\rrbracket \}$. The graph is visualized in Figure~\ref{fig:SOSkN}. By Theorem~\ref{thm:IB_tree}, $\SOS k(N)$ is pairwise IB-representable.
\qed\end{proof}

\begin{figure}[H]
\centering
\begin{tikzpicture}[scale=0.2]
\svertex(1) [-{Latex[length=2.5mm]}, minimum size = 30pt, line width=0.5mm, draw=black] at (0,0) [] {$S^1$};
\svertex(2) [-{Latex[length=2.5mm]}, minimum size = 30pt, line width=0.5mm, draw=black] at (12, 0) [] {$S^2$};
\svertex(3) [-{Latex[length=2.5mm]}, minimum size = 30pt, line width=0.5mm, draw=white] at (24, 0) [] {$\hdots$};
\svertex(4) [-{Latex[length=2.5mm]}, minimum size = 30pt, line width=0.5mm, draw=black] at (36, 0) [] {$S^{N-k}$};
\svertex(5) [-{Latex[length=2.5mm]}, minimum size = 30pt, line width=0.5mm, draw=black] at (48, 0) [] {$S^{N - k + 1}$};
\draw [-] [line width=0.5mm] (1) to node {} (2);
\draw [-] [line width=0.5mm] (2) to node {} (3);
\draw [-] [line width=0.5mm] (3) to node {} (4);
\draw [-] [line width=0.5mm] (4) to node {} (5);
\end{tikzpicture}
\caption{A junction tree of $\SOS k (N)$, where each $S^i = \llbracket i, i+k-1\rrbracket $ and $\MID(S^iS^{i+1}) = \llbracket i+1, i+k-1\rrbracket $.}
\label{fig:SOSkN}
\end{figure}
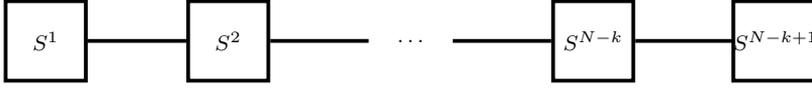

\begin{proposition} \label{prop:larger_SOS}
Given three positive integers $N_1 \geq N_2 > k$, let $J_1 = \llbracket N_1\rrbracket $ and $J_2 = \llbracket N_2\rrbracket $. Take $\mathcal{S}_1$ and $\mathcal{S}_2$ to correspond to $\SOS k(N_1)$ and $\SOS k(N_2)$, respectively. Then, the conflict graph $G^c_{\mathcal{S}_2}$ of $\SOS k(N_2)$ is an induced subgraph of the conflict graph $G^c_{\mathcal{S}_1}$ of $\SOS k(N_1)$. In other words, for any distinct vertices $u, v \in J_2$, $uv \in E(G^c_{\mathcal{S}_1})$ if and only if $uv \in E(G^c_{\mathcal{S}_2})$.
\end{proposition}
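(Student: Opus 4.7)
The plan is to give a crisp characterization of the edges of the conflict graph of $\SOS k(N)$ that depends only on $k$ and the gap $|u-v|$ (and not on $N$), and then read off both directions of the claim directly from this characterization. Concretely, letting $\mathcal{S}(N) := \{\llbracket i, i+k-1\rrbracket\}_{i=1}^{N-k+1}$, I will prove the intermediate lemma:

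\begin{quote}
For $N > k$ and any distinct $u,v \in \llbracket N\rrbracket$, the pair $\{u,v\}$ is an edge of $G^c_{\mathcal{S}(N)}$ if and only if $|u-v| \geq k$.
\end{quote}

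To prove the lemma, assume without loss of generality that $u < v$. By definition of the conflict graph, $uv$ is an edge iff $\{u,v\}$ is infeasible, i.e.\ no $S^i = \llbracket i, i+k-1\rrbracket \in \mathcal{S}(N)$ contains both $u$ and $v$. Such an $S^i$ exists iff there is an integer $i$ with $1 \leq i \leq u$ and $v \leq i + k - 1 \leq N$. If $v - u \leq k-1$, then $i := \max(1, v-k+1)$ satisfies $1 \leq i \leq u$ and $i+k-1 \leq \max(k, v) \leq N$ (since $v \leq N$ and $N > k$), so $\{u,v\}$ is feasible. Conversely, if $v - u \geq k$, then any candidate $i \leq u$ satisfies $i + k - 1 \leq u + k - 1 < v$, so no $S^i$ contains both $u$ and $v$. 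This gives the claimed characterization, and notably the constant $N$ has dropped out of the criterion entirely.

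With the lemma in hand, the proposition follows in one line. Fix distinct $u,v \in J_2 \subseteq J_1$; since $N_2 > k$ and $N_1 \geq N_2 > k$, the lemma applies to both conflict graphs and gives
\[
uv \in E(G^c_{\mathcal{S}_1}) \iff |u-v| \geq k \iff uv \in E(G^c_{\mathcal{S}_2}),
\]
as required. The only mild obstacle is bookkeeping the boundary conditions when verifying the lemma (ensuring $i \geq 1$ and $i + k - 1 \leq N$ simultaneously), and this is handled cleanly by the choice $i = \max(1, v-k+1)$ together with the standing hypothesis $N > k$.
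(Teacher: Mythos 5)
Your proof is correct. It differs from the paper's in organization: the paper handles the two directions asymmetrically, getting the forward direction for free from the containment $\mathcal{S}_2 \subseteq \mathcal{S}_1$ (an edge of $G^c_{\mathcal{S}_1}$ means no set of $\mathcal{S}_1$ contains $\{u,v\}$, hence no set of the subcollection $\mathcal{S}_2$ does), and proving the backward direction by noting that $uv \in E(G^c_{\mathcal{S}_2})$ forces $|u-v| \geq k$ while every $S \in \mathcal{S}_1$ has $\diameter(S) < k$. You instead prove a single $N$-independent characterization --- $uv$ is an edge of the conflict graph of $\SOS k(N)$ iff $|u-v| \geq k$, for any $N > k$ --- and read off both directions at once. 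The two arguments share the same core insight (windows of length $k$ detect exactly the pairs at distance $< k$), but your route is slightly more self-contained: the paper's backward direction asserts $|u-v|\geq k$ without spelling out why an edge of $G^c_{\mathcal{S}_2}$ implies this, which is precisely the boundary bookkeeping you carry out via $i = \max(1, v-k+1)$ together with $N > k$; the paper's forward direction, on the other hand, is shorter than yours since it avoids the characterization entirely. Your lemma also isolates a reusable fact (the edge set depends only on $k$), which is a small bonus beyond what the proposition itself requires.
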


\begin{proof}
In the forward direction, suppose that $uv \in G^c_{\mathcal{S}_1}$. Then, $\{u, v\} \not\subseteq S$ for any $S \in \mathcal{S}_1$. Since $\mathcal{S}_2 \subseteq \mathcal{S}_1$, then $\{u, v\} \not\subseteq S$ for all $S \in \mathcal{S}_2$. Hence, $uv \in G^c_{\mathcal{S}_2}$.

In the backward direction, suppose that $uv \in G^c_{\mathcal{S}_2}$. By the definition of $\SOS k(N)$, $\mathcal{S}_2 = \{\llbracket i, i+k-1\rrbracket\}_{i=1}^{N_2-k+1}$. Thus, $\{u, v\} \not\subseteq S$ for all $S \in \mathcal{S}_1$ because $\diameter(S) < k$ for all $S \in \mathcal{S}_1$ and $|u - v| \geq k$.
\qed\end{proof}

By Proposition~\ref{prop:larger_SOS}, we know that if $N_1 \geq N_2$, then the depth of the biclique cover of the conflict graph of $\SOS k(N_2)$ is no larger than the depth of the biclique cover of the conflict graph of $\SOS k(N_1)$: given a biclique cover of the conflict graph of $\SOS k(N_1)$, we can construct a biclique cover of the conflict graph of $\SOS k(N_1)$ by only keep the vertices in $\llbracket N_2\rrbracket$ of all the bicliques. Then, it is sufficient for us to focus on the case where $N = 2^b + k - 1$, where $b$ is a positive integer.

In Proposition~\ref{prop:sosk_ub}, we construct a biclique cover of $\SOS k(N)$ for the conflict graph of $\SOS k(N)$ by the separation subroutine in Algorithm~\ref{alg:biclique_sep}.

\begin{proposition} \label{prop:sosk_ub}
Assume that $N = 2^b + k - 1$, $k \geq 2$, and $b$ are positive integers. Let $J = \llbracket N\rrbracket $ and take $\mathcal{S}$ to correspond to the $\SOS k(N)$ constraint on $J$, i.e. $\mathcal{S} := \{\llbracket i, i+k-1\rrbracket \}_{i=1}^{2^b}$, and let $G^c_{\mathcal{S}}$ be the associated conflict graph. Then, $\{\{A^{i, j}, B^{i, j}\}: i \in \llbracket 0, b-1\rrbracket, j \in \llbracket 0, 2^i-1\rrbracket\}$ is a biclique cover of $G^c_{\mathcal{S}}$, where
\begin{subequations}
\begin{align}
    A^{i,j} &:= \llbracket 1 + j 2^{b-i}, (2j + 1) 2^{b - i - 1}\rrbracket , \\
    B^{i, j} &:= \llbracket (2j+1) 2^{b-i-1} +k, (j+1) 2^{b-i} + k-1\rrbracket .
\end{align}
\end{subequations}
\end{proposition}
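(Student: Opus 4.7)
The plan is to show that the listed collection of bicliques is exactly the output of the separation subroutine (Algorithm~\ref{alg:biclique_sep}) applied to the path junction tree $\mathcal{T}_{\mathcal{S}}$ described in Proposition~\ref{prop:SOSkN_tree}. Once this identification is made, the result follows immediately from Theorem~\ref{thm:worst_case}, which certifies that such an output is a biclique cover of $G^c_{\mathcal{S}}$.

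Concretely, I would prove by induction on $b - i$ the following recursion invariant: each recursive invocation of the separation routine on the subpath of $\mathcal{T}_{\mathcal{S}}$ spanning the contiguous vertices $S^{j 2^{b-i}+1}, \ldots, S^{(j+1) 2^{b-i}}$ (the subtree corresponding to level $i$, position $j$) produces at the top of its recursion the biclique $\{A^{i,j}, B^{i,j}\}$ and then recurses on the two halves at level $i{+}1$ with positions $2j$ and $2j{+}1$. Because this subpath has an even number $2^{b-i} \geq 2$ of vertices, the unique edge whose removal balances the two sides is the middle edge $e$ between $S^{(2j+1) 2^{b-i-1}}$ and $S^{(2j+1) 2^{b-i-1}+1}$; this is the edge the algorithm selects in Line~\ref{ln:e_selection}. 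Its middle set is
\[
\MID(e) = \llbracket (2j+1) 2^{b-i-1}+1,\; (2j+1) 2^{b-i-1}+k-1 \rrbracket.
\]
A routine interval computation then shows that the left vertex union is $\llbracket j 2^{b-i}+1,\; (2j+1) 2^{b-i-1}+k-1\rrbracket$ and the right vertex union is $\llbracket (2j+1) 2^{b-i-1}+1,\; (j+1) 2^{b-i}+k-1\rrbracket$; subtracting $\MID(e)$ from each yields precisely $A^{i,j}$ and $B^{i,j}$. The two subpath calls then match the inductive template at level $i{+}1$, closing the induction. The base case $i = b-1$ corresponds to a two-vertex subpath $\{S^{2j+1}, S^{2j+2}\}$ whose single edge yields the correct singleton pair, after which the algorithm descends into singleton subtrees and immediately returns the empty tuple as prescribed.

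The main obstacle is purely the bookkeeping of the interval endpoints $(2j+1)2^{b-i-1}$, $j 2^{b-i}+1$, and $(j+1)2^{b-i}+k-1$ through the recursion; no substantively new argument beyond the induction plus Propositions~\ref{prop:sep0}, \ref{prop:separation} and Theorem~\ref{thm:worst_case} is required. As an independent sanity check, one can verify directly that $\min B^{i,j} - \max A^{i,j} = k$, so every pair in $A^{i,j} \times B^{i,j}$ is indeed an edge of $G^c_{\mathcal{S}}$ (edges of which are exactly the pairs $\{u, v\}$ with $|u - v| \geq k$, since each $S \in \mathcal{S}$ is an interval of $k$ consecutive integers); this confirms that each $\{A^{i,j}, B^{i,j}\}$ is a valid biclique subgraph, while the algorithmic identification handles coverage.
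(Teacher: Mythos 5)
Your proposal is correct and follows essentially the same route as the paper: the paper likewise identifies each $\{A^{i,j},B^{i,j}\}$ as the biclique produced by Algorithm~\ref{alg:biclique_sep} run on the path junction tree of Proposition~\ref{prop:SOSkN_tree}, writing the pre-images $A_{\pre}^{i,j}, B_{\pre}^{i,j}$ as unions of consecutive $S^l$ and subtracting the middle set, with coverage guaranteed by Propositions~\ref{prop:sep0}--\ref{prop:separation} and Theorem~\ref{thm:worst_case}. Your explicit induction on the recursion and the distance check $\min B^{i,j}-\max A^{i,j}=k$ simply spell out bookkeeping the paper leaves to its figure.
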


\begin{proof}
Denote that $S^i = \llbracket i, i + k - 1\rrbracket $, then $\mathcal{S} = \{S^i\}_{i=1}^{2^b}$. By Proposition~\ref{prop:SOSkN_tree}, we know that a junction tree of $\mathcal{S}$, $\mathcal{T}_{\mathcal{S}} = (\mathcal{S}, \mathcal{E})$ where $\mathcal{E} = \{S^i S^{i+1}, \forall i \in \llbracket N-k\rrbracket \}$. By calling $\Call{Separation}{\mathcal{T}_{\mathcal{S}}}$ in Algorithm~\ref{alg:biclique_sep}, we can obtain a biclique cover with the size of $(2^b - 1)$. As shown in Figure~\ref{fig:sosk_visual}, we denote $A_{\pre}^{i, j}$ and $B_{\pre}^{i, j}$ for $i \in \{0, \hdots, b-1\}$ and $j \in \{0, \hdots, 2^i-1\}$ as
\begin{subequations}
\begin{align}
    A_{\pre}^{i, j} &:= \bigcup_{l=1 + j 2^{b-i}}^{j 2^{b-i} + 2^{b-i-1}} S^l = \llbracket 1 + j 2^{b-i}, 2^{b - i - 1} + j 2^{b - i} + k-1\rrbracket , \\
    B_{\pre}^{i, j} &:= \bigcup_{l=j 2^{b-i} + 2^{b-i-1}+1}^{(j+1) 2^{b-i}} S^l= \llbracket j 2^{b-i} + 2^{b-i-1}+1, (j+1) 2^{b-i} + k-1\rrbracket .
\end{align}
\end{subequations}

Each biclique $\{A^{i, j}, B^{i, j}\}$ for $i \in \{0, \hdots, b-1\}$ and $j \in \{0, \hdots, 2^i-1\}$ can be expressed as 
\begin{subequations}
\begin{align}
    A^{i,j} &:= A_{\pre}^{i, j} \setminus B_{\pre}^{i, j} = \llbracket 1 + j 2^{b-i}, (2j + 1) 2^{b - i - 1}\rrbracket , \\
    B^{i, j} &:= B_{\pre}^{i, j} \setminus A_{\pre}^{i, j} = \llbracket (2j+1) 2^{b-i-1} +k, (j+1) 2^{b-i} + k-1\rrbracket .
\end{align}
\end{subequations}
\qed\end{proof}

We then show that many bicliques in the biclique cover $\{\{A^{i, j}, B^{i, j}\}: i \in \{0, \hdots, b-1\}, j \in \{0, \hdots, 2^i-1\}\}$ in Proposition~\ref{prop:sosk_ub} can be merged into a larger biclique subgraph of the conflict graph of $\SOS k (N)$. Hence, we can obtain a biclique cover with a smaller size.

\begin{figure}[H]
\centering
\resizebox{\textwidth}{!}{
\begin{tikzpicture}[scale=0.14]
\svertex(1) [-{Latex[length=2.5mm]}, minimum size = 30pt, line width=0.5mm, draw=black] at (0,0) [] {$1$};
\svertex(2) [-{Latex[length=2.5mm]}, minimum size = 30pt, line width=0.5mm, draw=white] at (12, 0) [] {$\hdots$};
\svertex(3) [-{Latex[length=2.5mm]}, minimum size = 30pt, line width=0.5mm, draw=black] at (24, 0) [] {${2^{b-1}}$};
\svertex(4) [-{Latex[length=2.5mm]}, minimum size = 30pt, line width=0.5mm, draw=black] at (36, 0) [] {${2^{b-1}+1}$};
\svertex(5) [-{Latex[length=2.5mm]}, minimum size = 30pt, line width=0.5mm, draw=white] at (48, 0) [] {$\hdots$};
\svertex(6) [-{Latex[length=2.5mm]}, minimum size = 30pt, line width=0.5mm, draw=black] at (60, 0) [] {${2^{b}}$};
\draw [-] [line width=0.5mm] (1) to node {} (2);
\draw [-] [line width=0.5mm] (2) to node {} (3);
\draw [-] [line width=0.5mm, dashed] (3) to node {} (4);
\draw [-] [line width=0.5mm] (4) to node {} (5);
\draw [-] [line width=0.5mm] (5) to node {} (6);
\draw[rotate=0,red,thick,dashed] (12, 0) ellipse (17 and 8) (12, 10) node{$A_{\pre}^{0,0}$};
\draw[rotate=0, blue,thick,dashed] (48, 0) ellipse (17 and 8) (48, 10) node{$B_{\pre}^{0,0}$};

\svertex(7) [-{Latex[length=2.5mm]}, minimum size = 30pt, line width=0.5mm, draw=black] at (-36, -20) [] {$1$};
\svertex(8) [-{Latex[length=2.5mm]}, minimum size = 30pt, line width=0.5mm, draw=white] at (-24, -20) [] {$\hdots$};
\svertex(9) [-{Latex[length=2.5mm]}, minimum size = 30pt, line width=0.5mm, draw=black] at (-12, -20) [] {${2^{b-2}}$};
\svertex(10) [-{Latex[length=2.5mm]}, minimum size = 30pt, line width=0.5mm, draw=black] at (0, -20) [] {${2^{b-2}+1}$};
\svertex(11) [-{Latex[length=2.5mm]}, minimum size = 30pt, line width=0.5mm, draw=white] at (12, -20) [] {$\hdots$};
\svertex(12) [-{Latex[length=2.5mm]}, minimum size = 30pt, line width=0.5mm, draw=black] at (24, -20) [] {${2^{b-1}}$};
\svertex(13) [-{Latex[length=2.5mm]}, minimum size = 30pt, line width=0.5mm, draw=black] at (36, -20) [] {${2^{b-1}+1}$};
\svertex(14) [-{Latex[length=2.5mm]}, minimum size = 30pt, line width=0.5mm, draw=white] at (48, -20) [] {$\hdots$};
\svertex(15) [-{Latex[length=2.5mm]}, minimum size = 30pt, line width=0.5mm, draw=black] at (60, -20) [] {${3 \cdot 2^{b-2}}$};
\svertex(16) [-{Latex[length=2.5mm]}, minimum size = 30pt, line width=0.5mm, draw=black] at (72, -20) [] {${3 \cdot 2^{b-2}+1}$};
\svertex(17) [-{Latex[length=2.5mm]}, minimum size = 30pt, line width=0.5mm, draw=white] at (84, -20) [] {$\hdots$};
\svertex(18) [-{Latex[length=2.5mm]}, minimum size = 30pt, line width=0.5mm, draw=black] at (96, -20) [] {${2^{b}}$};
\draw [-] [line width=0.5mm] (7) to node {} (8);
\draw [-] [line width=0.5mm] (8) to node {} (9);
\draw [-] [line width=0.5mm, dashed] (9) to node {} (10);
\draw [-] [line width=0.5mm] (10) to node {} (11);
\draw [-] [line width=0.5mm] (11) to node {} (12);
\draw[rotate=0,red,thick,dashed] (-24, -20) ellipse (17 and 9) (-24, -14) node{$A_{\pre}^{1,0}$};
\draw[rotate=0, blue,thick,dashed] (12, -20) ellipse (17 and 9) (12, -14) node{$B_{\pre}^{1,0}$};

\draw [-] [line width=0.5mm] (13) to node {} (14);
\draw [-] [line width=0.5mm] (14) to node {} (15);
\draw [-] [line width=0.5mm, dashed] (15) to node {} (16);
\draw [-] [line width=0.5mm] (16) to node {} (17);
\draw [-] [line width=0.5mm] (17) to node {} (18);
\draw[rotate=0,red,thick,dashed] (48, -20) ellipse (17 and 9) (48, -14) node{$A_{\pre}^{1,1}$};
\draw[rotate=0, blue,thick,dashed] (84, -20) ellipse (18 and 9) (84, -14) node{$B_{\pre}^{1,1}$};

\svertex(17) [-{Latex[length=2.5mm]}, minimum size = 30pt, line width=0.5mm, draw=white] at (30, -30) [] {\large $\vdots$};

\end{tikzpicture}
}
\caption{A demonstration of Algorithm~\ref{alg:biclique_sep} given a junction tree of $\CDC(\mathcal{S}) = \SOS k(N)$, i.e. $\mathcal{S} := \{S^i\}_{i=1}^d = \{\llbracket i, i+k-1\rrbracket \}_{i=1}^{2^b}$. Note that block $i$ represents $S^i$.}
\label{fig:sosk_visual}
\end{figure}


\begin{proposition} \label{prop:sosk_ub2}
Assume that $N = 2^b + k - 1$, $k \geq 2$, and $b$ are positive integers. Let $J = \llbracket N\rrbracket $ and take $\mathcal{S}$ to correspond to the $\SOS k(N)$ constraint on $J$, i.e. $\mathcal{S} := \{\llbracket i, i+k-1\rrbracket \}_{i=1}^{2^b}$. Take $\{\{A^{i, j}, B^{i, j}\}: i \in \{0, \hdots, b-1\}, j \in \{0, \hdots, 2^i-1\}\}$ as defined in Proposition~\ref{prop:sosk_ub} and $\alpha^i = \lceil \frac{k-1+2^{b-i-1}}{2^{b-i}} \rceil$ for $i \in \{0, \hdots, b-1\}$. Then, $\{L^{i, p}, R^{i, p}\}$ is a biclique subgraph of the conflict graph $G^c_{\mathcal{S}}$ for any integers $i \in \{0, \hdots, b-1\}$ and $0 \leq p \leq \min\{\alpha^i-1, 2^i-1\}$:

\begin{subequations}
\begin{align}
    L^{i, p} &= \left(\bigcup_{q = 0}^{\lfloor \frac{2^i-1 - p}{2 \alpha^i} \rfloor} A^{i,2q \alpha^i + p}\right) \cup \left(\bigcup_{q = 0}^{\lfloor \frac{2^i-1 - p}{2 \alpha^i} - \frac{1}{2} \rfloor} B^{i, (2q+1) \alpha^i + p} \right) \\
    R^{i, p} &= \left(\bigcup_{q = 0}^{\lfloor \frac{2^i-1 - p}{2 \alpha^i} \rfloor} B^{i,2q \alpha^i + p}\right) \cup \left(\bigcup_{q = 0}^{\lfloor \frac{2^i-1 - p}{2 \alpha^i} - \frac{1}{2} \rfloor} A^{i, (2q+1) \alpha^i + p} \right).
\end{align}
\end{subequations}

\end{proposition}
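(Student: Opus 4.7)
The plan is to verify the biclique condition directly: for every $u\in L^{i,p}$ and $v\in R^{i,p}$ I will show $|u-v|\geq k$. Since every $S\in\mathcal{S}$ is an interval of length $k$, two distinct indices $u,v\in J$ are contained in a common $S$ if and only if $|u-v|\leq k-1$, so the distance bound is equivalent to $uv\in E(G^c_{\mathcal{S}})$; it simultaneously forces $L^{i,p}\cap R^{i,p}=\emptyset$, yielding a valid biclique subgraph.

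First I would unfold the construction to track where each element sits: $L^{i,p}$ picks up $A^{i,j}$ precisely for the even-shifted block-indices $j=2q\alpha^i+p$ and $B^{i,j}$ precisely for the odd-shifted block-indices $j=(2q+1)\alpha^i+p$, while $R^{i,p}$ reverses this assignment. A pair $(u,v)\in L^{i,p}\times R^{i,p}$ then falls into one of four types (AA, BB, AB, BA). In the AA and BB types the block-indices $j,j'$ on the two sides differ by an odd multiple of $\alpha^i$, so $|j-j'|\geq\alpha^i$; in the AB and BA types they differ by an even multiple of $\alpha^i$, so either $j=j'$ or $|j-j'|\geq 2\alpha^i$.

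Next I would collect three distance formulas obtained by direct evaluation on the definitions of $A^{i,j}$ and $B^{i,j}$ from Proposition~\ref{prop:sosk_ub}: for $j<j'$, $\min A^{i,j'}-\max A^{i,j}=\min B^{i,j'}-\max B^{i,j}=1+(j'-j)2^{b-i}-2^{b-i-1}$; for $j\leq j'$, $\min B^{i,j'}-\max A^{i,j}=(j'-j)2^{b-i}+k$; and for $j<j'$, $\min A^{i,j'}-\max B^{i,j}=(j'-j-1)2^{b-i}-k+2$. For the AA and BB types the first formula at offset $\geq\alpha^i$ yields $|u-v|\geq 1+\alpha^i 2^{b-i}-2^{b-i-1}\geq k$, which is exactly the defining inequality $\alpha^i\geq(k-1+2^{b-i-1})/2^{b-i}$. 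In the AB and BA types the subcase $j=j'$ gives distance exactly $k$ by the second formula (or its swap). In the remaining subcase $|j-j'|\geq 2\alpha^i$ the only nontrivial direction to check is when the $A$-block lies to the right of the $B$-block; the third formula then delivers $|u-v|\geq(2\alpha^i-1)2^{b-i}-k+2\geq 2(k-1)-k+2=k$, using $2\alpha^i-1\geq 2(k-1)/2^{b-i}$ obtained from the same defining inequality.

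The main obstacle is this last subcase: an $A$-block to the right of a $B$-block at block-offset exactly $2\alpha^i$. This is where the distance estimate is tight, and it is precisely this configuration that forces the choice $\alpha^i=\lceil(k-1+2^{b-i-1})/2^{b-i}\rceil$; all other regimes (same-type blocks, same-index AB/BA, or AB with $A$ to the left of $B$) carry strict slack and amount to routine bookkeeping.
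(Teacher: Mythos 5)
Your proposal is correct and takes essentially the same route as the paper's proof: reduce the biclique claim to the distance bound $\dist(L^{i,p},R^{i,p})\geq k$ (which also gives disjointness) and verify it from the interval endpoints of $A^{i,j}$, $B^{i,j}$ together with the defining inequality $\alpha^i 2^{b-i}\geq k-1+2^{b-i-1}$. Your explicit four-case analysis, in particular the tight subcase of an $A$-block to the right of a $B$-block at block-offset $2\alpha^i$, simply spells out what the paper compresses into its ordering remark and its ``shifting the indices'' reduction to $p=0$ and $l=1$, so nothing is missing.
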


\begin{proof} 
It is sufficient for us to show that $\{L, R\}$ is a biclique subgraph of $\mathcal{G}^c_{\mathcal{S}}$, where
\begin{align*}
    L &= \left(\bigcup_{q = 0}^{\lfloor \frac{2^i-1}{2 \alpha^i} \rfloor} A^{i,2q \alpha^i}\right) \cup \left(\bigcup_{q = 0}^{\lfloor \frac{2^i-1}{2 \alpha^i} - \frac{1}{2} \rfloor} B^{i, (2q+1) \alpha^i} \right) \\
    R &= \left(\bigcup_{q = 0}^{\lfloor \frac{2^i-1}{2 \alpha^i} \rfloor} B^{i,2q \alpha^i}\right) \cup \left(\bigcup_{q = 0}^{\lfloor \frac{2^i-1}{2 \alpha^i} - \frac{1}{2} \rfloor} A^{i, (2q+1) \alpha^i} \right)\\
\end{align*}

\noindent or equivalently
\begin{align*}
    L &= A^{i,0} \cup B^{i, \alpha^i} \cup A^{i, 2\alpha^i} \cup \hdots \\
    R &= B^{i,0} \cup A^{i, \alpha^i} \cup B^{i, 2\alpha^i} \cup \hdots
\end{align*}

In order to check whether $\{L, R\}$ is a biclique subgraph of the conflict graph of $\SOS k(N)$, it is sufficient to show that $\dist(L, R) \geq k$. A fact is that all the indices in $A^{i,j}$ are smaller than $B^{i, j}$. Also, the indices in $B^{i, j}$ are smaller than $A^{i, j+1}$. Also note that $\dist(A^{i,j}, B^{i,j}) \geq k$. Hence, we only need to prove $\dist(A^{i, (l-1)\alpha^i}, A^{i, l\alpha^i}) \geq k$ and $\dist(B^{i, (l-1)\alpha^i}, B^{i, l\alpha^i}) \geq k$.


We start with $l=1$. The indices in $A^{i, 0}$ are $\llbracket 1, 2^{b-i-1}\rrbracket$ and indices in $B^{i, 0}$ are $\llbracket 2^{b-i-1} + k, 2^{b-i} + k - 1\rrbracket$. The indices in $A^{i, \alpha^i}, B^{i, \alpha^i}$ are $\llbracket 1 + \alpha^i 2^{b-i}, (2\alpha^i +1) 2^{b-i-1}\rrbracket$ and $\llbracket (2\alpha^i +1) 2^{b-i-1} + k, (\alpha^i +1)2^{b-i} + k-1\rrbracket$, respectively. Since
\begin{align*}
    \alpha 2^{b-i} = \left\lceil \frac{k-1+2^{b-i-1}}{2^{b-i}} \right\rceil 2^{b-i} \geq k-1 + 2^{b-i-1},
\end{align*}

\noindent it is not hard to check that $\dist(A^{i, 0}, A^{i,\alpha^i}) \geq k$ and $\dist(B^{i, 0}, B^{i, \alpha^i}) \geq k$. By shifting the indices, we can get $\dist(A^{i, (l-1)\alpha^i}, A^{i, l\alpha^i}) \geq k$ and \\ $\dist(B^{i, (l-1)\alpha^i}, B^{i, l\alpha^i}) \geq k$ for all positive integer $l$.
\qed\end{proof}

\begin{lemma} \label{lm:bc_size}
Given two positive integers: $b \geq 1$ and $k \geq 2$,
\begin{align}
    \sum_{i=0}^{b-1} \min\left\{2^i, \left\lceil \frac{k-1+2^{b-i-1}}{2^{b-i}} \right\rceil\right\} \leq b + k - 2.
\end{align}
\end{lemma}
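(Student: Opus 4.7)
The plan is to prove the lemma by induction on $k$ with $b$ fixed. Set
\[
\alpha^i(k) := \left\lceil \frac{k-1+2^{b-i-1}}{2^{b-i}} \right\rceil = \left\lceil \frac{k-1}{2^{b-i}} + \frac{1}{2} \right\rceil, \qquad f(i,k) := \min\{2^i, \alpha^i(k)\},
\]
so the left-hand side is $S(b,k) := \sum_{i=0}^{b-1} f(i,k)$. For the base case $k = 2$, the argument of the ceiling equals $1/2 + 1/2^{b-i} \in (1/2, 1]$ for every $i \in \llbracket 0, b-1\rrbracket$, so $\alpha^i(2) = 1$ and $f(i,2) = 1$, giving $S(b, 2) = b = b + 2 - 2$ with equality.

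For the inductive step $k \geq 3$, it is enough to show $S(b, k) - S(b, k-1) \leq 1$. Since $\alpha^i(k)$ is non-decreasing in $k$ and $\min\{2^i, \cdot\}$ is 1-Lipschitz,
\[
f(i, k) - f(i, k-1) \leq \alpha^i(k) - \alpha^i(k-1) \in \{0, 1\}.
\]
It therefore suffices to prove that at most one index $i \in \llbracket 0, b-1\rrbracket$ satisfies $\alpha^i(k) - \alpha^i(k-1) = 1$. Incrementing $k$ shifts the argument of the ceiling in $\alpha^i$ by $1/2^{b-i}$, so $\alpha^i$ jumps precisely when an integer lies in the half-open interval $\bigl((k-2)/2^{b-i} + 1/2,\; (k-1)/2^{b-i} + 1/2\bigr]$; clearing denominators shows this is equivalent to the modular condition $k - 1 \equiv 2^{b-i-1} \pmod{2^{b-i}}$.

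If two distinct indices $i_1 < i_2$ both satisfied this condition, then reducing the $i_1$-congruence modulo $2^{b-i_2}$ (which divides $2^{b-i_1}$) and using $b - i_1 - 1 \geq b - i_2$ to conclude $2^{b-i_1-1} \equiv 0 \pmod{2^{b-i_2}}$ would give $k - 1 \equiv 0 \pmod{2^{b-i_2}}$, contradicting $k - 1 \equiv 2^{b-i_2-1} \not\equiv 0 \pmod{2^{b-i_2}}$. This yields the uniqueness claim, closes the induction, and hence establishes the lemma. The main obstacle is spotting the modular characterization of when $\alpha^i$ jumps; once it is written down, the incompatibility of these conditions across different $i$ and the 1-Lipschitz reduction are routine bookkeeping.
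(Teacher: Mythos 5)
Your proof is correct, but it follows a genuinely different route from the paper's. You fix $b$ and induct on $k$: the base case $k=2$ gives the sum $b$ exactly, and for the step you show that incrementing $k$ raises the whole sum by at most $1$, because the ceiling $\alpha^i(k)=\lceil (k-1)/2^{b-i}+1/2\rceil$ jumps only when $k-1\equiv 2^{b-i-1}\pmod{2^{b-i}}$, a condition that (being a statement about the lowest set bit of $k-1$) can hold for at most one index $i$; the $1$-Lipschitz reduction from $\min\{2^i,\alpha^i\}$ to $\alpha^i$ and the modular computation are both sound, and the induction hypothesis at $k-1\geq 2$ stays within the lemma's range. The paper instead argues non-inductively: it drops the $\min$ altogether (bounding each term by the ceiling), reindexes, truncates or extends the sum to $b'$ terms with $k\leq 2^{b'}+1$, and then evaluates $\sum_{i=1}^{b'}\lceil (k-1-2^{i-1})/2^i\rceil$ exactly as $k-2$ by expanding $k-2$ in binary and analyzing each ceiling digit by digit. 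The two arguments exploit the same underlying dyadic structure, but your carry-style induction is more local and arguably more transparent about where the slack arises (increments of $k$ whose jump index falls outside $\llbracket 0,b-1\rrbracket$ contribute nothing), whereas the paper's computation yields an exact closed-form evaluation of the relaxed sum in one pass at the cost of the somewhat delicate bookkeeping with $b'$ and the binary expansion.
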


See Appendix~\ref{ap:lm_proof} for the proof of Lemma~\ref{lm:bc_size}. After merging bicliques as described in Proposition~\ref{prop:sosk_ub2}, the biclique cover for the conflict graph of $\SOS k(N)$ has a size of no more than $b+k-2$, if we have $N = 2^b + k - 1$ and two integers $k \geq 2$, $b \geq 1$.

\begin{theorem} \label{thm:sosk_ub}
Assume that $N = 2^b + k - 1$, $k \geq 2$, and $b$ are positive integers. Let $J = \llbracket N\rrbracket $ and take $\mathcal{S}$ to correspond to the $\SOS k(N)$ constraint on $J$, i.e. $\mathcal{S} := \{\llbracket i, i+k-1\rrbracket \}_{i=1}^{2^b}$. As defined in Proposition~\ref{prop:sosk_ub2}, take $\alpha^i = \lceil \frac{k-1+2^{b-i-1}}{2^{b-i}} \rceil$. Then, $\{\{L^{i,p}, R^{i, p}\}: i \in \{0, \hdots, b-1\}, p \in \{0, \hdots, \min\{\alpha^i-1, 2^i-1\}\}\}$ is a biclique cover of the conflict graph $G^c_{\mathcal{S}}$ with size at most $b + k - 2$.
\end{theorem}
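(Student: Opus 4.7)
The plan is to decompose Theorem~\ref{thm:sosk_ub} into two subclaims: (i) the collection $\{\{L^{i,p},R^{i,p}\}\}$ covers every edge of the conflict graph $G^c_{\mathcal{S}}$, and (ii) the total number of such bicliques is at most $b+k-2$. Proposition~\ref{prop:sosk_ub2} already hands us the nontrivial fact that each $\{L^{i,p},R^{i,p}\}$ is a biclique subgraph of $G^c_{\mathcal{S}}$, so the work here is purely combinatorial bookkeeping.

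For subclaim (i), I would use the biclique cover $\{\{A^{i,j},B^{i,j}\}: i\in\llbracket 0,b-1\rrbracket, j\in\llbracket 0,2^i-1\rrbracket\}$ from Proposition~\ref{prop:sosk_ub} as a ``reference cover'' and show that each of its bicliques embeds into one of the merged $\{L^{i,p},R^{i,p}\}$. Fix $i$ and $j\in\llbracket 0,2^i-1\rrbracket$. Writing the Euclidean division $j = m\alpha^i + p$ with $p = j \bmod \alpha^i$, I would verify that $p$ lies in the allowed range $\llbracket 0,\min\{\alpha^i-1,2^i-1\}\rrbracket$: if $2^i \leq \alpha^i$ then $j < \alpha^i$ forces $m = 0$ and $p = j \leq 2^i-1$, while if $\alpha^i < 2^i$ then automatically $p \leq \alpha^i - 1$. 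Inspecting the formulas for $L^{i,p}$ and $R^{i,p}$, one sees that $A^{i,j}\subseteq L^{i,p}$ and $B^{i,j}\subseteq R^{i,p}$ when $m$ is even, while $A^{i,j}\subseteq R^{i,p}$ and $B^{i,j}\subseteq L^{i,p}$ when $m$ is odd. In either case every edge of $\{A^{i,j},B^{i,j}\}$ is an edge of $\{L^{i,p},R^{i,p}\}$. Combined with the fact that the reference cover actually covers $E(G^c_{\mathcal{S}})$, this yields the desired covering property.

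For subclaim (ii), simply count: for each $i\in\llbracket 0,b-1\rrbracket$ the index $p$ ranges over $\llbracket 0,\min\{\alpha^i-1,2^i-1\}\rrbracket$, contributing $\min\{\alpha^i,2^i\}$ bicliques. The total is $\sum_{i=0}^{b-1}\min\{\alpha^i,2^i\}$, which is at most $b+k-2$ by Lemma~\ref{lm:bc_size}.

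The main obstacle, as I see it, is not conceptual but notational: making the parity-and-remainder argument in subclaim (i) watertight and confirming that the remainder $p$ always lands inside the admissible range dictated by the $\min\{\alpha^i-1,2^i-1\}$ cap. Once that indexing step is pinned down, Proposition~\ref{prop:sosk_ub2} takes care of the nontrivial geometric content (that the merged sets really form bicliques in $G^c_{\mathcal{S}}$), and Lemma~\ref{lm:bc_size} takes care of the size bound, so the theorem follows in a short synthesis.
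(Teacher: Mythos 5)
Your proposal is correct and follows essentially the same route as the paper: the paper's proof simply cites Propositions~\ref{prop:sosk_ub} and~\ref{prop:sosk_ub2} for the covering property and Lemma~\ref{lm:bc_size} for the size bound. Your Euclidean-division/parity argument just makes explicit the embedding of each $\{A^{i,j},B^{i,j}\}$ into a merged biclique, a step the paper treats as immediate from the construction in Proposition~\ref{prop:sosk_ub2}.
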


\begin{proof}

By Propositions~\ref{prop:sosk_ub} and \ref{prop:sosk_ub2}, we know that $\{\{L^{i,p}, R^{i, p}\}: i \in \{0, \hdots, b-1\}, p \in \{0, \hdots, \min\{\alpha^i-1, 2^i-1\}\}\}$ is a biclique cover of the conflict graph $G^c_{\mathcal{S}}$. It is clear that the size of the biclique cover is at most $b+k-2$ by Lemma~\ref{lm:bc_size}.

\qed\end{proof}

In Theorem~\ref{thm:sosk_ub}, we provide an approach to build a biclique cover of the conlict graph of $\SOS k(N)$ if we have $N = 2^b+k-1$ and two integers $k \geq 2$, $b \geq 1$. In Theorem~\ref{thm:sosk_ub_all}, we show that it is easy to extend the result to $N > k \geq 2$ so that we are able to build small, ideal, and non-extended MIP formulations of $\SOS k(N)$.

\begin{theorem} \label{thm:sosk_ub_all}
Suppose that two integers $N > k \geq 2$ and $b = \lceil \log_2(N - k + 1) \rceil$. Let $\{\{L^{i,p}, R^{i, p}\}: i \in \{0, \hdots, b-1\}, p \in \{0, \hdots, \min\{\alpha^i-1, 2^i-1\}\}\}$ as defined in Proposition~\ref{prop:sosk_ub2}. The following is an ideal MIP formulation of $\SOS k(N)$:
\begin{subequations} \label{eq:sosk_mip}
\begin{alignat}{2}
& \sum_{v \in L^{i,p} \cap \llbracket N \rrbracket} \lambda_v \leq z_{i,p}, & \forall i \in \llbracket 0, b-1 \rrbracket, p \in \llbracket 0, \min\{\alpha^i-1, 2^i-1\} \rrbracket\\
& \sum_{v \in R^{i,p} \cap \llbracket N \rrbracket} \lambda_v \leq 1 - z_{i, p}, \quad & \forall i \in \llbracket 0, b-1 \rrbracket, p \in \llbracket 0, \min\{\alpha^i-1, 2^i-1\} \rrbracket\\
& z_{i, p} \in \{0, 1\}, \qquad & \forall i \in \llbracket 0, b-1 \rrbracket, p \in \llbracket 0, \min\{\alpha^i-1, 2^i-1\} \rrbracket,
\end{alignat}
\end{subequations}

\noindent where the number of binary variables in $z$ is at most $\lceil \log_2(N - k + 1) \rceil + k - 2$ and the number of constraints in~(\ref{eq:sosk_mip}) is at most $2\lceil \log_2(N - k + 1) \rceil + 2k - 4$.
\end{theorem}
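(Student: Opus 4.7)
The plan is to reduce the general case $N > k \geq 2$ to the special case handled by Theorem~\ref{thm:sosk_ub} via the subgraph embedding provided by Proposition~\ref{prop:larger_SOS}, and then invoke Corollary~\ref{cor:cdc_bc} to convert the resulting biclique cover into the stated ideal formulation.

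First, I would set $b = \lceil \log_2(N - k + 1)\rceil$ and consider the enlarged index set $N' = 2^b + k - 1 \geq N$. Let $\mathcal{S}'$ correspond to $\SOS k(N')$. Theorem~\ref{thm:sosk_ub} supplies a biclique cover $\{\{L^{i,p}, R^{i,p}\}: i \in \llbracket 0, b-1\rrbracket, p \in \llbracket 0, \min\{\alpha^i-1, 2^i-1\}\rrbracket\}$ of the conflict graph $G^c_{\mathcal{S}'}$ of size at most $b + k - 2$. By Proposition~\ref{prop:larger_SOS}, $G^c_{\mathcal{S}}$ is the induced subgraph of $G^c_{\mathcal{S}'}$ on the vertex set $\llbracket N\rrbracket$. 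Consequently, intersecting each biclique with $\llbracket N \rrbracket$ preserves the biclique property (the completeness of each bipartite piece is inherited, and any remaining edge is still present in the induced subgraph), and every edge of $G^c_{\mathcal{S}}$ is covered because it is an edge of $G^c_{\mathcal{S}'}$. Hence $\{\{L^{i,p} \cap \llbracket N\rrbracket,\, R^{i,p}\cap \llbracket N\rrbracket\}\}$ is a biclique cover of $G^c_{\mathcal{S}}$ of size at most $b + k - 2$.

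With this biclique cover in hand, I would directly appeal to Corollary~\ref{cor:cdc_bc} to write down the ideal, non-extended MIP formulation, which is exactly the system~(\ref{eq:sosk_mip}). Idealness follows because $\SOS k(N)$ admits a junction tree by Proposition~\ref{prop:SOSkN_tree}, hence is pairwise IB-representable, and Corollary~\ref{cor:cdc_bc} (which in turn rests on Proposition~\ref{prop:IB_MIP} and Proposition~\ref{prop:biclique_cover}) then guarantees idealness of the resulting formulation.

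Finally, the size bounds follow by counting: the index set $\{(i,p): i \in \llbracket 0, b-1\rrbracket,\, p \in \llbracket 0, \min\{\alpha^i-1, 2^i-1\}\rrbracket\}$ has cardinality $\sum_{i=0}^{b-1} \min\{2^i, \alpha^i\}$, which by Lemma~\ref{lm:bc_size} is at most $b + k - 2 = \lceil \log_2(N-k+1)\rceil + k - 2$. Each pair $(i,p)$ introduces one binary variable $z_{i,p}$ and two inequality constraints, giving the claimed bounds $\lceil \log_2(N-k+1)\rceil + k - 2$ binary variables and $2\lceil \log_2(N-k+1)\rceil + 2k - 4$ constraints. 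I do not anticipate any serious obstacle; the only subtle point is justifying that restriction of a biclique cover to an induced subgraph yields a biclique cover of that subgraph, but this is immediate from the definition of induced subgraph together with Proposition~\ref{prop:larger_SOS}.
\qed
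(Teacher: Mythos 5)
Your proposal is correct and follows essentially the same route as the paper's proof: invoke Theorem~\ref{thm:sosk_ub} for $N'=2^b+k-1$, restrict the bicliques to $\llbracket N\rrbracket$ via Proposition~\ref{prop:larger_SOS}, and apply Corollary~\ref{cor:cdc_bc}, with the size bound from Lemma~\ref{lm:bc_size}. Your added justification that restricting a biclique cover to the induced subgraph preserves the cover, and the explicit appeal to Proposition~\ref{prop:SOSkN_tree} for pairwise IB-representability, only make explicit steps the paper leaves implicit.
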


\begin{proof}
By Theorem~\ref{thm:sosk_ub}, $\{\{L^{i,p}, R^{i, p}\}: i \in \{0, \hdots, b-1\}, p \in \{0, \hdots, \min\{\alpha^i-1, 2^i-1\}\}\}$ is a biclique cover of the conflict graph of $\SOS k(2^b+k-1)$ with a size at most $b + k - 2$. By Proposition~\ref{prop:larger_SOS}, $\{\{L^{i,p} \cap \llbracket N \rrbracket, R^{i, p} \cap \llbracket N \rrbracket\}: i \in \{0, \hdots, b-1\}, p \in \{0, \hdots, \min\{\alpha^i-1, 2^i-1\}\}\}$ is a biclique cover of the conflict graph of $\SOS k(N)$. By Corollary~\ref{cor:cdc_bc}, \eqref{eq:sosk_mip} is an ideal MIP formulation of $\SOS k(N)$. 
\qed\end{proof}

We compare \eqref{eq:sosk_mip} with the MIP formulation obtained by Huchette and Vielma (Theorem 5 of~\cite{huchette2019combinatorial}) and we can claim that \eqref{eq:sosk_mip} uses fewer auxiliary binary variables for any $N > k \geq 2$. We also want to note that the number of auxiliary binary variables in~\eqref{eq:sosk_mip} is at most $\frac{C + 1}{3C}$ of that in the formulation of Huchette and Vielma~\cite{huchette2019combinatorial} in he regime where $k > C \lceil\log_2(N) \rceil$ for some $C > 0$.

\begin{proposition}\label{prop:sosk_less}
Given two integers $N > k \geq 2$, $\lceil \log_2(N - k + 1) \rceil + k - 2 < \lceil \log_2(\lceil N / k \rceil - 1) \rceil + 3k$.
\end{proposition}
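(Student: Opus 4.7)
The plan is to isolate the $\lceil\log_2(\lceil N/k\rceil - 1)\rceil$ term on the right and reduce the claim to a very weak inequality in $k$ alone. The key estimate I would establish first is
$$N - k + 1 \;\leq\; 2k\bigl(\lceil N/k \rceil - 1\bigr) \qquad \text{for all integers } N > k \geq 2.$$
This follows from the standard bound $N \leq k\lceil N/k \rceil$, which rearranges to $N - k + 1 \leq k(\lceil N/k\rceil - 1) + 1$. Because $N > k$ forces $\lceil N/k\rceil \geq 2$, we have $\lceil N/k\rceil - 1 \geq 1$, and hence
$$k(\lceil N/k \rceil - 1) + 1 \;\leq\; (k+1)(\lceil N/k \rceil - 1) \;\leq\; 2k(\lceil N/k \rceil - 1).$$

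Next, I would take $\log_2$ of both sides of the key estimate and apply the subadditivity of the ceiling function, $\lceil a + b \rceil \leq \lceil a\rceil + \lceil b\rceil$, noting that the additive $1$ can be pulled out since it is already an integer. This yields
$$\lceil \log_2(N - k + 1)\rceil \;\leq\; 1 + \lceil \log_2 k \rceil + \lceil \log_2(\lceil N/k \rceil - 1)\rceil.$$
Adding $k - 2$ to both sides gives
$$\lceil \log_2(N - k + 1)\rceil + k - 2 \;\leq\; \lceil \log_2(\lceil N/k \rceil - 1)\rceil + \lceil \log_2 k \rceil + k - 1.$$
Comparing with the right-hand side $\lceil \log_2(\lceil N/k \rceil - 1)\rceil + 3k$ of the proposition, the claim reduces to the strict inequality $\lceil \log_2 k \rceil + k - 1 < 3k$, or equivalently $\lceil \log_2 k \rceil < 2k + 1$. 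This is trivial for $k \geq 2$ since $\log_2 k \leq k$.

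There is no substantial obstacle in this argument; the only point that requires a moment's care is verifying that $\lceil N/k\rceil - 1 \geq 1$ so that the passage from $k(\lceil N/k\rceil - 1) + 1$ to $(k+1)(\lceil N/k\rceil - 1)$ is valid, and this is exactly what the hypothesis $N > k$ supplies. The resulting bound is in fact quite loose, which reflects the fact that our formulation uses $\lceil \log_2(N-k+1) \rceil + k - 2$ binary variables while the formulation from \cite{huchette2019combinatorial} uses roughly $\lceil \log_2(N/k) \rceil + 3k$, so the gap is of order $2k + \log_2 k$.
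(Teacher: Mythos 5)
Your proof is correct. At its core it follows the same strategy as the paper's: both arguments show that the two logarithmic terms differ by at most $\lceil \log_2 k\rceil + 1$ and then absorb this into the $2k$ of slack between $k-2$ and $3k$ via $\lceil \log_2 k\rceil \leq k$. Where you differ is in how that comparison is established. You first prove the purely integer inequality $N-k+1 \leq 2k(\lceil N/k\rceil - 1)$ (from $N \leq k\lceil N/k\rceil$ and $\lceil N/k\rceil \geq 2$), and only then pass to logarithms, using monotonicity and subadditivity of the ceiling, $\lceil a+b\rceil \leq \lceil a\rceil + \lceil b\rceil$. The paper instead works downward from the right-hand side: it rewrites $\lceil N/k\rceil - 1 = \lceil (N-k)/k\rceil$, drops the inner ceiling, splits the logarithm, and uses $\lceil x - y\rceil \geq \lceil x\rceil - \lceil y\rceil$ together with $\lceil \log_2 x\rceil \geq \lceil \log_2(x+1)\rceil - 1$ to reach essentially the same intermediate bound, $\lceil\log_2(\lceil N/k\rceil-1)\rceil \geq \lceil\log_2(N-k+1)\rceil - 1 - \lceil\log_2 k\rceil$, before finishing with $\lceil\log_2 k\rceil \leq k$. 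Your route is slightly more elementary in that all the delicate work happens at the level of integers before any logarithm appears, which avoids manipulating ceilings of differences of logs; the paper's route keeps the chain as a single sequence of bounds on the right-hand side. Both close the argument with the same strict inequality in $k$ alone, so the proofs are interchangeable in strength.
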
 
\begin{proposition} \label{prop:sosk_proportion}
Given two integers $N > k \geq 2$ and $k > C \lceil \log_2(N) \rceil$ for some $C > \frac{1}{2}$, $$\frac{\lceil \log_2(N - k + 1) \rceil + k - 2}{\lceil \log_2(\lceil N / k \rceil - 1) \rceil + 3k} < \frac{C + 1}{3 C} < 1.$$
\end{proposition}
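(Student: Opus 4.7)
The plan is to prove Proposition~\ref{prop:sosk_proportion} by two independent arithmetic steps: first bound the ratio by $(C+1)/(3C)$ using elementary inequalities on the numerator and denominator, then verify $(C+1)/(3C) < 1$ from $C > 1/2$. Let $L := \lceil \log_2(N) \rceil$ so that the hypothesis reads $k > CL$.

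For the numerator, since $k \geq 2$ we have $N - k + 1 \leq N - 1 < N$, and monotonicity of $\log_2$ and of $\lceil \cdot \rceil$ yields $\lceil \log_2(N - k + 1) \rceil \leq L$. Hence the numerator is at most $L + k - 2$. For the denominator, the assumption $N > k$ gives $\lceil N/k \rceil \geq 2$, so $\lceil N/k \rceil - 1 \geq 1$ and $\lceil \log_2(\lceil N/k\rceil - 1)\rceil \geq 0$; thus the denominator is at least $3k$. Combining,
\begin{equation*}
\frac{\lceil \log_2(N - k + 1) \rceil + k - 2}{\lceil \log_2(\lceil N / k \rceil - 1) \rceil + 3k} \;\leq\; \frac{L + k - 2}{3k} \;<\; \frac{L}{3k} + \frac{1}{3}.
\end{equation*}

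The hypothesis $k > CL$ together with $C > 0$ implies $L/k < 1/C$, so $L/(3k) < 1/(3C)$, and therefore the right-hand side is strictly less than $1/(3C) + 1/3 = (C+1)/(3C)$, giving the first strict inequality in the proposition. (Note $N > k \geq 2$ forces $N \geq 3$, so $L \geq 2$, in particular $L \geq 1$, which is all we need for the step $L/k < 1/C$ to be meaningful.)

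For the second inequality $(C+1)/(3C) < 1$, multiplying through by the positive quantity $3C$ reduces it to $C + 1 < 3C$, i.e., $2C > 1$, which is exactly the standing assumption $C > 1/2$. I do not anticipate any real obstacle: the argument is an assembly of monotonicity bounds, and the only subtle point is checking that the denominator bound $3k$ is legitimate (i.e., that $\lceil N/k\rceil - 1 \geq 1$ so the ceiling of the log is nonnegative), which follows immediately from $N > k$.
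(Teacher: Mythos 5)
Your proposal is correct and follows essentially the same route as the paper's proof: bound the numerator by $\lceil \log_2(N) \rceil + k$ (discarding the $-2$), bound the denominator below by $3k$ using $\lceil N/k \rceil \geq 2$, apply $k > C\lceil \log_2(N) \rceil$ to reach $\frac{C+1}{3C}$, and conclude from $C > \frac{1}{2}$ that this is less than $1$. The only difference is cosmetic bookkeeping (splitting off the $\frac{1}{3}$ before invoking the hypothesis rather than after), so no further comment is needed.
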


See Appendix~\ref{ap:proof_sosk_number} for the proofs of Propositions~\ref{prop:sosk_less} and~\ref{prop:sosk_proportion}.

\section{New Ideal Formulations of Combinatorial Disjunctive Constraints via Junction Trees}





We now introduce a procedure to represent any combinatorial disjunctive constraint $\lambda \in \CDC(\mathcal{S})$ by a new constraint $\lambda' \in \CDC(\mathcal{S}')$ admitting a junction tree at a cost of potentially using more continuous variables. Algorithm~\ref{alg:build_CDC} demonstrates how to obtain $\mathcal{S}'$ from $\mathcal{S}$ and a mapping function $\alpha$ to relate the indices in $\lambda'$ and $\lambda$. Proposition~\ref{prop:reformulate_cdc_tree} shows that $\CDC(\mathcal{S}')$ returned by Algorithm~\ref{alg:build_CDC} admits a junction tree and how to use $\CDC(\mathcal{S}')$ to formulate $\CDC(\mathcal{S})$. We eventually end up with two ideal formulations which use less variables than Jeroslow and Lowe's formulation in~\eqref{form:cdc_jl}.

\begin{algorithm}[h]
\begin{algorithmic}[1]
\State \textbf{Input}: A combinatorial disjunctive constraint $\CDC(\mathcal{S})$ where $\mathcal{S} := \{S^i\}_{i=1}^d$; A boolean variable $\disjoint \in \{\True, \False\}$. \Comment{If $\disjoint = \True$, we construct a $\mathcal{S}'$ with pairwise disjoint index sets to represent $\CDC(\mathcal{S})$. Otherwise, we a $\CDC(\mathcal{S}')$ admitting a junction tree but not necessary with pairwise disjoint index sets in $\mathcal{S}'$.}
\State \textbf{Output}: A collection of index sets $\mathcal{S}'$ and a mapping function $\alpha: \cup_{S' \in \mathcal{S}'} S' \rightarrow \cup_{S \in \mathcal{S}} S$.
\State $c := 0$. \label{ln:build_CDC_start}
\For{$i \in \llbracket d \rrbracket$}
\State $S'^{,i} := \{c+1, \hdots, c+|S^i|\}$. 
\State $c := c + |S^i|$.
\State $\alpha(S'^{,i}_l) := S^i_l$ for all $l \in \llbracket |S^i| \rrbracket$.
\EndFor \label{ln:build_CDC_end}
\If{$\disjoint$}
\State Let $\mathcal{T}'$ be a tree where the vertices are $\mathcal{S}' := \{S'^{,i}\}_{i=1}^d$ and edges are $\{S'^{,i}S'^{,i+1}: i \in \llbracket d-1 \rrbracket\}$.
\State \textbf{return} $\mathcal{S}', \alpha$, $\mathcal{T}'$
\EndIf
\State Construct an intersection graph $\mathcal{K}_{\mathcal{S}}$ of the disjunctive constraint $\CDC(\mathcal{S})$.
\State Calculate a maximum spanning tree of $\mathcal{K}_{\mathcal{S}}$, $\mathcal{T}$. \label{ln:build_CDC_tree}
\State $\mathcal{S}_{\select} := \{S^1\}$.
\While{$|\mathcal{S}_{\select}| < d$}
\State Find an edge $e \in \mathcal{T}$ such that its one end $S^i \in \mathcal{S}_{\select}$ and the other end $S^j \in \mathcal{S} \setminus \mathcal{S}_{\select}$.
\State $\mathcal{S}_{\select} := \mathcal{S}_{\select} \cup \{S^j\}$.
\State For every $x \in S'^{,i}$ and $y \in S'^{,j}$, if $\alpha(x) = \alpha(y)$, then $S'^{,j} := S'^{,j} \cup \{x\} \setminus \{y\}$.
\EndWhile
\State Let $\mathcal{S}' := \{S'^{,i}\}_{i=1}^d$. Remove $\alpha(x)$ for all $x \not\in \cup_{S' \in \mathcal{S}'} S'$.
\State Let $\mathcal{T}'$ be a tree by replacing all the vertices $S^i$ to $S'^{,i}$ in the tree $\mathcal{T}$.
\State \textbf{return} $\mathcal{S}', \alpha$, $\mathcal{T}'$
\end{algorithmic}
\caption{Find a $\CDC(\mathcal{S}')$ admitting a junction tree to represent an arbitrary combinatorial disjunctive constraint $\CDC(\mathcal{S})$.} \label{alg:build_CDC}
\end{algorithm}

\begin{proposition} \label{prop:reformulate_cdc_tree}
Given a combinatorial disjunctive constraint $\lambda \in \CDC(\mathcal{S})$ where $\mathcal{S} := \{S^i\}_{i=1}^d$, let $\mathcal{S}'$, $\alpha$, and $\mathcal{T}'$ be the output of Algorithm~\ref{alg:build_CDC} with $\CDC(\mathcal{S})$ and $\disjoint \in \{\False, \True\}$ as inputs. Then, $\mathcal{T}'$ is a junction tree of $\CDC(\mathcal{S}')$ and
\begin{align} \label{eq:cdc_proj}
    \CDC(\mathcal{S}) = \Proj_{\lambda} \left\{(\lambda, \lambda'): \lambda_v = \sum_{u: \alpha(u) = v} \lambda'_u, \quad \lambda' \in \CDC(\mathcal{S}') \right\}.
\end{align}
\end{proposition}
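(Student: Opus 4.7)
The plan is to establish the two claims of Proposition~\ref{prop:reformulate_cdc_tree} in turn: that $\mathcal{T}'$ is a junction tree of $\CDC(\mathcal{S}')$, and that the projection identity~\eqref{eq:cdc_proj} holds. I would first dispose of the easy case $\disjoint = \True$: here the sets $\{S'^{,i}\}_{i=1}^d$ are pairwise disjoint by construction, so $S'^{,i} \cap S'^{,j} = \emptyset$ for distinct $i,j$, and any spanning tree on $\mathcal{S}'$ (in particular the path returned by the algorithm) satisfies the junction tree condition trivially.

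For the case $\disjoint = \False$, the key structural claim I would establish is the following: for each original element $v \in \bigcup_{S \in \mathcal{S}} S$, let $H_v$ denote the subgraph of the maximum spanning tree $\mathcal{T}$ induced by $\{S^i : v \in S^i\}$, and let $C_1^v, \ldots, C_{m_v}^v$ be its connected components. After the while loop in Algorithm~\ref{alg:build_CDC}, within each component $C_\ell^v$ all the sets $S'^{,i}$ with $S^i \in C_\ell^v$ share a common index representing $v$, while distinct components yield distinct indices. I would prove this by induction on the order in which vertices are added to $\mathcal{S}_{\select}$, using that the merging step replaces $y \in S'^{,j}$ with $x \in S'^{,i}$ precisely when $\alpha(x) = \alpha(y)$ and $S^iS^j$ is a spanning tree edge. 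Given this structure, the junction tree property follows by invoking Proposition~\ref{prop:check_junction_tree}: for any edge $e \in \mathcal{T}'$ separating $\mathcal{S}'$ into $\mathcal{S}'_1, \mathcal{S}'_2$, any index $x$ appearing in both $\bigcup_{S' \in \mathcal{S}'_1} S'$ and $\bigcup_{S' \in \mathcal{S}'_2} S'$ must, by the structural claim, lie inside a single component $C_\ell^{\alpha(x)}$ whose vertex set spans $e$; consequently the endpoints of $e$ both contain $x$, putting $x \in \MID(e)$.

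For the projection identity, I would prove both inclusions. In the forward direction, given $\lambda \in Q(S^{i^*}) \subseteq \CDC(\mathcal{S})$, set $\lambda'_u := \lambda_{\alpha(u)}$ for $u \in S'^{,i^*}$ and $\lambda'_u := 0$ otherwise; using that $\alpha$ restricted to $S'^{,i^*}$ is a bijection onto $S^{i^*}$, one checks $\lambda' \in Q(S'^{,i^*}) \subseteq \CDC(\mathcal{S}')$ and verifies $\lambda_v = \sum_{u:\alpha(u)=v} \lambda'_u$ for every $v$ by splitting into the cases $v \in S^{i^*}$ and $v \notin S^{i^*}$. In the reverse direction, given $(\lambda, \lambda')$ in the lifted set with $\lambda' \in Q(S'^{,i^*})$, the coupling constraint forces $\lambda_v = 0$ whenever $v \notin S^{i^*}$ (since no $u$ with $\alpha(u) = v$ lies in $S'^{,i^*}$), while $\sum_v \lambda_v = \sum_u \lambda'_u = 1$ and $\lambda \geq 0$, so $\lambda \in Q(S^{i^*}) \subseteq \CDC(\mathcal{S})$.

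The main obstacle is establishing rigorously the structural description of how indices propagate through the while loop, since it requires tracking merges through the full BFS traversal and is precisely the point at which the construction forces $\mathcal{T}'$ to be a junction tree rather than merely a spanning tree. Once the structural claim is in hand, both the junction tree property and the projection identity reduce to clean combinatorial verifications.
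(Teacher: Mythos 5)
Your proposal is correct and follows essentially the same route as the paper: the $\disjoint=\True$ case is dispatched trivially via disjointness, the $\disjoint=\False$ case reduces to the edge-separation criterion of Proposition~\ref{prop:check_junction_tree}, and the projection identity~\eqref{eq:cdc_proj} is proved disjunct-by-disjunct using the bijection that $\alpha$ induces between $S'^{,i}$ and $S^i$. The only difference is that you spell out, via the component structure of the sets containing each original element $v$ in the maximum spanning tree, the index-propagation argument showing each cut edge of $\mathcal{T}'$ satisfies the separation condition, a step the paper's proof leaves as ``not hard to see''; this is a welcome (and correct) elaboration rather than a different approach.
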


\begin{proof}

In the case that $\disjoint := \False$, it is easy to see that $\mathcal{T}'$ is a junction tree of $\CDC(\mathcal{S}')$ where the vertices are $\mathcal{S}' := \{S'^{,i}\}_{i=1}^d$ and edges are $\{S'^{,i}S'^{,i+1}: i \in \llbracket d-1 \rrbracket\}$. It is also not hard to see that~\eqref{eq:cdc_proj} holds in this case.

In the other case, by replacing all the vertices $S^i$ to $S'^{,i}$ in the tree $\mathcal{T}$ to get $\mathcal{T}'$ (also change the middle sets and weights on edges), then it is not hard to see that each edge $e$ in $\mathcal{T}'$ separate vertices into $\mathcal{S}'_1$ and $\mathcal{S}'_2$ such that $\bigcup_{S \in \mathcal{S}'_1} S \cap \bigcup_{S \in \mathcal{S}'_2} S \subseteq \MID(e)$. By Proposition~\ref{prop:check_junction_tree}, we know that $\mathcal{T}'$ is a junction tree of $\CDC(\mathcal{S})$.

Given an arbitrary $i \in \llbracket d \rrbracket$, if $\lambda \in Q(S^i)$, then we let $\lambda' \in Q(S'^{,i})$ such that $\lambda'_u = \lambda_v$ for $u \in S'^{,i}$ and $\alpha(u) = v$. Since there exists an one-to-one corresponding between the elements in $S^i$ and $S'^{,i}$, then we can prove ``$\subseteq$" in \eqref{eq:cdc_proj}. The proof of ``$\supseteq$" in \eqref{eq:cdc_proj} follows the same manner. Suppose that $\lambda' \in Q(S'^{,i})$, then we can prove that $\lambda \in Q(S^i)$ where $\lambda_v = \sum_{u: \alpha(u) = v} \lambda'_u$.
\qed\end{proof}

Then, we can write down a formulation of $\CDC(\mathcal{S})$ by using $\CDC(\mathcal{S}')$.

\begin{remark} \label{rm:tree1}
Let $\mathcal{S}'$, $\alpha$, and $\mathcal{T}'$ be the output of Algorithm~\ref{alg:build_CDC} with $\CDC(\mathcal{S})$ and $\disjoint := \False$ as inputs. Let $\{\{A'^{,i}, B'^{,i}\}\}_{i=1}^t$ be the biclique cover of the conflict graph of $\CDC(\mathcal{S}')$ returned by Algorithm~\ref{alg:biclique}. Denote that $J' = \bigcup_{S' \in \mathcal{S}'} S'$. Then an ideal formulation of $\CDC(\mathcal{S})$ can be written as
\begin{subequations} \label{form:cdc_tree}
\begin{alignat}{2}
    &\lambda_v = \sum_{u: \alpha(u) = v} \lambda'_u, & \forall v \in J \label{form:cdc_tree_a}\\
    & \sum_{v \in A'^{,j}} \lambda_v \leq z_j, & \forall j \in \llbracket t\rrbracket\\
    & \sum_{v \in B'^{,j}} \lambda_v \leq 1 - z_j, \quad & \forall j \in \llbracket t\rrbracket\\
    & \lambda' \in \Delta^{J'} \\
    & z_j \in \{0, 1\}, & \forall j \in \llbracket t\rrbracket.
\end{alignat}
\end{subequations}

If we project $\lambda$ out using \eqref{form:cdc_tree_a}, the number of auxiliary continuous variables is $\sum_{S \in \mathcal{S}} |S| - w(\mathcal{T}') - |\bigcup_{S \in \mathcal{S}} S|$. The number of binary variables is $t$, which is no more than $|\mathcal{S}'|-1 = |\mathcal{S}|-1$. The number of constraints is $2t$ besides the variable bounds. Note that $t$ is potentially $O(\log_2(|\mathcal{S}|))$, although we can only provide a theoretical bound of $|\mathcal{S}|-1$ as in Theorem~\ref{thm:alg_sep_bound_time}.
\end{remark}

Additionally, we can write down a logarithmic ideal MIP formulation of $\CDC(\mathcal{S})$ by introducing enough continuous variables to $\mathcal{S}$ so that each pair of index sets is disjoint (Algorithm~\ref{alg:build_CDC} with $\disjoint = \True$). In this way, although we need to introduce $\sum_{S \in \mathcal{S}} |S| - |\bigcup_{S \in \mathcal{S}} S|$ auxiliary continuous variables, the formulation only contains $\lceil \log_2(|\mathcal{S}|) \rceil$ binary variables.


\begin{remark} \label{rm:tree2}
Let $\mathcal{S}''$,  $\alpha''$, and $\mathcal{T}''$ be the output of Algorithm~\ref{alg:build_CDC} with $\CDC(\mathcal{S})$ and $\disjoint := \True$ as inputs. Also, let $\{\{A''^{,i}, B''^{,i}\}\}_{i=1}^{t''}$ be the biclique cover of the conflict graph of $\CDC(\mathcal{S}'')$ returned by Algorithm~\ref{alg:biclique}. Denote that $J'' = \bigcup_{S'' \in \mathcal{S}''} S''$. Then an ideal formulation of $\CDC(\mathcal{S})$ can be written as
\begin{subequations} \label{form:cdc_tree_2}
\begin{alignat}{2}
    &\lambda_v = \sum_{u: \alpha''(u) = v} \lambda''_u, & \forall v \in J \label{form:cdc_tree_2_a}\\
    & \sum_{v \in A''^{,j}} \lambda_v \leq z_j, & \forall j \in \llbracket t''\rrbracket\\
    & \sum_{v \in B''^{,j}} \lambda_v \leq 1 - z_j, \quad & \forall j \in \llbracket t''\rrbracket \\
    & \lambda'' \in \Delta^{J''} \\
    & z_j \in \{0, 1\}, & \forall j \in \llbracket t''\rrbracket.
\end{alignat}
\end{subequations}

This formulation has $\sum_{S \in \mathcal{S}} |S| - |\bigcup_{S \in \mathcal{S}} S|$ continuous variables if we project $\lambda$ out using \eqref{form:cdc_tree_2_a}. It introduces $w(\mathcal{T}')$ more continuous variables than \eqref{form:cdc_tree}, where $\mathcal{T}'$ is defined in Remark~\ref{rm:tree1}. However, the number of binary variables in~\eqref{form:cdc_tree_2} is only $t'' = \lceil \log_2(|\mathcal{S}|)\rceil$.\footnote{Since all the sets in $\mathcal{S}''$ are disjointed with each other, the bicliques in the same ``level" in the recursion of Algorithm~\ref{alg:biclique_sep} can be merged into one and there are only $\lceil \log_2(|\mathcal{S}|)\rceil$ levels.} It also has the same number of auxiliary continuous and binary variables as \eqref{form:cdc_vielma}. However, \eqref{form:cdc_tree_2} has $2\lceil \log_2(|\mathcal{S}|)\rceil$ inequality constraints, whereas \eqref{form:cdc_vielma} uses $\lceil \log_2(|\mathcal{S}|)\rceil$ equality constraints.

\end{remark}



\section{More Applications}

In this section, we discuss two more applications of CDCs admitting junction trees: univariate piecewise linear functions and planar obstacle avoidance.

\subsection{Univariate Piecewise Linear Functions}

Suppose that we are interested in formulating a univariate piecewise linear function $y = f(x)$, where $f$ has $N$ breakpoints: $x_1 < x_2 < \hdots < x_N$. Then, $\{(x, y): y=f(x), x \in [x_1, x_N]\}$ can be modeled by a special ordered set type 2, $\SOS 2(N)$:
\begin{subequations} \label{eq:pwl_sos2}
\begin{alignat}{2}
    & y = \sum_{v=1}^N \lambda_v f(x_v), \qquad & x = \sum_{v=1}^N \lambda_v x_v\\
    & \lambda \in \SOS 2(N), & x, y \in \mathbb{R}.
\end{alignat}
\end{subequations}

Clearly, $\SOS 2(N)$ is a CDC admitting a junction tree. A MIP formulation for $\SOS 2(N)$ with logarithmic binary variables and constraints can be derived from~\eqref{eq:sosk_mip} in Theorem~\ref{thm:sosk_ub_all} with $k=2$. We note that this formulation~\eqref{eq:sosk_mip} for $\SOS 2(N)$ is one of the MIP formulations in~\cite[Theorem 4]{vielma2011modeling} and is equivalent to the \textit{logarithmic independent
branching} (LogIB) formulation in~\cite{huchette2017nonconvex}, which can be derived from~\cite[Proposition 5]{huchette2019combinatorial} with the binary reflected Gray code as an input.

\subsection{Planar Obstacle Avoidance}

Consider a routing problem of a robot or unmanned aerial vehicle (UAV) where the robot or vehicle must avoid some fixed number of obstacles in a bounded planar area. To formulate the obstacle avoidance constraint, the location of the robot or vehicle $x^t \in \mathbb{R}^2$ must stay in the obstacle-free area at any given time $t$: $x^t \in \Omega$, where $\Omega$ describes a bounded (nonconvex) obstacle-free region. Modeling this constraint into a MIP formulation of path planning problem draws attention from both the operations research~\cite{huchette2019combinatorial} and robotics~\cite{mellinger2012mixed,deits2014footstep,deits2015efficient,prodan2016mixed} communities.The obstacle-free region $\Omega$ can be represented by a union of a finite and fixed number of polyhedra $P^i$ for $i \in \llbracket d \rrbracket$ as shown in~\eqref{eq:dc_poly}. We note that there are many ways to partition $\Omega$, i.e., the inner areas of $P^i$ and $P^j$ are disjoint: $\relint(P^i) \cap \relint(P^j) = \emptyset$ for $i \neq j$.


\begin{figure}[H]
    \centering
\begin{minipage}{0.5\textwidth}
\tikzset{every picture/.style={line width=0.75pt}} 
\begin{tikzpicture}[x=0.75pt,y=0.75pt,yscale=-1,xscale=1, scale=0.75]

\draw  [fill={rgb, 255:red, 0; green, 0; blue, 0 }  ,fill opacity=0.3 ] (107.61,35.91) -- (219.26,40.39) -- (167.24,69.82) -- (61.51,65.01) -- cycle ;
\draw  [fill={rgb, 255:red, 0; green, 0; blue, 0 }  ,fill opacity=0.3 ] (219.26,40.39) -- (276.62,131.28) -- (219.71,123.26) -- (167.24,69.82) -- cycle ;
\draw  [fill={rgb, 255:red, 0; green, 0; blue, 0 }  ,fill opacity=0.3 ] (276.62,131.28) -- (222.06,221.96) -- (181.21,193.57) -- (219.71,123.26) -- cycle ;
\draw  [fill={rgb, 255:red, 0; green, 0; blue, 0 }  ,fill opacity=0.3 ] (181.21,193.57) -- (222.06,221.96) -- (112.17,242.27) -- (113.84,198.82) -- (142.03,181.63) -- cycle ;
\draw  [fill={rgb, 255:red, 0; green, 0; blue, 0 }  ,fill opacity=0.3 ] (36.49,201.75) -- (58.21,176.12) -- (113.84,198.82) -- (112.17,242.27) -- (49.62,230.67) -- cycle ;
\draw  [fill={rgb, 255:red, 0; green, 0; blue, 0 }  ,fill opacity=0.3 ] (159.62,142.7) -- (181.21,193.57) -- (142.03,181.63) -- (131.96,151.9) -- cycle ;
\draw    (6.82,134.66) -- (36.49,201.75) ;
\draw    (61.51,65.01) -- (6.82,134.66) ;
\draw  [fill={rgb, 255:red, 0; green, 0; blue, 0 }  ,fill opacity=0.3 ] (62.25,108.85) -- (60.98,140.84) -- (21.65,168.2) -- (6.82,134.66) -- (24.26,115.02) -- cycle ;
\draw  [fill={rgb, 255:red, 0; green, 0; blue, 0 }  ,fill opacity=0.3 ] (124.24,110.36) -- (96.97,142.13) -- (60.98,140.84) -- (62.25,108.85) -- cycle ;
\draw  [fill={rgb, 255:red, 0; green, 0; blue, 0 }  ,fill opacity=0.3 ] (124.24,110.36) -- (159.62,142.7) -- (131.96,151.9) -- (96.97,142.13) -- cycle ;

\draw (25,123) node [anchor=north west][inner sep=0.75pt]   [align=left] {$\displaystyle S^{1}$};
\draw (73,115) node [anchor=north west][inner sep=0.75pt]   [align=left] {$\displaystyle S^{2}$};
\draw (119,124) node [anchor=north west][inner sep=0.75pt]   [align=left] {$\displaystyle S^{3}$};
\draw (145,156) node [anchor=north west][inner sep=0.75pt]   [align=left] {$\displaystyle S^{4}$};
\draw (145,201) node [anchor=north west][inner sep=0.75pt]   [align=left] {$\displaystyle S^{5}$};
\draw (67,199) node [anchor=north west][inner sep=0.75pt]   [align=left] {$\displaystyle S^{6}$};
\draw (122,42) node [anchor=north west][inner sep=0.75pt]   [align=left] {$\displaystyle S^{9}$};
\draw (219,157) node [anchor=north west][inner sep=0.75pt]   [align=left] {$\displaystyle S^{7}$};
\draw (211,77) node [anchor=north west][inner sep=0.75pt]   [align=left] {$\displaystyle S^{8}$};
\end{tikzpicture}
\end{minipage}
\begin{minipage}{0.49\textwidth}
\tikzset{every picture/.style={line width=0.75pt}} 
\begin{tikzpicture}[x=0.75pt,y=0.75pt,yscale=-1,xscale=1,scale=0.8]

\draw   (24.87,105.57) .. controls (24.87,96.42) and (32.28,89) .. (41.43,89) .. controls (50.58,89) and (58,96.42) .. (58,105.57) .. controls (58,114.72) and (50.58,122.13) .. (41.43,122.13) .. controls (32.28,122.13) and (24.87,114.72) .. (24.87,105.57) -- cycle ;
\draw   (77.87,97.57) .. controls (77.87,88.42) and (85.28,81) .. (94.43,81) .. controls (103.58,81) and (111,88.42) .. (111,97.57) .. controls (111,106.72) and (103.58,114.13) .. (94.43,114.13) .. controls (85.28,114.13) and (77.87,106.72) .. (77.87,97.57) -- cycle ;
\draw   (120.87,120.57) .. controls (120.87,111.42) and (128.28,104) .. (137.43,104) .. controls (146.58,104) and (154,111.42) .. (154,120.57) .. controls (154,129.72) and (146.58,137.13) .. (137.43,137.13) .. controls (128.28,137.13) and (120.87,129.72) .. (120.87,120.57) -- cycle ;
\draw   (158.87,157.57) .. controls (158.87,148.42) and (166.28,141) .. (175.43,141) .. controls (184.58,141) and (192,148.42) .. (192,157.57) .. controls (192,166.72) and (184.58,174.13) .. (175.43,174.13) .. controls (166.28,174.13) and (158.87,166.72) .. (158.87,157.57) -- cycle ;
\draw   (156.87,214.57) .. controls (156.87,205.42) and (164.28,198) .. (173.43,198) .. controls (182.58,198) and (190,205.42) .. (190,214.57) .. controls (190,223.72) and (182.58,231.13) .. (173.43,231.13) .. controls (164.28,231.13) and (156.87,223.72) .. (156.87,214.57) -- cycle ;
\draw   (73.87,195.57) .. controls (73.87,186.42) and (81.28,179) .. (90.43,179) .. controls (99.58,179) and (107,186.42) .. (107,195.57) .. controls (107,204.72) and (99.58,212.13) .. (90.43,212.13) .. controls (81.28,212.13) and (73.87,204.72) .. (73.87,195.57) -- cycle ;
\draw   (236.87,163.57) .. controls (236.87,154.42) and (244.28,147) .. (253.43,147) .. controls (262.58,147) and (270,154.42) .. (270,163.57) .. controls (270,172.72) and (262.58,180.13) .. (253.43,180.13) .. controls (244.28,180.13) and (236.87,172.72) .. (236.87,163.57) -- cycle ;
\draw   (214.87,79.57) .. controls (214.87,70.42) and (222.28,63) .. (231.43,63) .. controls (240.58,63) and (248,70.42) .. (248,79.57) .. controls (248,88.72) and (240.58,96.13) .. (231.43,96.13) .. controls (222.28,96.13) and (214.87,88.72) .. (214.87,79.57) -- cycle ;
\draw   (139.87,30.57) .. controls (139.87,21.42) and (147.28,14) .. (156.43,14) .. controls (165.58,14) and (173,21.42) .. (173,30.57) .. controls (173,39.72) and (165.58,47.13) .. (156.43,47.13) .. controls (147.28,47.13) and (139.87,39.72) .. (139.87,30.57) -- cycle ;
\draw    (58,105.57) -- (77.87,97.57) ;
\draw    (121.67,113.5) -- (110,105.09) ;
\draw    (162.67,147.17) -- (149.33,132.83) ;
\draw    (173.43,198) -- (173,174.5) ;
\draw    (156.87,214.57) -- (105.67,201.5) ;
\draw    (189.33,211.17) -- (241,174.83) ;
\draw    (236.33,95.17) -- (250.33,147.17) ;
\draw    (170.67,39.5) -- (217,70.5) ;

\draw (33,95) node [anchor=north west][inner sep=0.75pt]   [align=left] {$\displaystyle S^{1}$};
\draw (86,87) node [anchor=north west][inner sep=0.75pt]   [align=left] {$\displaystyle S^{2}$};
\draw (129,110) node [anchor=north west][inner sep=0.75pt]   [align=left] {$\displaystyle S^{3}$};
\draw (167,147) node [anchor=north west][inner sep=0.75pt]   [align=left] {$\displaystyle S^{4}$};
\draw (165,204) node [anchor=north west][inner sep=0.75pt]   [align=left] {$\displaystyle S^{5}$};
\draw (82,185) node [anchor=north west][inner sep=0.75pt]   [align=left] {$\displaystyle S^{6}$};
\draw (245,153) node [anchor=north west][inner sep=0.75pt]   [align=left] {$\displaystyle S^{7}$};
\draw (223,69) node [anchor=north west][inner sep=0.75pt]   [align=left] {$\displaystyle S^{8}$};
\draw (148,20) node [anchor=north west][inner sep=0.75pt]   [align=left] {$\displaystyle S^{9}$};
\end{tikzpicture}
\end{minipage}
\caption{A convex polyhedron partition of a nonconvex region (grey area) in a plane (left) and a junction tree of its corresponding combinatorial disjunctive constraint.}
\label{fig:obstacle_tree}
\end{figure}
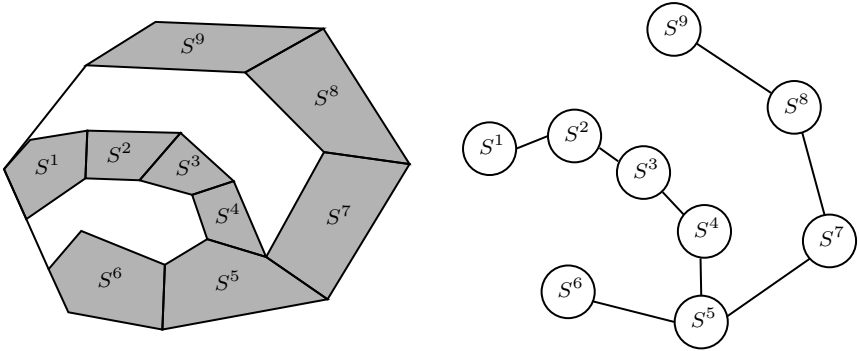

Since the region $\Omega \subset \mathbb{R}^2$ is bounded, each $P^i$ can be represented by the convex combination of its extreme points by the Minkowski-Weyl Theorem. Thus, the corresponding combinatorial disjunctive constraint is given by $\mathcal{S} := \{\ext(P^i)\}_{i=1}^d$ and $J := \cup_{S \in \mathcal{S}} S$. As shown in Figure~\ref{fig:obstacle_tree}, we can see that many corresponding CDCs of obstacle-free regions $\Omega$'s admit junction trees: when $\Omega$ can be partitioned into a ``path" or ``tree" of convex regions. In Figure~\ref{fig:obstacle_tree}, $S^4$ and $S^7$ share an extreme point but are not connected directly. It is easy to check that a path: $S^4 S^5 S^7$ implies the existence of a junction tree.

It is also worth noticing that CDCs admitting junction trees are pairwise IB-representable (Theorem~\ref{thm:IB_tree}). However, not every corresponding CDC of any partition of $\Omega$ is pairwise IB-presentable. For example, the corresponding CDC to the left of Figure~\ref{fig:obstacle_not_pairwise_IB} is not pairwise IB-representable. Since $\{v_1, v_2\} \subset S^1$, $\{v_1, v_3\} \subset S^2$, and $\{v_2, v_3\} \subset S^4$, all of $\{v_1, v_2\}, \{v_1, v_3\},$ and $\{v_2, v_3\}$ are feasible sets (Definition~\ref{def:feasible_sets}). However, $\{v_1, v_2, v_3\} \not\subseteq S^i$ for any $i \in \llbracket 8 \rrbracket$ and it is a minimal infeasible set. Thus, the corresponding CDC in Figure~\ref{fig:obstacle_not_pairwise_IB} is not pairwise IB-representable. However, if we replace $\{v_3, v_4\}$ with $\{v_{3, 2}, v_{4, 2}\}$ in $S^2$ and $\{v_3, v_5\}$ with $\{v_{3, 5}, v_{4, 5}\}$ in $S^5$, the new combinatorial disjunctive constraint admits a junction tree as shown on the right of Figure~\ref{fig:obstacle_not_pairwise_IB}. Therefore, it is pairwise IB-representable with only a cost of introducing 4 auxiliary continuous variables. Modeling the nonconvex region in Figure~\ref{fig:obstacle_not_pairwise_IB} could be an example to show that \eqref{form:cdc_tree} might be more computationally efficient than \eqref{form:cdc_vielma} and \eqref{form:cdc_tree_2} since fewer number of continuous variables are used. We will then study the number of continuous variables saved by \eqref{form:cdc_tree} comparing with \eqref{form:cdc_vielma} and \eqref{form:cdc_tree_2} in the setting of planar obstacle avoidance.

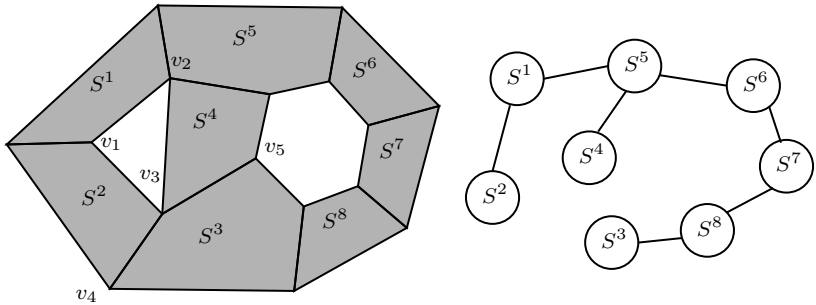
\begin{figure}[h]
    \centering
    \begin{minipage}{0.5\textwidth}
\tikzset{every picture/.style={line width=0.75pt}} 
\begin{tikzpicture}[x=0.75pt,y=0.75pt,yscale=-1,xscale=1,scale=0.9]
\draw  [fill={rgb, 255:red, 0; green, 0; blue, 0 }  ,fill opacity=0.3 ] (105.57,32.8) -- (112.35,73.44) -- (68.77,109.3) -- (21.8,110.5) -- cycle ;
\draw  [fill={rgb, 255:red, 0; green, 0; blue, 0 }  ,fill opacity=0.3 ] (108,149.35) -- (159.81,118.27) -- (186.44,145.17) -- (181.12,192.38) -- (78.94,191.19) -- cycle ;
\draw  [fill={rgb, 255:red, 0; green, 0; blue, 0 }  ,fill opacity=0.3 ] (68.77,109.3) -- (108,149.35) -- (78.94,191.19) -- (21.8,110.5) -- cycle ;
\draw  [fill={rgb, 255:red, 0; green, 0; blue, 0 }  ,fill opacity=0.3 ] (112.35,73.44) -- (167.56,82.41) -- (159.81,118.27) -- (108,149.35) -- cycle ;
\draw  [fill={rgb, 255:red, 0; green, 0; blue, 0 }  ,fill opacity=0.3 ] (112.35,73.44) -- (167.56,82.41) -- (200.49,75.24) -- (207.75,34) -- (105.57,32.8) -- cycle ;
\draw  [fill={rgb, 255:red, 0; green, 0; blue, 0 }  ,fill opacity=0.3 ] (261.5,88.98) -- (222.28,99.74) -- (200.49,75.24) -- (207.75,34) -- cycle ;
\draw  [fill={rgb, 255:red, 0; green, 0; blue, 0 }  ,fill opacity=0.3 ] (216.47,133.81) -- (243.58,157.12) -- (181.12,192.38) -- (186.44,145.17) -- cycle ;
\draw  [fill={rgb, 255:red, 0; green, 0; blue, 0 }  ,fill opacity=0.3 ] (261.5,88.98) -- (243.58,157.12) -- (216.47,133.81) -- (222.28,99.74) -- cycle ;

\draw (65.96,68.26) node [anchor=north west][inner sep=0.75pt]   [align=left] {$\displaystyle S^{1}$};
\draw (61.43,131.77) node [anchor=north west][inner sep=0.75pt]   [align=left] {$\displaystyle S^{2}$};
\draw (126.3,153.14) node [anchor=north west][inner sep=0.75pt]   [align=left] {$\displaystyle S^{3}$};
\draw (72,105) node [anchor=north west][inner sep=0.75pt]  [font=\footnotesize] [align=left] {$\displaystyle v_{1}$};
\draw (111,60) node [anchor=north west][inner sep=0.75pt]  [font=\footnotesize] [align=left] {$\displaystyle v_{2}$};
\draw (93.67,123.21) node [anchor=north west][inner sep=0.75pt]  [font=\footnotesize] [align=left] {$\displaystyle v_{3}$};
\draw (58.48,190.3) node [anchor=north west][inner sep=0.75pt]  [font=\footnotesize] [align=left] {$\displaystyle v_{4}$};
\draw (123,89.18) node [anchor=north west][inner sep=0.75pt]   [align=left] {$\displaystyle S^{4}$};
\draw (144.99,42.56) node [anchor=north west][inner sep=0.75pt]   [align=left] {$\displaystyle S^{5}$};
\draw (211.52,60.49) node [anchor=north west][inner sep=0.75pt]   [align=left] {$\displaystyle S^{6}$};
\draw (226.58,105.92) node [anchor=north west][inner sep=0.75pt]   [align=left] {$\displaystyle S^{7}$};
\draw (195.02,144.77) node [anchor=north west][inner sep=0.75pt]   [align=left] {$\displaystyle S^{8}$};
\draw (163,108) node [anchor=north west][inner sep=0.75pt]  [font=\footnotesize] [align=left] {$\displaystyle v_{5}$};
\end{tikzpicture}
    \end{minipage}
    \begin{minipage}{0.49\textwidth}
\tikzset{every picture/.style={line width=0.75pt}} 
\begin{tikzpicture}[x=0.75pt,y=0.75pt,yscale=-1,xscale=1,scale=0.8]

\draw   (66.87,73.57) .. controls (66.87,64.42) and (74.28,57) .. (83.43,57) .. controls (92.58,57) and (100,64.42) .. (100,73.57) .. controls (100,82.72) and (92.58,90.13) .. (83.43,90.13) .. controls (74.28,90.13) and (66.87,82.72) .. (66.87,73.57) -- cycle ;
\draw   (51.53,148.23) .. controls (51.53,139.08) and (58.95,131.67) .. (68.1,131.67) .. controls (77.25,131.67) and (84.67,139.08) .. (84.67,148.23) .. controls (84.67,157.38) and (77.25,164.8) .. (68.1,164.8) .. controls (58.95,164.8) and (51.53,157.38) .. (51.53,148.23) -- cycle ;
\draw   (125.87,176.57) .. controls (125.87,167.42) and (133.28,160) .. (142.43,160) .. controls (151.58,160) and (159,167.42) .. (159,176.57) .. controls (159,185.72) and (151.58,193.13) .. (142.43,193.13) .. controls (133.28,193.13) and (125.87,185.72) .. (125.87,176.57) -- cycle ;
\draw   (111.87,123.23) .. controls (111.87,114.08) and (119.28,106.67) .. (128.43,106.67) .. controls (137.58,106.67) and (145,114.08) .. (145,123.23) .. controls (145,132.38) and (137.58,139.8) .. (128.43,139.8) .. controls (119.28,139.8) and (111.87,132.38) .. (111.87,123.23) -- cycle ;
\draw   (140.2,65.57) .. controls (140.2,56.42) and (147.62,49) .. (156.77,49) .. controls (165.92,49) and (173.33,56.42) .. (173.33,65.57) .. controls (173.33,74.72) and (165.92,82.13) .. (156.77,82.13) .. controls (147.62,82.13) and (140.2,74.72) .. (140.2,65.57) -- cycle ;
\draw   (214.2,77.9) .. controls (214.2,68.75) and (221.62,61.33) .. (230.77,61.33) .. controls (239.92,61.33) and (247.33,68.75) .. (247.33,77.9) .. controls (247.33,87.05) and (239.92,94.47) .. (230.77,94.47) .. controls (221.62,94.47) and (214.2,87.05) .. (214.2,77.9) -- cycle ;
\draw   (234.87,128.57) .. controls (234.87,119.42) and (242.28,112) .. (251.43,112) .. controls (260.58,112) and (268,119.42) .. (268,128.57) .. controls (268,137.72) and (260.58,145.13) .. (251.43,145.13) .. controls (242.28,145.13) and (234.87,137.72) .. (234.87,128.57) -- cycle ;
\draw   (185.53,168.9) .. controls (185.53,159.75) and (192.95,152.33) .. (202.1,152.33) .. controls (211.25,152.33) and (218.67,159.75) .. (218.67,168.9) .. controls (218.67,178.05) and (211.25,185.47) .. (202.1,185.47) .. controls (192.95,185.47) and (185.53,178.05) .. (185.53,168.9) -- cycle ;
\draw    (100,73.57) -- (140.2,65.57) ;
\draw    (79,90.25) -- (68.1,131.67) ;
\draw    (134,106.75) -- (151.5,81.25) ;
\draw    (172,71.25) -- (214.2,77.9) ;
\draw    (248,113.75) -- (240.5,90.75) ;
\draw    (214,157.75) -- (243,142.25) ;
\draw    (159,176.57) -- (186,173.75) ;

\draw (75,63) node [anchor=north west][inner sep=0.75pt]   [align=left] {$\displaystyle S^{1}$};
\draw (59.67,137.67) node [anchor=north west][inner sep=0.75pt]   [align=left] {$\displaystyle S^{2}$};
\draw (134,166) node [anchor=north west][inner sep=0.75pt]   [align=left] {$\displaystyle S^{3}$};
\draw (120,112.67) node [anchor=north west][inner sep=0.75pt]   [align=left] {$\displaystyle S^{4}$};
\draw (148.33,55) node [anchor=north west][inner sep=0.75pt]   [align=left] {$\displaystyle S^{5}$};
\draw (222.33,67.33) node [anchor=north west][inner sep=0.75pt]   [align=left] {$\displaystyle S^{6}$};
\draw (243,118) node [anchor=north west][inner sep=0.75pt]   [align=left] {$\displaystyle S^{7}$};
\draw (193.67,158.33) node [anchor=north west][inner sep=0.75pt]   [align=left] {$\displaystyle S^{8}$};
\end{tikzpicture}
    \end{minipage}
    \caption{A convex polyhedron partition of a nonconvex region (grey area) in a plane (left) where its corresponding combinatorial disjunctive constraint is not pairwise IB-representable; A junction tree of modified version of CDC of the left where $\{v_3, v_4\}$ are replaced by $\{v_{3, 2}, v_{4, 2}\}$ in $S^2$ and $\{v_3, v_5\}$ are replaced by $\{v_{3, 5}, v_{4, 5}\}$ in $S^5$.}
    \label{fig:obstacle_not_pairwise_IB}
\end{figure}

We define the connectivity of polytope partitions and show that, given a connected polytope partitions of a bounded planar region $\Omega = \{P^i\}_{i=1}^d \subseteq \mathbb{R}^2$, the formulation~\eqref{form:cdc_tree} uses $2(d - 1)$ fewer continuous variables than \eqref{form:cdc_vielma} or \eqref{form:cdc_tree_2}.

\begin{definition}[Connectivity of Polytope Partitions]
Take bounded $\Omega \subseteq \mathbb{R}^2$ and a polytope partition $\{P^i\}_{i=1}^d$ of $\Omega$, i.e. $\relint(P^i) \cap \relint(P^j) = \emptyset$ and $\Omega = \bigcup_{i=1}^d P^i$. We can construct a dual graph $\mathcal{D}$ of the polytope partition with a vertex set of $\{P^i\}_{i=1}^d$ and $P^i$ is adjacent to $P^j$ if and only if $|P^i \cap P^j| = \infty$. We say the polytope partition $\{P^i\}_{i=1}^d$ is connected if $\mathcal{D}$ is connected.
\end{definition}

\begin{theorem} \label{thm:poly_part}
Given bounded $\Omega \subseteq \mathbb{R}^2$ and a connected polytope partition $\{P^i\}_{i=1}^d$ of $\Omega$, let $\mathcal{V} := \bigcup_{i=1}^d \ext(P^i)$ and $\mathcal{S} := \{S^i\}_{i=1}^d$ where $S^i = \mathcal{V} \cap P^i$. The formulation~\eqref{form:cdc_tree} for $\CDC(\mathcal{S})$ uses $2(d - 1)$ fewer continuous variables than \eqref{form:cdc_vielma} or \eqref{form:cdc_tree_2}.
\end{theorem}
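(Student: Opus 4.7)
The plan is to reduce the claim to evaluating $w(\mathcal{T}')$, where $\mathcal{T}'$ is the junction tree built by Algorithm~\ref{alg:build_CDC}. By Remark~\ref{rm:tree1}, after projecting $\lambda$ out, formulation~\eqref{form:cdc_tree} uses $\sum_{S \in \mathcal{S}} |S| - w(\mathcal{T}') - |J|$ continuous variables, whereas by Remark~\ref{rm:tree2} both \eqref{form:cdc_tree_2} and \eqref{form:cdc_vielma} use $\sum_{S \in \mathcal{S}} |S| - |J|$ continuous variables. Subtracting, the saving is exactly $w(\mathcal{T}')$, and since Algorithm~\ref{alg:build_CDC} obtains $\mathcal{T}'$ from a maximum spanning tree $\mathcal{T}$ of $\mathcal{K}_{\mathcal{S}}$ by relabeling vertices (preserving middle sets and weights), $w(\mathcal{T}') = w(\mathcal{T})$. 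So it suffices to show that the maximum spanning tree of $\mathcal{K}_{\mathcal{S}}$ has weight exactly $2(d-1)$.

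For the upper bound $w(\mathcal{T}) \leq 2(d-1)$, I would show every edge of $\mathcal{K}_{\mathcal{S}}$ has weight at most $2$. For distinct $P^i, P^j$ in the partition, $S^i \cap S^j = \mathcal{V} \cap P^i \cap P^j$ lies in the common boundary of two convex polygons with disjoint relative interiors. Under the standard face-to-face assumption on planar polytope partitions, $P^i \cap P^j$ is either empty, a single common vertex, or a single common edge, so $|S^i \cap S^j| \leq 2$; any third shared point, together with two others, would have its convex hull of positive area lying in $P^i \cap P^j$, contradicting $\relint(P^i) \cap \relint(P^j) = \emptyset$, and three collinear extreme points of a convex polygon are impossible. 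Summed over the $d-1$ edges of a spanning tree this yields the upper bound.

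For the matching lower bound, I would exhibit a spanning tree of weight $2(d-1)$ directly from the dual graph $\mathcal{D}$. Each adjacency $P^i \sim P^j$ in $\mathcal{D}$ means $P^i \cap P^j$ contains a boundary segment; under face-to-face, its endpoints are vertices of both polytopes, so the corresponding edge of $\mathcal{K}_{\mathcal{S}}$ has weight exactly $2$. Since $\mathcal{D}$ is connected on $d$ vertices, it contains a spanning tree with $d-1$ edges, each of weight $2$ in $\mathcal{K}_{\mathcal{S}}$, giving total weight $2(d-1)$ and forcing the maximum spanning tree to have weight $2(d-1)$ as well. The main subtlety is that the bound $|S^i \cap S^j|\leq 2$ can fail when the partition is not face-to-face (for example, when a vertex of one polytope lies in the interior of an edge of another), which would inflate some edge weights; this can be handled either by taking face-to-face as a standing assumption or by first refining the partition to insert all hanging vertices, preserving $\Omega$ and the disjointness of relative interiors while reducing to the face-to-face setting.
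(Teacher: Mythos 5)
Your proposal follows the paper's proof essentially step for step: reduce the saving to $w(\mathcal{T}')=w(\mathcal{T})$ via Remarks~\ref{rm:tree1} and~\ref{rm:tree2}, get $w(\mathcal{T})\le 2(d-1)$ from $|S^i\cap S^j|\le 2$, and get the matching lower bound from a spanning tree of the connected dual graph $\mathcal{D}$, each of whose edges corresponds to a weight-$2$ edge of $\mathcal{K}_{\mathcal{S}}$. The only divergence is your face-to-face hedge, and it is unnecessary: $S^i\cap S^j=\mathcal{V}\cap P^i\cap P^j$ lies in the convex, interior-free set $P^i\cap P^j$ (a point or a segment); no point of $\mathcal{V}$ can sit in the relative interior of that segment, since a full-dimensional piece containing such a point would have interior points arbitrarily close to it and hence overlap the interior of $P^i$ or $P^j$; and each endpoint of the segment must be a vertex of $P^i$ or of $P^j$ (otherwise both bounding edges continue collinearly past it and the contact segment would extend), so it belongs to $\mathcal{V}$. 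Thus $|S^i\cap S^j|\le 2$ holds for any planar partition into full-dimensional convex polytopes, and every dual adjacency gives an edge of weight exactly $2$, so your claim that the bound can fail without face-to-face is mistaken. Your fallback of ``refining the partition to insert hanging vertices'' also does not quite parse: $\mathcal{V}$ and $S^i=\mathcal{V}\cap P^i$ are determined by the point sets $P^i$, so hanging vertices are already accounted for (this is precisely why the theorem uses $\mathcal{V}\cap P^i$ rather than $\ext(P^i)$), whereas genuinely subdividing pieces would change $d$ and hence the statement being proved.
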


\begin{proof}
Let $\mathcal{S}'$, $\alpha$, and $\mathcal{T}'$ be the output of Algorithm~\ref{alg:build_CDC} given $\CDC(\mathcal{S})$ and $\disjoint := \False$ as inputs. As mentioned in Remark~\ref{rm:tree2}, \eqref{form:cdc_vielma} or \eqref{form:cdc_tree_2} introduces $w(\mathcal{T}')$ more continuous variables than \eqref{form:cdc_tree}. Let $\mathcal{K}_{\mathcal{S}}$ be the intersection graph of the disjunctive constraint $\CDC(\mathcal{S})$ and $\mathcal{T}$ is a maximum spanning tree of $\mathcal{K}_{\mathcal{S}}$. Since $\mathcal{T}'$ is a tree by replacing all the vertices $S^i$ to $S'^{,i}$ in the tree $\mathcal{T}$, $w(\mathcal{T}') = w(\mathcal{T})$. Since $\{P^i\}_{i=1}^d$ is a polytope partition of $\Omega \subseteq \mathbb{R}^2$, $|S^i \cap S^j| \leq 2$ for any $i \neq j$. Hence, $w(\mathcal{T}) \leq 2 (d-1)$.

On the other hand, let $\mathcal{D}$ be the dual graph of $\{P^i\}_{i=1}^d$. Since $\{P^i\}_{i=1}^d$ is a connected polytope partition of $\Omega$, $\mathcal{D}$ is connected. Thus, there exists a spanning tree subgraph of $\mathcal{D}$ and its corresponding spanning tree of $\mathcal{K}_{\mathcal{S}}$ has a weight of $2(d-1)$. Since $\mathcal{T}$ is a maximum spanning tree of $\mathcal{K}_{\mathcal{S}}$, $w(\mathcal{T}) \geq 2 (d-1)$.
\qed\end{proof}

Note that $S^i$ might contain elements other than all extreme points of $P^i$. In computational geometry, partitioning the obstacle-free region $\Omega \in \mathbb{R}^2$ to a smallest number of triangles (polytopes with three extreme points) is one of the most well-studied polygon partition problems~\cite{de2000computational}. In this setting, our new ideal formulation~\eqref{form:cdc_tree} uses $d+2$ continuous variables whereas \eqref{form:cdc_vielma} or \eqref{form:cdc_tree_2} uses $3d$ continuous variables.

\begin{corollary} \label{cor:triangle_part}
Given the same setting as Theorem~\ref{thm:poly_part}, let $P^i$ be a polytope with three extreme points for all $i \in \llbracket d \rrbracket$. The formulation~\eqref{form:cdc_tree} uses $d+2$ continuous variables whereas \eqref{form:cdc_vielma} or \eqref{form:cdc_tree_2} uses $3d$ continuous variables.
\end{corollary}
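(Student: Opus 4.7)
The plan is to obtain this corollary as a direct arithmetic specialization of Theorem~\ref{thm:poly_part} to the triangular case. The first step is to pin down $\sum_{i=1}^d |S^i|$. Under the assumption that every $P^i$ has three extreme points and the triangulation is proper (i.e.\ no vertex of one triangle lies on the relative interior of an edge of another), we have $S^i = \mathcal{V} \cap P^i = \ext(P^i)$ and hence $|S^i|=3$ for all $i$, so $\sum_{i=1}^d |S^i| = 3d$.

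Next I would determine the continuous-variable count for \eqref{form:cdc_vielma} and \eqref{form:cdc_tree_2}. In \eqref{form:cdc_vielma}, after projecting out $\lambda$, the continuous variables are the $\gamma^S_v$ indexed by pairs $(S,v)$ with $v \in S$, so there are $\sum_{i=1}^d |S^i| = 3d$ of them. For \eqref{form:cdc_tree_2}, Algorithm~\ref{alg:build_CDC} executed with $\disjoint := \True$ produces pairwise-disjoint replicas $S''^{,i}$, whence $|J''| = \sum_{i=1}^d |S^i| = 3d$, and the formulation uses $|J''|$ continuous variables after projection.

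Finally, invoking Theorem~\ref{thm:poly_part}, the formulation~\eqref{form:cdc_tree} uses exactly $2(d-1)$ fewer continuous variables than either of the other two, giving $3d - 2(d-1) = d+2$, which is the claimed count.

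The one point requiring genuine care is the parenthetical remark preceding the corollary: $S^i$ may in principle contain $\mathcal{V}$-points beyond $\ext(P^i)$ whenever a vertex of an adjacent triangle lies on an edge of $P^i$, which would inflate $\sum_i |S^i|$ past $3d$. The natural resolution is to adopt the standard simplicial-complex triangulation convention alluded to in the computational-geometry remark in the paragraph preceding the corollary; I would state this convention explicitly at the start of the proof so that the identity $|S^i|=3$ is unambiguous, after which the arithmetic above closes the argument immediately.
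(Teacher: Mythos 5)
Your proof is correct and takes essentially the same route the paper intends for this corollary: specialize Theorem~\ref{thm:poly_part}, count $\sum_{i=1}^d |S^i| = 3d$ continuous variables for \eqref{form:cdc_vielma} and \eqref{form:cdc_tree_2}, and subtract the $2(d-1)$ savings to get $d+2$ for \eqref{form:cdc_tree}. Your explicit conforming-triangulation assumption (so that $S^i = \ext(P^i)$ and $|S^i| = 3$) is precisely the caveat the paper itself flags in the sentence preceding the corollary, so stating it is appropriate rather than a deviation.
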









\section*{Acknowledgements}

The authors would like to thank Hamidreza Validi for helpful discussion and comments.

\bibliography{mybibfile}

\newpage
\appendix

\section{Proof for Lemma~\ref{lm:bc_size}} \label{ap:lm_proof}

\begin{proof}
Since $k$ is a finite positive integer, then there exists an integer $b'$ such that $k \leq 2^{b'} + 1$. Thus, the total number of the bicliques is no larger than
\begin{subequations}
\begin{align}
    \sum_{i=0}^{b-1} \min\left\{2^i, \left\lceil \frac{k-1+2^{b-i-1}}{2^{b-i}} \right\rceil\right\} & \leq \sum_{i=0}^{b-1} \left\lceil \frac{k-1+2^{b-i-1}}{2^{b-i}} \right\rceil\\
    &= \sum_{i=1}^{b} \left\lceil \frac{k-1+2^{i-1}}{2^{i}} \right\rceil \label{eq:sosk_proof_1_1}\\
    &= b + \sum_{i=1}^{b} \left\lceil \frac{k-1-2^{i-1}}{2^{i}} \right\rceil \\
    &\leq b + \sum_{i=1}^{b'} \left\lceil \frac{k-1-2^{i-1}}{2^{i}} \right\rceil \label{eq:sosk_proof_1_2}
\end{align}
\end{subequations}

We switch the value of $i$ to $b-i$ in \eqref{eq:sosk_proof_1_1}. In order to obtain~\eqref{eq:sosk_proof_1_2}, we prove it in two cases. If $b' \geq b$, since $\left\lceil \frac{k-1-2^{i-1}}{2^{i}} \right\rceil \geq \left\lceil -\frac{1}{2} \right\rceil$ for all positive integer $i$, it is trivial to show the correctness. If $b' < b$,
\begin{subequations}
\begin{align}
    \sum_{i=1}^{b} \left\lceil \frac{k-1-2^{i-1}}{2^{i}} \right\rceil &= \sum_{i=1}^{b'} \left\lceil \frac{k-1-2^{i-1}}{2^{i}} \right\rceil + \sum_{i=b'+1}^{b} \left\lceil \frac{k-1-2^{i-1}}{2^{i}} \right\rceil \\
    &\leq \sum_{i=1}^{b'} \left\lceil \frac{k-1-2^{i-1}}{2^{i}} \right\rceil + \sum_{i=b'+1}^{b} \left\lceil \frac{k-1}{2^{b'+1}} - \frac{1}{2} \right\rceil \\
    &= \sum_{i=1}^{b'} \left\lceil \frac{k-1-2^{i-1}}{2^{i}} \right\rceil \label{eq:sosk_proof_2_1},
\end{align}
\end{subequations}

\noindent where \eqref{eq:sosk_proof_2_1} is obtained by the fact that $k\leq 2^{b'} +1$. 

Since $k \leq 2^{b'} + 1$, then we can find $a_j \in \{0, 1\}$ for $j \in \{0, \hdots, b'-1\}$ such that $k-2 = \sum_{j=0}^{b'-1} a_j 2^j$. Then,
\begin{subequations}
\begin{align}
    \sum_{i=1}^{b'} \left\lceil \frac{k-1-2^{i-1}}{2^{i}} \right\rceil &= \sum_{i=1}^{b'} \left\lceil \frac{\left(\sum_{j=0}^{b'-1} a_j 2^j\right) + 1 -2^{i-1}}{2^{i}} \right\rceil \\
    &= \sum_{i=1}^{b'} \left\lceil \left(\sum_{j=0}^{b'-1} a_j 2^{j-i}\right) + 2^{-i} -\frac{1}{2} \right\rceil \\
    &= \sum_{i=1}^{b'-1} \sum_{j=i}^{b'-1} a_j 2^{j-i}+   \sum_{i=1}^{b'} \left\lceil \left(\sum_{j=0}^{i-1} a_j 2^{j-i}\right) + 2^{-i} -\frac{1}{2} \right\rceil \label{eq:sosk_proof_3_c}\\
    &= \sum_{i=1}^{b'-1} \sum_{j=i}^{b'-1} a_j 2^{j-i} + \sum_{i=1}^{b'}a_{i-1}. \label{eq:sosk_proof_3_d}
\end{align}
\end{subequations}
We take out all the integer parts to get~\eqref{eq:sosk_proof_3_c}. For~\eqref{eq:sosk_proof_3_d}, we can show the correctness by proving that $\left\lceil \left(\sum_{j=0}^{i-1} a_j 2^{j-i}\right) + 2^{-i} -\frac{1}{2} \right\rceil = a_{i-1}$. We prove it by cases. If $a_{i-1} = 0$,
\begin{align*}
    -\frac{1}{2} \leq \left(\sum_{j=0}^{i-1} a_j 2^{j-i}\right) + 2^{-i} -\frac{1}{2} \leq \sum_{j=0}^{i-2} 2^{j-i} + 2^{-i} - \frac{1}{2} = 0.
\end{align*}
If $a_{i-1} = 1$, then
\begin{align*}
    1 \geq \left(\sum_{j=0}^{i-1} a_j 2^{j-i}\right) + 2^{-i} -\frac{1}{2} \geq \frac{1}{2} + 2^{-i} - \frac{1}{2} = 2^{-i} > 0.
\end{align*}
Then, by reordering the summation,
\begin{subequations}
\begin{align}
    \sum_{i=1}^{b'-1} \sum_{j=i}^{b'-1} a_j 2^{j-i} + \sum_{i=1}^{b'}a_{i-1} &= \sum_{j=1}^{b'-1} a_j \sum_{i=1}^{j} 2^{j-i} + \sum_{j=0}^{b'-1}a_{j}\\
    &= a_0 + \sum_{j=1}^{b'-1}\left( a_j + a_j \sum_{i=0}^{j-1} 2^i\right) \label{eq:sosk_proof_4_a} \\
    &= a_0 + \sum_{j=1}^{b'-1}a_j 2^j\\
    &= k - 2.
\end{align}
\end{subequations}
\qed\end{proof}

\section{Proof for Propositions~\ref{prop:sosk_less} and~\ref{prop:sosk_proportion}} \label{ap:proof_sosk_number}

\begin{proof}
(Proof of Proposition~\ref{prop:sosk_less}) Given $N > k \geq 2$, we have
\begin{subequations}
\begin{align}
    \lceil \log_2(\lceil N / k \rceil - 1) \rceil + 3k &= \lceil \log_2(\lceil (N - k) / k \rceil) \rceil + 3k \label{eq:sosk_less_1}\\
    &\geq \lceil \log_2((N - k) / k) \rceil + 3k \label{eq:sosk_less_2} \\
    &= \lceil \log_2(N - k)  - \log_2 (k) \rceil + 3k \label{eq:sosk_less_3} \\
    &\geq \lceil \log_2(N - k)\rceil  - \lceil \log_2 (k) \rceil + 3k \label{eq:sosk_less_4} \\
    &\geq \lceil \log_2(N - k + 1)\rceil - 1 - \lceil \log_2 (k) \rceil + 3k \label{eq:sosk_less_5} \\
    &\geq \lceil \log_2(N - k + 1)\rceil - 1 + 2k \label{eq:sosk_less_6} \\
    &> \lceil \log_2(N - k + 1) \rceil + k - 2 \label{eq:sosk_less_7},
\end{align}
\end{subequations}
\noindent where \eqref{eq:sosk_less_4} is obtained by the fact that $\lceil x  - y \rceil \geq \lceil x  - \lceil y \rceil \rceil = \lceil x \rceil  - \lceil y \rceil$ for any $x, y$ and \eqref{eq:sosk_less_6} is because $\lceil \log_2(x)\rceil \geq \lceil \log_2(x + 1)\rceil - 1$ for any $x \geq 1$.

(Proof of Proposition~\ref{prop:sosk_proportion}) Furthermore, we suppose that $k > C \lceil \log_2(N) \rceil$ for some $C > \frac{1}{2}$. Then,

\begin{subequations}
\begin{align}
    \frac{\lceil \log_2(N - k + 1) \rceil + k - 2}{\lceil \log_2(\lceil N / k \rceil - 1) \rceil + 3k} &\leq \frac{\lceil \log_2(N) \rceil + k}{3 k} \\
    &< \frac{\frac{k}{C} + k}{3 k} \\
    &= \frac{1 + C}{3 C} \\
    &< 1.
\end{align}
\end{subequations}
\qed \end{proof}

\end{document}